\documentclass[10pt,reqno]{amsart}

\usepackage{amsfonts,amssymb,amsmath,mathrsfs}
\usepackage{float}
\usepackage{stmaryrd}
\usepackage{srcltx}
\usepackage{epsfig}
\usepackage{graphicx}
\usepackage{caption}
\usepackage{color}
\usepackage{verbatim}
\usepackage{kotex}
\usepackage{hyperref}
%\usepackage[colorlinks,citecolor=blue,linkcolor=blue]{hyperref}

%%%%%%%%%%%%%%%%%%%%%%%%%%%%%%%%%%%%%%%%%%%%%%%%%%%%%%%%%%%%% table of contents
%\usepackage{tocvsec2}
%\def\subsectionname{\quad}%
%%%%%%%%%%%%%%%%%%%%%%%%%%%%%%%%%%%%%%%%%%%%%%%%%%%%%%%%%%%%%

%\usepackage[draft]{Mytheorems}
%\usepackage{Mytheorems}
\setlength{\unitlength}{1cm} \setlength{\topmargin}{0.1in}
\setlength{\textheight}{8.4in} \setlength{\textwidth}{6in}
\setlength{\oddsidemargin}{0.1in} \setlength{\evensidemargin}{0.1in}

\title[ ]{flocking formation and stabilizer of boosted cooperative control on  a sphere}

\author[Sun-Ho Choi]{Sun-Ho Choi}
\address[Sun-Ho Choi]{Department of Applied Mathematics and the Institute of Natural Sciences, Kyung Hee University, 1732 Deogyeong-daero, Giheung-gu, Yongin 17104, Republic of Korea}
\email{sunhochoi@khu.ac.kr}

\author[Dohyun Kwon]{Dohyun Kwon}
\address[Dohyun Kwon]{Department of Mathematics, University of Wisconsin-Madison, 480 Lincoln Dr., Madison, WI 53706, USA}
\email{dkwon7@wisc.edu}

\author[Hyowon Seo]{Hyowon Seo}
\address[Hyowon Seo]{Department of Applied Mathematics and the Institute of Natural Sciences, Kyung Hee University, 1732 Deogyeong-daero, Giheung-gu, Yongin 17104, Republic of Korea}
\email{hyowseo@gmail.com}

\begin{document}

\newtheorem{theorem}{Theorem} [section]
\newtheorem{maintheorem}{Theorem}
\newtheorem{lemma}[theorem]{Lemma}
\newtheorem{proposition}[theorem]{Proposition}
\newtheorem{remark}[theorem]{Remark}
\newtheorem{example}[theorem]{Example}
\newtheorem{exercise}{Exercise}
\newtheorem{definition}{Definition}[section]
\newtheorem{corollary}[theorem]{Corollary}

%%%%%%%%%%%%%%%%%%%%%%%%%%%%%%%%%%%%%

\newcommand{\noi}{\noindent}
\newcommand{\Z}{\mathbb{Z}}
\newcommand{\R}{\mathbb{R}}
\newcommand{\C}{\mathbb{C}}
\newcommand{\T}{\mathbb{T}}
\newcommand{\bul}{\bullet}
\newcommand{\E}{\mathcal{E}}
\newcommand{\N}{\mathcal{N}}
\newcommand{\RR}{\mathcal{R}}
\newcommand{\D}{\mathcal{D}}
\newcommand{\HH}{\mathcal{H}}

\newcommand{\al}{\alpha}
\newcommand{\dl}{\delta}
\newcommand{\Dl}{\Delta}
\newcommand{\eps}{\varepsilon}
\newcommand{\kk}{\kappa}
\newcommand{\g}{\gamma}
\newcommand{\G}{\Gamma}
\newcommand{\ld}{\lambda}
\newcommand{\lam}{\lambda}
\newcommand{\Ld}{\Lambda}
\newcommand{\s}{\sigma}
\newcommand{\ft}{\widehat}
\newcommand{\wt}{\widetilde}
\newcommand{\cj}{\overline}
\newcommand{\dx}{\partial_x}
\newcommand{\dt}{\partial_t}
\newcommand{\dd}{\partial}
\newcommand{\invft}[1]{\overset{\vee}{#1}}
\newcommand{\lrarrow}{\leftrightarrow}
\newcommand{\embeds}{\hookrightarrow}
\newcommand{\LRA}{\Longrightarrow}
\newcommand{\LLA}{\Longleftarrow}

\newcommand{\wto}{\rightharpoonup}

%Japanese Bracket
\newcommand{\jb}[1]
{\langle #1 \rangle}

%color
%\newcommand{\dk}[1]{{\color{blue} #1  }}
%\definecolor{caribbean green}{rgb}{0.0, 0.8, 0.6}
\newcommand{\dk}[1]{{\color{caribbean green}[#1]}}

%%%%%%%%%%%%%%%%%%%%%%%%%%%%%%%%%%

\renewcommand{\theequation}{\thesection.\arabic{equation}}
\renewcommand{\thetheorem}{\thesection.\arabic{theorem}}
\renewcommand{\thelemma}{\thesection.\arabic{lemma}}
\newcommand{\bbr}{\mathbb R}
\newcommand{\bbz}{\mathbb Z}
\newcommand{\bbn}{\mathbb N}
\newcommand{\bbs}{\mathbb S}
\newcommand{\bbp}{\mathbb P}
\newcommand{\ddiv}{\textrm{div}}
\newcommand{\bn}{\bf n}
\newcommand{\rr}[1]{\rho_{{#1}}}
\newcommand{\thh}{\theta}
\def\charf {\mbox{{\text 1}\kern-.24em {\text l}}}
\renewcommand{\arraystretch}{1.5}

\thanks{
%\textbf{Acknowledgment.}
}

\begin{abstract}
The purpose of this paper is to consider flocking formations of a second order dynamic system on a sphere with a Cucker-Smale type flocking operator and cooperative control. The flocking operator consists of a weighted control parameter and a natural relative velocity. The cooperative control law is given by a combination of attractive and repulsive forces. We prove that the solution to this system converges to an asymptotic spherical configuration depending on the control parameters of the cooperative control law. All possible asymptotic configurations for this system are classified and sufficient conditions for rendezvous, deployment, and local deployment, respectively, are presented. In addition, we provide several numerical simulations to confirm our analytic results. We numerically verify that the solution to this system converges to a formation flight with a nonzero constant speed if we add a boost term. The flocking operator acts as a stabilizer on the spherical cooperative control laws.
\end{abstract}

%The cooperative control law with a damping force allows the agents on sphere to have several asymptotic spherical configurations. Without a damping force, the solution to this system also has similar asymptotic spherical configurations.

\maketitle

%\tableofcontents

%%%%%%%%%%%%%%%%%%%%%%%%%%%%%%%%%%%%%%%%%%%%%%%%%%%%%%%%%%%%%%%%%%%%%%%%%%%%%%%%%%%%%%%%%%%%%%%%%%%%%%%
%
%  Section
%
%%%%%%%%%%%%%%%%%%%%%%%%%%%%%%%%%%%%%%%%%%%%%%%%%%%%%%%%%%%%%%%%%%%%%%%%%%%%%%%%%%%%%%%%%%%%%%%%%%%%%%
\section{Introduction}\label{sec1}
\setcounter{equation}{0}
Simultaneous control of multiple agents attracts much attention of researchers \cite{G-C-F,K-R,M}. With the recent development of hardware, it is necessary to consider a multi-agent control system on the entire Earth's surface \cite{B-M, L-S,M-G,S-S-Z-T}, not just a local range.  The spherical surface has a nonzero curvature rather than a flat space, which yields mathematical challenges. The control system using cooperative manipulation on various manifolds is considered in many studies, such as on a sphere \cite{L2, P-D,T-M-B-G}, Stiefel manifold \cite{M-T-G}, and directed topology \cite{S-M-Z-H-H,Z-X-H-M-T}. In particular, the controllers or algorithms for the uniform deployment of homoclinic agents are investigated using single-integrator model \cite{La-S,P-P} or double-integrator model \cite{L-C}. In \cite{La-S}, the exponential asymptotic stability is shown  using invariant manifold techniques for the single-integrator model. After the pioneering work in \cite{O}, the double-integrator multi-agent system of control has been  extensively studied. However, as the authors in \cite{S} mentioned, the complete characterization of global convergence of the double-integrator system is unsolved and there are still many open problems. The global convergence property over undirected graphs on the $n$-sphere is studied in \cite{M-G,M-T-G2}. The convergence rate problem over topology networks is investigated in \cite{L-C} with  the convergence condition for consensus.

%It is difficult to stably control multiple agents simultaneously \cite{K-R,M}.

The collective motion of the multi-agent system, including flocking, is a phenomenon frequently observed in nature \cite{Alt, F-E,H-C, S-H}. Recently, many researchers have studied  this phenomenon  because of its wide applicability. Related topics are also being used in various fields \cite{C-F-R-T, D-M1,Kuromoto,T-T,T-B,V-C-B-C-S,Wi}. In particular, the Cucker-Smale model is often studied due to its high utilization, simple structure, and various mathematical phenomena. The basic Cucker-Smale model is the second-order system of ordinary differential  equations (ODEs)  describing the ensemble of multiple agents in  $\bbr^d$, and the control law is composed of the sum of the following weighted internal relaxation forces \cite{C-S2}:
\begin{align*}
\begin{aligned}
\frac{dx_i}{dt} &= v_i, \\
\frac{dv_i}{dt} &= \sum_{j=1}^{N} \frac{\psi_{ij}}{N}(v_j - v_i),
\end{aligned}
\end{align*}
where $x_i$ and $v_i$  are the position  and velocity in $\bbr^d$ of the $i$th agent, respectively, and  $\psi_{ij}$ is the communication rate between the $i$th and $j$th agents.

Recently, not only collective motion in the flat space, but also this phenomenon in various manifolds has been attracting attention from researchers.  In particular, the dynamics on a spherical surface are closely related to our reality. For example, if we consider the movement of migratory birds, the movement of each agent is close to the movement on a spherical surface rather than a flat space of $\bbr^n$.  From an engineering point of view, in recent years, battery performance has rapidly improved, and with the development of an aerial vehicle using sunlight, the movement of unmanned aerial vehicles is approaching a global scale \cite{R, S-W-X}. With the development of hardware technologies, the need to consider the entire Earth as a spatial background has increased.

%Therefore, researches on controlling many agents simultaneously in spherical space will receive a lot of attention.

In this paper, we deal with the second-order system of ODEs on $\mathbb{S}^2$ to achieve a formation flight that maintains the desired pattern while multiple agents fly at a constant speed on a spherical surface.
\begin{align}
\begin{aligned}\label{main}
\dot{x}_i&=v_i,\quad i =1, \ldots, N,\\
\dot{v}_i&=-\frac{\|v_i\|^2}{\|x_i\|^2}x_i+\sum_{j=1}^N\frac{\psi_{ij}}{N}\big(R_{x_j \shortrightarrow x_i}(v_j)-v_i\big)+\sum_{j=1,j\ne i}^N \frac{\sigma_{ij} }{N}(\|x_i\|^2x_j - \langle x_i,x_j \rangle  x_i),
\end{aligned}
\end{align}
where   $x_i$ and $v_i$ are the position and velocity of the $i$th agent located on  $\mathbb{S}^2$ and the flocking operator  on  $\mathbb{S}^2$ is expressed by the relative velocity $R_{x_j \shortrightarrow x_i}(v_j)-v_i$.  The rotation operator $R_{x_1\shortrightarrow x_2}(y)$, introduced in  \cite{C-K-S}, is given by
\begin{align*}
%\label{rot op}
R_{x_1\shortrightarrow x_2}(y)=R(x_1, x_2)\cdot y,
\end{align*}
where $y$ is a column vector. For  column vectors $x_1,x_2\in \mathbb{S}^2$  with $x_1\ne -x_2$, $R(x_1,x_2)$ is the following $3\times 3$ matrix.
  \begin{align}\nonumber R(x_1,x_2) :=\left\{ \begin{aligned}
  &\langle x_1 , x_2\rangle I - x_1 x_2^T + x_2 x_1^T + (1-  \langle x_1, x_2\rangle) \Big( \frac{x_1 \times x_2}{|x_1 \times x_2|} \Big) \Big( \frac{x_1 \times x_2}{|x_1 \times x_2|} \Big)^T,\quad&\mbox{if}\quad x_1\ne x_2,\\
  &I,\quad&\mbox{if}\quad x_1=x_2,
  \end{aligned}
  \right.\end{align}
where $I$ is the identity matrix in $\mathbb{R}^3$ and $M^T$ is the transpose of a matrix $M$.

The last term in the right-hand side of \eqref{main} is the cooperative control law on  $\mathbb{S}^2$ that is expressed as an appropriate combination of well-known attractive and repulsive forces between $i$th and $j$th nodes  based on the position of the agents \cite{L-S,PKH10} and $\sigma_{ij}$ is the  inter-particle force parameter between $i$th and $j$th agents given by
\begin{align}\label{sigma}
\sigma_{ij}=\sigma(\|x_i-x_j\|^2)
\end{align}
and
\begin{align}\label{sigma_def}
\sigma(x)=\sigma_a-\frac{\sigma_r}{x}, \quad\mbox{for nonnegative real numbers}~ \sigma_a,\sigma_r\geq 0.
\end{align}
Unlike \cite{L-S}, our model includes the flocking operator instead of the damping term. Therefore, we can add a boost term to our model for the formation flight as in \eqref{eqb}. See Section \ref{sec4}. We further assume that  $\psi_{ij}$ is the communication rate between $i$th and $j$th nodes and
\begin{align}
\label{eqn:psi_ij}
\psi_{ij}=\psi(\|x_i - x_j\|),\end{align}
and it satisfies
\begin{align}
\label{eqn:psi}
\mbox{$\psi$ is a nonnegative decreasing $C^1$ function  on $[0,2]$ with $\psi(2) = 0$ and  $\psi'(2)<0$.}
\end{align}
Throughout this paper we suppose the following admissible initial data conditions: for all $i\in \{1,\ldots,N\}$,
\begin{align}
\label{eqn:ini}\langle x_i(0),x_i(0)\rangle=1 \quad \mbox{and}\quad \langle x_i(0),v_i(0)\rangle=0. \end{align}
From the flocking operator on our system, we expect that the solution to \eqref{main} has a velocity alignment property. See \cite{C-K-S,C-K-S2} for the motivation and the detailed properties of the above operators.

%To define the flocking operator on $\mathbb{S}^2$, we consider  agents on a unit sphere. Let $v_i$ and $v_j$ be velocities of $i$th and $j$th agents, respectively.

Our system consists of the cooperative control law and  Cucker-Smale type flocking operator. The cooperative control term is given by an appropriate linear combination of attractive and repulsive forces between the agents in the ensemble. It is known by \cite{L-S} that the cooperative control law combining the damping term can realize steady-state formations on a spherical surface. Due to the damping term, the ensemble of all agents is stabilized, and one can obtain a stationary formation on a sphere. In our case, to implement the formation flight with a nonzero constant speed, one may replace the damping term with the boost term. However, the control design without the damping term does not produce a robust pattern and creates a chaotic ensemble. We observe that the movement of the ensemble becomes very unstable. See Figures \ref{fig0} and \ref{fig11} in Section \ref{sec4} for more details and numerical simulations.

Using flocking and boost terms, we design a control law to obtain the desired stable formation flight on a spherical surface with a nonzero constant speed. Here, the flocking operator acts as a stabilizer for this boost term so that agents in the ensemble stably maintain their formation and fly  at a predetermined constant speed. By changing the cooperative control term like \cite{L-S}, various formations can be formed. In what follows, we completely classify these types of limit cycles in the maximal invariant set of this system and provide a rigorous mathematical proof for the corresponding convergence result. Due to the geometric constraint of the spherical surface and the special properties of the flocking operator, the interaction between the cooperative control and the flocking operator allows the agents to converge to various pattern formations even without the damping term.

\begin{definition}
\cite{C-K-S}
If the solution $\{(x_i(t),v_i(t))\}_{i=1}^N $ to \eqref{main} subject to the admissible initial condition in  \eqref{eqn:ini} satisfies the following condition:\begin{align}\nonumber
\max_{1\leq i,j\leq N} \|x_i(t) + x_j(t)\| \|  R _{x_j(t)\shortrightarrow x_i(t)}(v_j(t)) - v_i(t) \| = 0,
\end{align}
then  we say that the system has velocity alignment at $t\geq 0$.

Similarly, we define asymptotic velocity alignment at $t=\infty$, if
\begin{align}\nonumber
\lim_{t\to \infty}\max_{1\leq i,j\leq N} \|x_i(t) + x_j(t)\| \|  R _{x_j(t)\shortrightarrow x_i(t)}(v_j(t)) - v_i(t) \| = 0.
\end{align}
\end{definition}

Additionally, by the geometrically constrained control law in \eqref{main}, we consider the following three types of asymptotic behaviors.

\begin{definition}Let $\{(x_i(t),v_i(t))\}_{i=1}^N $ be the solution to \eqref{main}.
\begin{enumerate}
\item
The ensemble $\{(x_i(t),v_i(t))\}_{i=1}^N $ has an asymptotic rendezvous, if
\[\lim_{t\to\infty }\max_{1\leq i,j\leq N}\|x_i(t)-x_j(t)\|=0.\]
\item
The ensemble $\{(x_i(t),v_i(t))\}_{i=1}^N $ has an asymptotic formation configuration, if
\[\lim_{t\to \infty}\rho(t)=r_0>0.\]
\item
The ensemble $\{(x_i(t),v_i(t))\}_{i=1}^N $  has an asymptotic uniform deployment, if
\[\lim_{t\to\infty }\bar{x}(t)=0,\]
where $\bar{x}$ is the centroid of the agents and  $\rho$ is the configuration measurement  given by
\[\bar{x}(t)=\frac{1}{N}\sum_{i=1}^N x_i(t),\]
 and\[\rho(t)=\sum_{i=1}^N\left\|x_i(t)-\frac{\langle \bar{x},x_i \rangle}{\|\bar{x}\|^2}\bar{x}(t)\right\|^2,\quad \mbox{ if
}~\|\bar{x}(t)\|\ne 0.\]
\end{enumerate}
\end{definition}

\begin{remark}
\begin{enumerate}
\item If we consider an axis  parallel  to $\bar{x}$ passing through to the origin, then $\rho$ is the total sum of squared distances between $x_i$ and the axis.

\item The structure of $\sigma(\cdot)$ can be used to obtain a pattern scale measurement  $\rho(\cdot)$ as in \cite{L-S}.

\end{enumerate}

\end{remark}

 \begin{definition}For a given phase state vectors $x_i\in\bbs^2$ and $v_i\in T_{x_i}\bbs^2\subset \bbr^3$ with $\langle x_i,v_i \rangle=0$, we let
  \[X=\{x_i\}_{i=1}^N, \quad V=\{v_i\}_{i=1}^N,\quad \mbox{and}\quad Z=\{(x_i,v_i)\}_{i=1}^N.\]
  We define  an energy functional $\E(Z)$ by
\begin{align*}%\label{eqn:e}
\E(Z)&= \E_K(V) + \E_C(X)-\E_C^{min}, \end{align*}
where
\begin{align*}
\E_K(V)&:= \frac{1}{2N}\sum_{i=1}^N\|v_i\|^2,
\\
 \E_C(X)& :=  \frac{\sigma_a}{4N^2 } \sum_{i,j=1}^N \|x_i-x_j\|^2-\frac{\sigma_r}{4N^2 } \log\prod_{i,j=1, i\ne j}^N\|x_i-x_j\|^2,
\end{align*}
%\frac{1}{4N^2 } \sum_{i,j=1, j\ne i}^N \int_{x_0}^{\|x_i-x_j\|^2}\sigma(x)dx\\
and
\[\E_C^{min}= \min_{ X\in (\bbs^2)^N}\E_C(X).\]

 \end{definition}

We next present our main analytic results in this paper.
\begin{maintheorem}\label{thm:1}
Assume that $\psi_{ij}$ satisfies  \eqref{eqn:psi_ij}-\eqref{eqn:psi} and the initial data $\{(x_i(0),v_i(0))\}_{i=1}^N$ satisfy the admissible initial data conditions in \eqref{eqn:ini}. If $\sigma_r= 0$, then there exists a unique global solution  $\{(x_i(t),v_i(t))\}_{i=1}^N $ to \eqref{main}.

If $\sigma_r > 0$ and the initial data satisfy \eqref{eqn:ini} and    $x_i(0)\ne x_j(0)$ for any $1\leq i\ne j\leq N$, then there exists a unique global solution  $\{(x_i(t),v_i(t))\}_{i=1}^N $ to \eqref{main} and for any $t>0$ and $1\leq i\ne j\leq N$,
 \[x_i(t)\ne x_j(t).\]
Moreover, for any initial data, the ensemble has velocity alignment at $t=\infty$.
\end{maintheorem}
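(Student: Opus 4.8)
The plan is to establish the statement through four pillars, carried out in order: (i) local well-posedness, (ii) invariance of the admissible manifold \eqref{eqn:ini}, (iii) an energy dissipation identity for $\E(Z)$, and (iv) a Barbalat-type argument upgrading dissipation to asymptotic velocity alignment; global existence will fall out of (iii). First I would record that the right-hand side of \eqref{main} is locally Lipschitz on the open set where $\|x_i\|\ne 0$, $x_i\ne -x_j$, and, when $\sigma_r>0$, $x_i\ne x_j$. The only delicate point is the flocking term near an antipodal pair $x_i=-x_j$, where $R_{x_j\shortrightarrow x_i}$ is singular; there I would use that $R_{x_j\shortrightarrow x_i}$ is an isometry (hence bounded) together with the first-order vanishing of $\psi_{ij}=\psi(\|x_i-x_j\|)$ at $\|x_i-x_j\|=2$ from \eqref{eqn:psi} to see that $\psi_{ij}(R_{x_j\shortrightarrow x_i}(v_j)-v_i)$ extends as a Lipschitz field across antipodal configurations, in the spirit of \cite{C-K-S}. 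Cauchy--Lipschitz then gives a unique local solution. For invariance I would differentiate the two admissible quantities: $\frac{d}{dt}\|x_i\|^2=2\langle x_i,v_i\rangle$ and $\frac{d}{dt}\langle x_i,v_i\rangle=\|v_i\|^2+\langle x_i,\dot v_i\rangle$, and note that the cooperative term is orthogonal to $x_i$ by construction, that $R_{x_j\shortrightarrow x_i}(v_j)\in T_{x_i}\bbs^2$, and that the centripetal term contributes exactly $-\|v_i\|^2$, so the set $\{\|x_i\|=1,\ \langle x_i,v_i\rangle=0\}$ is invariant and \eqref{eqn:ini} propagates.

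The crux is the dissipation identity. I would compute $\frac{d}{dt}\E(Z)$ on the admissible manifold and show that the cooperative-control contribution to $\frac{d}{dt}\E_K$ cancels $\frac{d}{dt}\E_C$ exactly, since after using the constraints both equal $\pm\frac{1}{N^2}\sum_{i\ne j}\sigma_{ij}\langle x_j,v_i\rangle$ (the attractive and logarithmic-repulsive parts of $\E_C$ assemble into precisely $\sigma_{ij}$ by \eqref{sigma_def}). Only the flocking contribution survives, and using that $R_{x_j\shortrightarrow x_i}$ is orthogonal with $R_{x_j\shortrightarrow x_i}^{-1}=R_{x_i\shortrightarrow x_j}$, together with $\psi_{ij}=\psi_{ji}$, I would symmetrize in $(i,j)$ to reach the clean identity
\[
\frac{d}{dt}\E(Z)=-\frac{1}{2N^2}\sum_{i,j=1}^N \psi_{ij}\,\big\|v_i-R_{x_j\shortrightarrow x_i}(v_j)\big\|^2\le 0 .
\]

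Monotonicity then yields $\E(Z(t))\le\E(Z(0))$, hence uniform bounds on $\E_K$ (so each $\|v_i\|$ is bounded) and on $\E_C$. When $\sigma_r=0$ the phase space is the compact constraint manifold with bounded velocities, so the local solution extends globally. When $\sigma_r>0$, the upper bound on $\E_C$ combined with $\|x_i-x_j\|\le 2$ forces $\prod_{i\ne j}\|x_i-x_j\|^2$ to remain bounded below by a positive constant, which gives a uniform separation $\|x_i(t)-x_j(t)\|\ge\delta>0$; this keeps every $\sigma_{ij}$ bounded, holds the trajectory away from all singularities, and hence delivers global existence together with $x_i(t)\ne x_j(t)$ for all $t>0$. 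In both cases positions, velocities, and accelerations are uniformly bounded, and integrating the identity above gives $\int_0^\infty\sum_{i,j}\psi_{ij}\|v_i-R_{x_j\shortrightarrow x_i}(v_j)\|^2\,dt<\infty$.

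To finish I would invoke Barbalat's lemma: the integrand is uniformly continuous (its derivative is bounded by the uniform bounds just established), so it tends to $0$ as $t\to\infty$. The remaining step is to convert this $\psi$-weighted decay into alignment in the stated $\|x_i+x_j\|$-weighted sense, and this is where the hypothesis $\psi'(2)<0$ is essential. On the unit sphere $\|x_i+x_j\|^2=4-\|x_i-x_j\|^2$, so writing $s=\|x_i-x_j\|$ the ratio $\psi(s)/(4-s^2)$ is positive on $[0,2)$ and, by L'Hôpital and \eqref{eqn:psi}, extends continuously to $s=2$ with value $-\psi'(2)/4>0$; being continuous and strictly positive on the compact interval $[0,2]$ it is bounded below by some $c>0$, whence $\psi(\|x_i-x_j\|)\ge c\,\|x_i+x_j\|^2$. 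Therefore
\[
\max_{1\le i,j\le N}\|x_i+x_j\|^2\,\big\|R_{x_j\shortrightarrow x_i}(v_j)-v_i\big\|^2\le\frac{1}{c}\sum_{i,j=1}^N \psi_{ij}\,\big\|v_i-R_{x_j\shortrightarrow x_i}(v_j)\big\|^2\longrightarrow 0,
\]
which is exactly asymptotic velocity alignment. I expect the main obstacles to be the Lipschitz regularity of the flocking term across antipodal configurations in the first step and, above all, the comparison $\psi_{ij}\gtrsim\|x_i+x_j\|^2$: it is precisely $\psi'(2)<0$ that makes the $\psi$-weighted dissipation strong enough to dominate the $\|x_i+x_j\|$-weighted alignment functional, and without it the two weights would be incomparable near antipodal pairs.
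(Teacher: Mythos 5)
Your proposal is correct, and on the well-posedness part it is essentially the paper's own argument: local existence via the Lipschitz-regularized flocking field (the paper's Lemma \ref{lem:con}, imported from \cite{C-K-S}), propagation of the constraints $\|x_i\|=1$ and $\langle x_i,v_i\rangle=0$, the dissipation identity of Proposition \ref{prop 2.4}, and global continuation from the bound on $\E_K$ together with, when $\sigma_r>0$, the uniform separation $\|x_i-x_j\|\ge\delta>0$ extracted from the logarithmic term in $\E_C$. Where you genuinely diverge is the last claim, asymptotic velocity alignment. The paper does not prove it inside the proof of Theorem \ref{thm:1} at all; it obtains it in Section \ref{sec3} as a consequence of LaSalle's invariance principle (Theorem \ref{LITHM}, Proposition \ref{prop 3.2}): the $\omega$-limit set lies in the set where the dissipation vanishes, and there each pair is either antipodal (so the weight $\|x_i+x_j\|$ vanishes) or velocity aligned. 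You instead integrate the dissipation identity, invoke Barbalat's lemma, and convert the $\psi$-weighted decay into the $\|x_i+x_j\|$-weighted statement through the pointwise comparison $\psi(\|x_i-x_j\|)\ge c\,\|x_i+x_j\|^2$, derived from $\psi(2)=0$, $\psi'(2)<0$ and the spherical identity $\|x_i+x_j\|^2=4-\|x_i-x_j\|^2$. This is a more quantitative and self-contained route: it avoids the invariance-set machinery and makes completely explicit where the hypothesis $\psi'(2)<0$ enters, whereas the paper's LaSalle route is softer but feeds directly into the classification arguments of Theorem \ref{thm:2}.

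One step of yours needs tightening. In Barbalat's lemma you assert the integrand is uniformly continuous because ``its derivative is bounded by the uniform bounds just established,'' but the time derivative of $R_{x_j \shortrightarrow x_i}$ blows up like $1/\|x_i+x_j\|$ near antipodal pairs, so boundedness of positions, velocities and accelerations alone does not give a bounded derivative of the dissipation. The repair uses the same mechanism you already invoked for local existence: on the constraint manifold $\psi_{ij}R_{x_j \shortrightarrow x_i}(v_j)=T(x_i,x_j,v_j)$ with $T$ locally Lipschitz (Lemma \ref{lem:con}), and since $R_{x_j \shortrightarrow x_i}$ is orthogonal one can write $\psi_{ij}\|R_{x_j \shortrightarrow x_i}(v_j)-v_i\|^2=\psi_{ij}\bigl(\|v_i\|^2+\|v_j\|^2\bigr)-2\langle T(x_i,x_j,v_j),v_i\rangle$, which exhibits the dissipation as a Lipschitz function of $t$ along the bounded trajectory; Barbalat then applies as you intended.
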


\begin{maintheorem}\label{thm:2} Let $N\geq 3$ and  $Z(t)=\{(x_i(t),v_i(t))\}_{i=1}^N $  be the solution  to \eqref{main} with \eqref{eqn:ini}. Additionally, we suppose that      $x_i(0)\ne x_j(0)$ for any $1\leq i\ne j\leq N$ when the repulsive force parameter is nonzero: $\sigma_r > 0$.
Then the solution $Z(t)=\{(x_i(t),v_i(t))\}_{i=1}^N $  satisfies the following asymptotic behaviors.

\begin{enumerate}
  \item[(i)] If $\sigma_a>0$, $\sigma_r =0$ and $\displaystyle \E(Z(0))<\E_C^0=\frac{\sigma_a(N-1)}{N^2 }$, then the solution  has an asymptotic rendezvous.
  \item[(ii)]
If $\displaystyle  \frac{2N}{N-1}\sigma_a>\sigma_r>0$ and $\E(Z(0))<\E_{C}^1$, then the ensemble  has an asymptotic formation configuration, where  $\E_{C}^1$ is the minimum energy of stationary solutions given by
 \[\qquad \qquad\E_{C}^1:=\inf_{\|x_i\|=1, \bar{x}=0}\bigg\{\E_C(X)-\E_C^{min} : \sum_{j=1,j\ne i}^N \frac{\sigma_{ij} }{N}(\|x_i\|^2x_j - \langle x_i,x_j \rangle  x_i)=0,\quad 1\leq i\leq N \bigg\} .\]

  \item[(iii)] If $\displaystyle\sigma_r \geq \frac{2N}{N-1}\sigma_a> 0$ or  $\displaystyle\sigma_r>0=\sigma_a$, then  $\{(x_i(t),v_i(t))\}_{i=1}^N$ has an asymptotic uniform deployment.

\end{enumerate}

\end{maintheorem}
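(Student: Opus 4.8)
The plan is to use the energy functional $\E$ as a strict Lyapunov function and to combine LaSalle's invariance principle with a geometric classification of the limit states. First I would check that the cooperative control law is, up to the positive factor $N$, the negative tangential gradient of the potential $\E_C$ on $(\bbs^2)^N$: writing $\sigma_{ij}=\sigma_a-\sigma_r/\|x_i-x_j\|^2$ and using $\|x_i\|=1$, one verifies that
\begin{align*}
\sum_{j\ne i}\frac{\sigma_{ij}}{N}\big(\|x_i\|^2 x_j-\langle x_i,x_j\rangle x_i\big)=-N\,\mathrm{grad}_{x_i}\E_C.
\end{align*}
The centripetal term $-\|v_i\|^2x_i/\|x_i\|^2$ is orthogonal to $v_i$ and contributes nothing to $\tfrac{d}{dt}\E_K$, while the flocking term is dissipative: using $R(x_1,x_2)^{T}=R(x_2,x_1)$ (so that $\langle v_i,R_{x_j\shortrightarrow x_i}(v_j)\rangle=\langle v_j,R_{x_i\shortrightarrow x_j}(v_i)\rangle$), the isometry $\|R_{x_j\shortrightarrow x_i}(v_j)\|=\|v_j\|$, and $\psi_{ij}=\psi_{ji}$, a direct computation gives
\begin{align*}
\frac{d}{dt}\E(Z(t))=-\frac{1}{2N^2}\sum_{i,j=1}^{N}\psi_{ij}\big\|R_{x_j\shortrightarrow x_i}(v_j)-v_i\big\|^2\le 0.
\end{align*}
Hence $\E(Z(t))$ is non-increasing and bounded below by $0$, so it converges to some $\E_\infty\ge 0$ and the dissipation is integrable in time.

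Next I would apply LaSalle. The $\omega$-limit set $\mathcal W$ of the (global, by Theorem \ref{thm:1}) bounded trajectory is nonempty, compact, invariant, lies in $\{\E=\E_\infty\}$, and by the dissipation identity together with Theorem \ref{thm:1} consists of velocity-aligned states, $R_{x_j\shortrightarrow x_i}(v_j)=v_i$ for every non-antipodal pair. The key structural step is to classify these aligned invariant states. Since $R_{x_j\shortrightarrow x_i}$ is precisely parallel transport along the connecting geodesic, transporting an aligned velocity around the spherical triangle of any three agents shows the holonomy (equal to the enclosed area) must fix a nonzero velocity; hence either all velocities vanish, or all agents lie on a common great circle. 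Imposing invariance under \eqref{main} then pins $\mathcal W$ down to two families: (a) static steady states, where $v_i\equiv 0$ and $X$ is a critical configuration of $\E_C$ (the cooperative control vanishes), and (b) great-circle flocks, where the agents are equally spaced on a common great circle and rigidly rotate along it, which forces $\bar x=0$.

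On this reduced set the three cases become energy comparisons. A useful identity, obtained by dotting the equilibrium relation $\sum_{j\ne i}\sigma_{ij}(x_j-\langle x_i,x_j\rangle x_i)=0$ with $\bar x$, summing over $i$, and symmetrizing, is
\begin{align*}
\|\bar x\|^2\big[2\sigma_a N-\sigma_r(N-1)\big]=2\sigma_a\sum_{i=1}^N\langle \bar x,x_i\rangle^2\ge 2\sigma_a N\|\bar x\|^4,
\end{align*}
which yields $\|\bar x\|^2\le 1-\tfrac{\sigma_r(N-1)}{2\sigma_a N}$ at any static steady state. For (iii) with $\sigma_r\ge\tfrac{2N}{N-1}\sigma_a$ (or $\sigma_a=0$, handled by the identity directly) the right side is $\le 0$, so every element of $\mathcal W$ (steady or flock) has $\bar x=0$ and thus $\bar x(t)\to 0$. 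For (i), $\sigma_r=0$ gives $\E_C=\tfrac{\sigma_a}{2}(1-\|\bar x\|^2)$ and $\E_C^{min}=0$, so $\E(Z(0))<\E_C^0$ forces $\|\bar x(t)\|^2>1-\tfrac{2(N-1)}{N^2}$ for all $t$; among the aligned states — antipodal two-clusters and flocks — only the coincident cluster satisfies this bound, so $\mathcal W$ consists of rendezvous states and $\max_{i,j}\|x_i-x_j\|\to 0$. For (ii), rendezvous is excluded by $\sigma_r>0$, while $\E(Z(0))<\E_C^1$ excludes every $\bar x=0$ steady state and every flock (each of energy at least $\E_C^1$); hence $\mathcal W$ consists of formations with $\bar x\ne 0$ and $\rho>0$, whose existence is guaranteed by $\sigma_r<\tfrac{2N}{N-1}\sigma_a$.

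The main obstacle is the classification step: turning the holonomy heuristic into a rigorous proof that every velocity-aligned invariant configuration is of type (a) or (b), including a careful treatment of the antipodal pairs excluded by the weight $\|x_i+x_j\|$ in the definition of velocity alignment. A secondary difficulty, relevant to (ii), is upgrading convergence to the set $\mathcal W$ into convergence of $\rho(t)$ to a single limit $r_0>0$; since $\E_C$ is real-analytic on the collision-free configuration space (on which Theorem \ref{thm:1} guarantees a uniform separation when $\sigma_r>0$), I would close this gap with a {\L}ojasiewicz gradient inequality, which also identifies $\mathcal W$ with a single symmetry orbit of formations sharing a common value of $\rho$.
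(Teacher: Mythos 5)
Your strategy mirrors the paper's (energy dissipation, LaSalle's principle, holonomy rigidity on spherical triangles, and the centroid identity), but the classification of velocity-aligned invariant states --- which you yourself flag as the main obstacle and leave at the level of a heuristic --- is where essentially all of the work lies, and the version you assert is partly false. Great-circle flocks in the invariant set are neither necessarily equally spaced nor forced to have $\bar x=0$: for $\sigma_r>0$ there exist rigidly rotating isoceles equilibria on a great circle (take $\theta_1=-\alpha$, $\theta_2=0$, $\theta_3=\alpha$ and solve $\sigma_{12}+2\sigma_{13}\cos\alpha=0$, which has a root by the intermediate value theorem when $\sigma_a>\sigma_r/2$), and these have $\bar x\ne 0$; for $\sigma_r=0$ a coincident cluster rotating on a great circle is a flock with $\|\bar x\|=1$. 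What is actually true, and what the paper proves in Proposition \ref{prop 1.6}, is that every invariant state --- stationary or rotating --- satisfies the equilibrium relation \eqref{eq 3.15}; for flocks this requires the argument that $\ddot\theta_i=\frac{1}{N}\sum_{j\ne i}\sigma_{ij}\sin(\theta_j-\theta_i)$ is constant along the rigid rotation and must therefore vanish, since otherwise $\dot\theta_i$ would be unbounded, contradicting the energy bound. You never establish \eqref{eq 3.15} for flocks (your unproven ``equal spacing'' would give it by symmetry, but it does not hold in general), yet you rely on it when you apply the centroid identity to ``steady or flock'' states in case (iii).

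The same false claim (``every flock has $\bar x=0$, hence energy at least $\E_C^1$'') is what your case (ii) rests on, and it would wrongly exclude exactly the rotating formations with $\bar x\neq 0$ that this model is designed to produce (the paper explicitly remarks that limiting speeds need not vanish). Moreover, after this exclusion you only obtain convergence to a \emph{set} of formations, and you propose a {\L}ojasiewicz inequality to extract a single limit of $\rho(t)$ --- a tool whose applicability is doubtful here, since the system is not a gradient flow and no {\L}ojasiewicz--Simon framework is set up. The gap closes without it: combine the identity you derived (the paper's Lemma \ref{lemma 3.6}) with the elementary relation $\rho+\|\bar x\|^{-2}\sum_{i=1}^N\langle x_i,\bar x\rangle^2=N$; then every state in the invariant set with $\bar x\ne 0$ has exactly $\rho=\frac{(N-1)\sigma_r}{2\sigma_a}$, while states with $\bar x=0$ are excluded because they satisfy \eqref{eq 3.15} and hence have energy at least $\E_C^1>\E(Z(0))$. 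Since $\rho$ is continuous and takes this single value on the compact $\omega$-limit set, $\rho(t)\to\frac{(N-1)\sigma_r}{2\sigma_a}>0$ follows directly; this is precisely how the paper concludes (ii), flocks included, with no {\L}ojasiewicz argument needed.
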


\begin{remark}\label{rmk 1.2}
\begin{enumerate}
\item The conditions for the above theorem are almost optimal. Without the above sufficient conditions, the result in Theorem \ref{thm:2} cannot be established,  since there are   unstable steady-states that do not satisfy the initial constraints on energy. Further, in the case of $N=2$, we cannot  expect that the above results hold due to the result in Corollary \ref{propN2}.
\item The $N$-dependence of $\E_C^0$ in  Theorem \ref{thm:2} $(i)$ can be removed by adding a restriction of the initial configuration size; see \cite{C-K-S2}.

\item The solution to our model converges to a stable-state, but the speed of agents may not converge to $0$. See Corollary \ref{propN2} and Proposition \ref{prop 1.6}. %On the other hand, while the damping term makes all ensembles converge to a steady state,
                              \end{enumerate}

\end{remark}

The remaining subject is the formation flight of a multi-agent ensemble on a sphere. As mentioned before, by adding a boost term to \eqref{main}, we will implement the formation flight of multiple agents on a sphere  with a nonzero constant speed through numerical simulations. We emphasize that cooperative control law alone cannot implement this nonzero speed formation pattern.  In the formation flight with a nonzero constant speed, the formation of agents is very similar to the formation without the boost term in Theorem \ref{thm:2}. In addition, the mathematical results in Theorem \ref{thm:2} are confirmed through numerical simulation, and various formation patterns are implemented through more general control parameters. We will describe detailed numerical simulation results in Section \ref{sec4}.

The rest of this paper is organized as follows. In Section \ref{sec2}, we present elementary properties of the rotation operator and the global well-posedness of \eqref{main}. In Section \ref{sec3}, we prove the convergence results in our main theorems. In Section \ref{sec4}, we provide several numerical simulations that verify our analytical results and we present the robust formation flight by adding a boost term. Finally, Section \ref{sec5} is devoted to the summary of our main results.  \newline

{\bf Notation:}
Throughout this paper, we use the following notation.

\begin{itemize}
  \item The domain  is a unit sphere defined by
\begin{align*}
%\label{def:d}
\mathbb{S}^2 :=\{(a,b,c)^T\in \bbr^3: a^2+b^2+c^2=1 \}.
\end{align*}
  \item We set $X=\{x_i\}_{i=1}^N$ is a sequence of column vectors in $\bbs^2$, $V=\{v_i\}_{i=1}^N$ is a sequence of the corresponding velocity vectors with $\langle x_i,v_i \rangle=0$, and $Z=\{(x_i,v_i)\}_{i=1}^N$ be a sequence of phase vectors.

  \item We denote the tangent bundle of $\bbs^2$ by $T\bbs^2$.
  \item For a given $z_1,z_2 \in \bbr^3$, we use  $\langle z_1,z_2\rangle$ to denote the standard inner product in $\bbr^3$ and the standard symbol
 \[\|z_1\| = \|z_1\|_2=\sqrt{\langle z_1,z_1\rangle}.\]

\end{itemize}

\section{Elementary properties of the Newtonian dynamics with the flocking operator and its well-posedness}\label{sec2}
\setcounter{equation}{0}
In this section, we briefly review the elementary properties of  \eqref{main} and prove the existence and uniqueness of the global-in-time solution to \eqref{main}. The rotation operator $R_{x_i\shortrightarrow x_j}$  has several remarkable properties such as preserving the modulus of the vector and always having the inverse, but it has singularity when  $x_i$ and $x_j$ are antipodal points. Thus, to prove the existence of a global-in-time solution, we must deal with this singularity. For this, we assumed that $\psi(\cdot)$ satisfies the admissible conditions in \eqref{eqn:psi}. Using this condition, in \cite{C-K-S}, we proved that the rotation operator $\psi R$  is locally Lipschitz. From this result, we can obtain the global existence of a solution  to the model. In addition, we define an energy functional $\E$ and prove that $\E$ is a decreasing function for time $t\geq 0$. From this property and the definition of $\E$, we can obtain  various boundedness of the ensemble which play an important role in later proving the asymptotic behaviors of the solution.

\begin{lemma}\label{lemma 2.3} (Lemma 2.4 in \cite{C-K-S})
Let  $x_1, x_2 \in \mathbb{S}^2$ be not antipodal points $(x_1 \ne -x_2)$. Then the rotation operator $R_{\cdot \shortrightarrow \cdot}(\cdot)$ satisfies
\begin{align}
\nonumber
R_{x_1\shortrightarrow x_2}(x_1) = x_2, \quad R_{x_1\shortrightarrow x_2}(x_2) = 2\langle x_1, x_2\rangle x_2 - x_1~  \hbox{ and }  ~R_{x_1\shortrightarrow x_2} (x_1 \times x_2) = x_1 \times x_2.
\end{align}
Furthermore, we have
\begin{align}
\nonumber
R_{x_1\shortrightarrow x_2}^{T} = R_{x_2\shortrightarrow x_1},\quad  %= R_{x_1\shortrightarrow x_2}^{-1}
R_{x_1\shortrightarrow x_2}^T \circ R_{x_1\shortrightarrow x_2} = I_{\mathbb{S}^2}.
\end{align}

\end{lemma}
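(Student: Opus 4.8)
The plan is to treat the two cases in the definition of $R(x_1,x_2)$ separately and verify each identity by direct computation. In the generic case $x_1 \neq \pm x_2$, I would abbreviate $c = \langle x_1, x_2\rangle$ and $n = (x_1\times x_2)/|x_1\times x_2|$, so that $R = R(x_1,x_2) = cI - x_1 x_2^T + x_2 x_1^T + (1-c)\,n n^T$. The three evaluation formulas then reduce to bookkeeping with the outer-product terms, using that $(x_i x_j^T)\,y = \langle x_j, y\rangle x_i$ together with the unit-length constraints $\|x_1\| = \|x_2\| = 1$ and the orthogonality relations $\langle x_1\times x_2, x_1\rangle = \langle x_1\times x_2, x_2\rangle = 0$. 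For instance, applying $R$ to $x_1$ gives $c x_1 - \langle x_2,x_1\rangle x_1 + \|x_1\|^2 x_2 + (1-c)\langle n, x_1\rangle n = x_2$, since $\langle n, x_1\rangle = 0$; the formulas for $R\,x_2$ and $R(x_1\times x_2)$ follow the same way, the last one using $\langle n, x_1\times x_2\rangle = |x_1\times x_2|$. The degenerate case $x_1 = x_2$ is immediate from $R = I$ and $\langle x_1, x_2\rangle = 1$.

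For the transpose identity $R_{x_1\shortrightarrow x_2}^T = R_{x_2\shortrightarrow x_1}$, I would transpose the matrix expression, using $(x_1 x_2^T)^T = x_2 x_1^T$ and $(nn^T)^T = nn^T$, to obtain $R^T = cI - x_2 x_1^T + x_1 x_2^T + (1-c)nn^T$. Comparing with the definition of $R(x_2,x_1)$, the key observation is that swapping the arguments replaces $x_1\times x_2$ by $x_2\times x_1 = -(x_1\times x_2)$, which leaves the rank-one term $nn^T$ unchanged, and leaves $c = \langle x_1,x_2\rangle = \langle x_2,x_1\rangle$ unchanged; hence $R^T = R(x_2,x_1)$.

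The last identity $R^T R = I$ is the only step with any real content, and I expect it to be the main (mild) obstacle. The cleanest route I would take is to exhibit $R$ as a planar rotation in an adapted orthonormal frame rather than multiply the matrices directly. Set $s = |x_1\times x_2| = \sqrt{1-c^2}$ and $u = n\times x_1 = (x_2 - c x_1)/s$, obtained from the triple-product identity $(x_1\times x_2)\times x_1 = x_2 - c x_1$; then $\{x_1, u, n\}$ is an orthonormal basis of $\bbr^3$. A short computation, using $\langle x_2, u\rangle = s$ and $R n = n$, shows that in this basis $R$ acts as $R x_1 = c x_1 + s u$, $R u = -s x_1 + c u$, and $R n = n$, i.e. $R$ carries the block form of a rotation by angle $\theta$ (with $\cos\theta = c$, $\sin\theta = s$) about the axis $n$. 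Orthogonality $R^T R = I$ is then immediate from this rotation-matrix form; alternatively, one may combine the already-established identity $R^T = R_{x_2\shortrightarrow x_1}$ with the evaluation formulas to check $R^T R$ on the same basis. The case $x_1 = x_2$ is trivial.
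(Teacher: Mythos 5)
Your proposal is correct and complete. Note that the paper itself gives no proof of this lemma: it is quoted verbatim from Lemma 2.4 of \cite{C-K-S}, so there is no in-paper argument to compare against; your write-up therefore supplies a self-contained verification. All the computational steps check out: the three evaluation identities follow exactly as you say from $(x_i x_j^T)y=\langle x_j,y\rangle x_i$, $\langle n,x_1\rangle=\langle n,x_2\rangle=0$, and $\langle n, x_1\times x_2\rangle=|x_1\times x_2|$; the transpose identity uses precisely the sign cancellation $x_2\times x_1=-(x_1\times x_2)$ in the rank-one term; and your frame $\{x_1,\,u,\,n\}$ with $u=(x_2-\langle x_1,x_2\rangle x_1)/|x_1\times x_2|$ is indeed orthonormal, with $Rx_1=cx_1+su$, $Ru=-sx_1+cu$, $Rn=n$ (where $c=\langle x_1,x_2\rangle$, $s=\sqrt{1-c^2}$), which exhibits $R$ as a rotation about $n$ and gives $R^TR=I$ at once. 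This geometric route to orthogonality is arguably cleaner than brute-force multiplication of the two four-term matrix expressions, and it also makes transparent why $R_{x_1\shortrightarrow x_2}$ transports tangent vectors the way the flocking operator requires; the degenerate case $x_1=x_2$ is handled correctly as trivial.
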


\begin{lemma}\label{lemma 1.1}
We assume that $\psi_{ij}$ are nonnegative bounded functions for all $i,j\in \{1,\ldots,N\}$.
Let $\{(x_i(t),v_i(t))\}_{i=1}^N$ be a solution to \eqref{main} and the initial data satisfy the admissible conditions in \eqref{eqn:ini}. Then  for all $i\in \{1,\ldots,N\}$ and  $t>0$,
\begin{align*}\langle v_i(t),x_i(t)\rangle=0 \hbox{ and } \quad \|x_i(t)\|= 1.\end{align*}
\end{lemma}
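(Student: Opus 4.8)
The plan is to show that the two scalar quantities $a_i(t):=\langle x_i(t),v_i(t)\rangle$ and $b_i(t):=\|x_i(t)\|^2$ evolve according to a closed system of ODEs for which the constant sequence $(a_i,b_i)\equiv(0,1)$ is a solution, and then to invoke uniqueness of solutions of that system. First I would differentiate $b_i$ using the first equation in \eqref{main}: since $\dot x_i=v_i$, we immediately get $\dot b_i=2\langle x_i,v_i\rangle=2a_i$. Next I would differentiate $a_i$, writing $\dot a_i=\langle \dot x_i,v_i\rangle+\langle x_i,\dot v_i\rangle=\|v_i\|^2+\langle x_i,\dot v_i\rangle$, and evaluate $\langle x_i,\dot v_i\rangle$ term by term against the three contributions on the right-hand side of the velocity equation.

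The three contributions are handled as follows. The first (centripetal) term gives $\langle x_i,-\frac{\|v_i\|^2}{\|x_i\|^2}x_i\rangle=-\|v_i\|^2$ exactly, which is precisely the cancellation engineered to keep $a_i$ from growing. The cooperative-control term vanishes identically, since $\langle x_i,\|x_i\|^2 x_j-\langle x_i,x_j\rangle x_i\rangle=\|x_i\|^2\langle x_i,x_j\rangle-\langle x_i,x_j\rangle\|x_i\|^2=0$ for every pair, independent of any norm constraint. For the flocking term I would use the transpose identity $R_{x_j\shortrightarrow x_i}^T=R_{x_i\shortrightarrow x_j}$ from Lemma~\ref{lemma 2.3} together with $R_{x_i\shortrightarrow x_j}(x_i)=\|x_i\|^2 x_j$ to compute $\langle x_i,R_{x_j\shortrightarrow x_i}(v_j)\rangle=\langle R_{x_i\shortrightarrow x_j}(x_i),v_j\rangle=\|x_i\|^2\langle x_j,v_j\rangle=b_i a_j$, so that the flocking contribution is $\sum_j\frac{\psi_{ij}}{N}(b_i a_j-a_i)$.

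Collecting terms, the $\|v_i\|^2$ pieces cancel and I obtain the closed (non-autonomous) system $\dot a_i=\sum_j\frac{\psi_{ij}}{N}(b_i a_j-a_i)$ and $\dot b_i=2a_i$, where the coefficients $\psi_{ij}(t)$ are bounded continuous functions of $t$ determined by the already-existing trajectory. The admissible conditions \eqref{eqn:ini} read $a_i(0)=0$ and $b_i(0)=1$. One checks directly that the constant sequence $a_i\equiv 0$, $b_i\equiv 1$ solves this system with these initial data, and since its right-hand side is polynomial in $(a,b)$ with bounded time-dependent coefficients — hence locally Lipschitz in $(a,b)$ on compact time intervals and continuous in $t$ — the standard ODE uniqueness theorem forces $a_i(t)\equiv 0$ and $b_i(t)\equiv 1$ for all $t$, which is exactly the assertion.

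The step I expect to be most delicate is the evaluation of the flocking term, namely justifying $R_{x_i\shortrightarrow x_j}(x_i)=\|x_i\|^2 x_j$ and the transpose relation without circularly presupposing $\|x_i\|=1$, since Lemma~\ref{lemma 2.3} is stated for unit vectors. To avoid this chicken-and-egg issue I would re-derive the general (non-unit) forms directly from the matrix formula defining $R(x_1,x_2)$ — both identities survive verbatim for non-parallel vectors, with the factor $\|x_i\|^2$ emerging as above — so that the closed $(a_i,b_i)$ system is obtained unconditionally on the maximal interval of existence; alternatively one can run the argument on that interval and close it by a continuity/bootstrap argument. Either route secures the closed system and hence the conclusion.
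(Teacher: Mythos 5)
Your proposal is correct, and its computational core is the same as the paper's: take the inner product of the velocity equation with $x_i$, note that the centripetal term contributes exactly $-\|v_i\|^2$ and that the cooperative term vanishes identically (both facts independent of $\|x_i\|$), and evaluate the flocking term via the transpose identity. The difference lies in how the argument is closed. The paper writes $\langle R_{x_j\shortrightarrow x_i}(v_j),x_i\rangle=\langle v_j,x_j\rangle$, derives the differential inequality $\frac{d}{dt}\sum_i\langle v_i,x_i\rangle^2\le 2\max_{j,k}\psi_{jk}\sum_i\langle v_i,x_i\rangle^2$, concludes $\langle v_i,x_i\rangle\equiv 0$ by Gronwall, and only afterwards obtains $\|x_i\|\equiv 1$ from $\frac{d}{dt}\|x_i\|^2=2\langle v_i,x_i\rangle$. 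You instead close a coupled system for $(a_i,b_i)=(\langle x_i,v_i\rangle,\|x_i\|^2)$ and appeal to ODE uniqueness, obtaining both conservation laws at once. Your route has one genuine advantage, which you correctly identify: Lemma \ref{lemma 2.3} is stated for unit vectors, so the paper's invocation of it at this stage is, strictly speaking, circular, whereas your re-derivation of $R^T_{x_j\shortrightarrow x_i}=R_{x_i\shortrightarrow x_j}$ and $R_{x_i\shortrightarrow x_j}(x_i)=\|x_i\|^2x_j$ from the matrix formula is valid for arbitrary non-parallel nonzero vectors and removes the circularity. (The paper's own rigorous repair is different: in the proof of Theorem \ref{thm:1} it passes to the normalized system \eqref{eqn:wel11}, for which the analogous identity holds exactly without assuming unit norms.)

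One technical point needs tightening. Your identity $\langle x_i,R_{x_j\shortrightarrow x_i}(v_j)\rangle=b_ia_j$ fails precisely in the parallel case you set aside: whenever $x_j(t)=x_i(t)$ --- and this includes the diagonal term $j=i$, which is present in the sum in \eqref{main} --- the operator equals the identity by definition, so the contribution to $\dot a_i$ is $\frac{\psi_{ij}}{N}(a_j-a_i)$ (zero for $j=i$), not $\frac{\psi_{ij}}{N}(b_ia_j-a_i)$; the two agree only when $b_i=1$, which you cannot presuppose. Consequently the actual trajectory satisfies a system whose coefficients may switch between the two forms, possibly discontinuously in $t$, and plain Picard--Lindel\"of uniqueness, which requires continuity in $t$, does not apply verbatim; your fallback ``continuity/bootstrap'' remark gestures at this but is not carried out. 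The repair is easy, because in every case the right-hand side of $\dot a_i$ is bounded by $C\max_j|a_j|$ with $C$ locally bounded and vanishes when $a\equiv 0$: hence $\frac{d}{dt}\sum_i a_i^2\le C'\sum_i a_i^2$, and Gronwall with zero initial data --- exactly the paper's closing mechanism, which is robust to the switching --- yields $a\equiv 0$, after which $b_i\equiv 1$ follows from $\dot b_i=2a_i$.
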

\begin{proof}
%We recall the system of equations.
%\begin{align}
%\begin{aligned}\label{ECMre_pre}
%\dot{x}_i&=v_i,\\
%\dot{v}_i&=-\frac{\|v_i\|^2}{\|x_i\|^2}x_i+ \sum_{j=1}^N\frac{\psi_{ij}}{N}(R_{x_j\shortrightarrow x_i}(v_j)-v_i).
%\end{aligned}
%\end{align}
We follow the same argument in \cite{C-K-S}. We take the inner product between  the second equation in \eqref{main} and $x_i$ to obtain
\begin{align}\begin{aligned}\label{eq 2.0}
\langle \dot{v}_i, x_i\rangle &=-\|v_i\|^2+\sum_{j=1}^N\frac{\psi_{ij}}{N}\big(\langle R_{x_j\shortrightarrow x_i}(v_j), x_i\rangle-\langle v_i, x_i\rangle\big)
\\&\qquad\qquad\qquad+\sum_{j=1,j\ne i}^N \frac{\sigma_{ij} }{N}\big(\|x_i\|^2\langle x_j, x_i\rangle  - \langle x_i,x_j \rangle \langle x_i,x_i\rangle\big)
\\&=-\|v_i\|^2+\sum_{j=1}^N\frac{\psi_{ij}}{N}\big(\langle R_{x_j\shortrightarrow x_i}(v_j), x_i\rangle-\langle v_i, x_i\rangle\big)
.
\end{aligned}
\end{align}
By Lemma \ref{lemma 2.3}, \eqref{eq 2.0} implies
\begin{align}\begin{aligned}
\label{eq 2.7}
\langle \dot{v}_i, x_i\rangle
&=-\|v_i\|^2+\sum_{j=1}^N\frac{\psi_{ij}}{N}\big(\langle v_j, x_j\rangle-\langle v_i, x_i\rangle\big).
\end{aligned}
\end{align}
The equalities in \eqref{eq 2.7} yield that
\begin{align*}
\frac{d}{dt}\sum_{i=1}^N|\langle v_i,x_i\rangle|^2&=2\sum_{i=1}^N \big( \langle \dot{v}_i,x_i\rangle+\langle v_i,\dot{x}_i\rangle\big)\langle v_i,x_i\rangle  \\
&=  2\sum_{i=1}^N\big(\langle \dot{v}_i, x_i\rangle +\|v_i\|^2\big)~\langle v_i,x_i\rangle
\\
&= 2\sum_{i=1}^N \sum_{j=1}^N\frac{\psi_{ij}}{N}\big(\langle v_j, x_j\rangle-\langle v_i, x_i\rangle\big)~\langle v_i,x_i\rangle
\\
&\leq  2\sum_{i=1}^N \sum_{j=1}^N\frac{\psi_{ij}}{N}\langle v_j, x_j\rangle~\langle v_i,x_i\rangle.
\end{align*}
%Let $\displaystyle \psi_M^\epsilon=\max_{i,j}\sup_{t} \psi_{i,j}(t)$.
Then we have
\begin{align*}
\frac{d}{dt}\sum_{i=1}^N |\langle v_i(t),x_i(t)\rangle|^2&\leq2\max_{1\leq j,k\leq N}\psi_{jk} \sum_{i=1}^N \big|\langle v_i(t), x_i(t)\rangle \big|^2.
\end{align*}

From Gronwall's inequality and the admissible conditions on the initial  data, it follows  that
\begin{align*}\sum_{i=1}^N |\langle v_i(t),x_i(t)\rangle|^2\equiv0,\quad \mbox{for}~t>0.\end{align*}
Again by the inner product between $\dot{x}_i$ and $x_i$ and the first equation of \eqref{main}, we have
\begin{align*}
\frac{d}{dt}\| x_i\|^2=2\langle \dot{x}_i,x_i\rangle =2\langle v_i,x_i\rangle.
\end{align*}
Therefore,  we have $\|x_i(t)\|\equiv 1$, for $t>0$, $i\in\{1,\ldots N\}$.

\end{proof}

 \begin{proposition}\label{prop 2.4}
We assume that  $\psi_{ij}$  satisfies  \eqref{eqn:psi_ij}-\eqref{eqn:psi}
 and $\sigma_{ij}$ is defined by \eqref{sigma}-\eqref{sigma_def}. For  a solution $Z(t)=\{(x_i(t),v_i(t))\}_{i=1}^N $ to \eqref{main} satisfying the admissible conditions in \eqref{eqn:ini}, the following energy identity holds.
\begin{align}\label{EDi}
\frac{d\E(Z(t))}{dt}=-\sum_{i,j=1}^N\frac{\psi(\|x_i(t)-x_j(t)\|)}{2N^2}\| R_{x_j \shortrightarrow x_i}(v_j(t))-v_i(t)\|^2.\end{align}
 \end{proposition}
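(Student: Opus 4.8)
The plan is to differentiate $\E(Z(t)) = \E_K(V) + \E_C(X) - \E_C^{min}$ term by term along trajectories of \eqref{main}, observing that the constant $\E_C^{min}$ contributes nothing. First I would compute $\frac{d}{dt}\E_K = \frac{1}{N}\sum_i \langle v_i, \dot{v}_i\rangle$ and substitute the velocity equation. The centripetal term $-\|v_i\|^2 x_i/\|x_i\|^2$ dies against $v_i$ because $\langle x_i, v_i\rangle = 0$ by Lemma \ref{lemma 1.1}; in the cooperative term, $\langle \|x_i\|^2 x_j - \langle x_i,x_j\rangle x_i,\, v_i\rangle$ collapses to $\langle x_j, v_i\rangle$ using $\|x_i\|=1$ and $\langle x_i,v_i\rangle = 0$. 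This splits $\frac{d}{dt}\E_K$ into a flocking part $F := \frac{1}{N^2}\sum_{i,j}\psi_{ij}\langle R_{x_j\shortrightarrow x_i}(v_j) - v_i,\, v_i\rangle$ and a cooperative part $C := \frac{1}{N^2}\sum_{i\ne j}\sigma_{ij}\langle x_j, v_i\rangle$.

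For the flocking part I would symmetrize. Since $R_{x_j\shortrightarrow x_i}$ preserves norms by Lemma \ref{lemma 2.3}, polarization gives the pointwise identity $\langle R_{x_j\shortrightarrow x_i}(v_j),\, v_i\rangle = \tfrac{1}{2}(\|v_i\|^2 + \|v_j\|^2 - \|R_{x_j\shortrightarrow x_i}(v_j) - v_i\|^2)$, hence $\langle R_{x_j\shortrightarrow x_i}(v_j) - v_i,\, v_i\rangle = \tfrac{1}{2}\|v_j\|^2 - \tfrac{1}{2}\|v_i\|^2 - \tfrac{1}{2}\|R_{x_j\shortrightarrow x_i}(v_j) - v_i\|^2$. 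Because $\psi_{ij} = \psi(\|x_i - x_j\|)$ is symmetric in $(i,j)$, summing over $i,j$ makes the $\|v_j\|^2 - \|v_i\|^2$ contributions cancel after relabeling, leaving exactly $F = -\frac{1}{2N^2}\sum_{i,j}\psi_{ij}\|R_{x_j\shortrightarrow x_i}(v_j) - v_i\|^2$, which is the claimed right-hand side of \eqref{EDi}.

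It remains to show the cooperative part cancels the derivative of the configurational energy, that is, $C + \frac{d}{dt}\E_C = 0$; this is where the gradient-flow structure of the control law is doing the work, and is the step I expect to require the most care. Differentiating $\E_C$ and combining the attractive and repulsive pieces through $\sigma_{ij} = \sigma_a - \sigma_r/\|x_i - x_j\|^2$ yields $\frac{d}{dt}\E_C = \frac{1}{2N^2}\sum_{i\ne j}\sigma_{ij}\langle x_i - x_j,\, v_i - v_j\rangle$. Expanding the inner product and using $\langle x_i, v_i\rangle = 0$ gives $\langle x_i - x_j,\, v_i - v_j\rangle = -\langle x_i, v_j\rangle - \langle x_j, v_i\rangle$, and the symmetry $\sigma_{ij} = \sigma_{ji}$ lets me relabel so that $\frac{d}{dt}\E_C = -\frac{1}{N^2}\sum_{i\ne j}\sigma_{ij}\langle x_j, v_i\rangle = -C$. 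Adding the three contributions gives $\frac{d}{dt}\E = F$, which is precisely \eqref{EDi}. The only delicate points are the symmetrization bookkeeping and confirming that differentiating the logarithmic term produces exactly the factor $\sigma_r/\|x_i-x_j\|^2$ needed to reassemble $\sigma_{ij}$; both become routine once the constraint $\langle x_i, v_i\rangle = 0$ from Lemma \ref{lemma 1.1} is invoked.
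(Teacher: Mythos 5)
Your proposal is correct, and its overall architecture coincides with the paper's proof: differentiate $\E_K$ along the flow, kill the centripetal term with $\langle x_i,v_i\rangle=0$ from Lemma \ref{lemma 1.1}, reduce the cooperative contribution to $\sum_{i\ne j}\sigma_{ij}\langle x_j,v_i\rangle/N^2$, and show that this exactly cancels $\frac{d}{dt}\E_C$ via the symmetry $\sigma_{ij}=\sigma_{ji}$ and $\langle x_i-x_j,v_i-v_j\rangle=-\langle x_i,v_j\rangle-\langle x_j,v_i\rangle$. The one place where you genuinely diverge is the flocking dissipation identity \eqref{lemma 1.2}: the paper obtains it by rewriting $\langle R_{x_j\shortrightarrow x_i}(v_j)-v_i,v_i\rangle$ through the adjoint identity $R_{x_1\shortrightarrow x_2}^{T}=R_{x_2\shortrightarrow x_1}$ of Lemma \ref{lemma 2.3}, swapping the roles of $i$ and $j$, and averaging the two expressions, whereas you use only norm preservation ($R^T\circ R=I$) and polarization, $\langle R_{x_j\shortrightarrow x_i}(v_j),v_i\rangle=\tfrac12\bigl(\|v_i\|^2+\|v_j\|^2-\|R_{x_j\shortrightarrow x_i}(v_j)-v_i\|^2\bigr)$, after which the $\|v_j\|^2-\|v_i\|^2$ terms cancel under the symmetric sum. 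Your variant is slightly more economical in its hypotheses: it would work for any norm-preserving interaction map with symmetric weights, without invoking the specific structural fact that the transpose of $R_{x_j\shortrightarrow x_i}$ is the reverse rotation $R_{x_i\shortrightarrow x_j}$; the paper's index-swap argument, by contrast, makes that adjoint structure explicit, which is the same mechanism reused later in the invariance analysis. Both yield \eqref{EDi} with identical constants, so there is no gap.
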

\begin{proof}
By Lemma \ref{lemma 2.3},
\begin{align*}
\sum_{i,j=1}^N\frac{\psi_{ij}}{N}\big\langle R_{x_j \shortrightarrow x_i}(v_j)-v_i, v_i\big \rangle
&=\sum_{i,j=1}^N\frac{\psi_{ij}}{N}\big\langle v_j-R_{x_i \shortrightarrow x_j}(v_i), R_{x_i \shortrightarrow x_j}(v_i)\big \rangle\\
&=-\sum_{i,j=1}^N\frac{\psi_{ij}}{N}\big\langle R_{x_i \shortrightarrow x_j}(v_i)-v_j, R_{x_i \shortrightarrow x_j}(v_i)\big \rangle.
\end{align*}
If we change the role of the indices $i$ and $j$ in the above, then we obtain that
\begin{align*}
\sum_{i,j=1}^N\frac{\psi_{ij}}{N}\big\langle R_{x_j \shortrightarrow x_i}(v_j)-v_i, v_i\big \rangle
&=-\sum_{i,j=1}^N\frac{\psi_{ji}}{N}\big\langle R_{x_j \shortrightarrow x_i}(v_j)-v_i, R_{x_j \shortrightarrow x_i}(v_j)\big \rangle.
\end{align*}
Since we assume that $\psi_{ij}=\psi_{ji}$ for all $i,j\in \{1,\ldots,N\}$,
\begin{align}\label{lemma 1.2}
 \sum_{i,j=1}^N\frac{\psi_{ij}}{N}\big\langle R_{x_j \shortrightarrow x_i}(v_j)-v_i, v_i\big \rangle= - \sum_{i,j=1}^N\frac{\psi_{ij}}{2N}\| R_{x_j \shortrightarrow x_i}(v_j)-v_i\|^2.
\end{align}

We multiply the second equation in \eqref{main} by $v_i$ to obtain
\begin{align*}
\frac{1}{2}\frac{d}{dt} \|v_i\|^2 =\langle \dot v_i,v_i\rangle=&-\frac{\|v_i\|^2}{\|x_i\|^2}\langle x_i,v_i\rangle +\sum_{j=1}^N\frac{\psi_{ij}}{N}\big\langle R_{x_j \shortrightarrow x_i}(v_j)-v_i, v_i\big \rangle\\
&\qquad+\sum_{j=1,j\ne i}^N \frac{\sigma_{ij} }{N}\Big(\|x_i\|^2\langle x_j, v_i\rangle  - \langle x_i,x_j \rangle \langle x_i,v_i \rangle \Big).
\end{align*}
By the properties in Lemma \ref{lemma 1.1},
%
%
%we have
%\begin{align*}
%\frac{1}{2}\frac{d}{dt} \|v_i\|^2 = \sum_{j=1}^N\frac{\psi_{ij}}{N}\big\langle R_{x_j \shortrightarrow x_i}(v_j)-v_i, v_i\big \rangle+\sum_{j=1,j\ne i}^N \frac{\sigma_{ij} }{N}\langle x_j, v_i\rangle.
%\end{align*}
%We add the above for all indices $1\leq i\leq N$ to obtain
%
\begin{align}\label{eq 2.25}
\frac{1}{2}\frac{d}{dt} \sum_{i=1}^N\|v_i\|^2 = \sum_{i,j=1}^N\frac{\psi_{ij}}{N}\big\langle R_{x_j \shortrightarrow x_i}(v_j)-v_i, v_i\big \rangle+\sum_{i,j=1,j\ne i}^N \frac{\sigma_{ij} }{N}\langle x_j, v_i\rangle.
\end{align}

By \eqref{lemma 1.2} and \eqref{eq 2.25}, the following holds.
\begin{align}\begin{aligned}\label{eq 2.26}
\frac{d\E_K}{dt}&=\frac{1}{2N}\frac{d}{dt} \sum_{i=1}^N\|v_i\|^2\\& = -\sum_{i,j=1}^N\frac{\psi_{ij}}{2N^2}\| R_{x_j \shortrightarrow x_i}(v_j)-v_i\|^2+\sum_{i,j=1,j\ne i}^N \frac{\sigma_{ij} }{N^2}\langle x_j, v_i\rangle.
\end{aligned}\end{align}
Since $\sigma_{ij}=\sigma_{ji}$ for all indices $1\leq i\leq N$, we have
 \begin{align}\begin{aligned}\label{eq 2.27}
 \sum_{i,j=1,j\ne i}^N \frac{\sigma_{ij} }{N^2}\langle x_j, v_i\rangle&= \sum_{i,j=1,j\ne i}^N \frac{\sigma_{ij} }{2N^2}(\langle x_j, v_i\rangle+\langle x_i, v_j\rangle)\\
 &= -\sum_{i,j=1,j\ne i}^N \frac{\sigma_{ij} }{2N^2}\langle x_i-x_j, v_i-v_j\rangle
 \\& = -\frac{d\E_C}{dt}
 .
\end{aligned}\end{align}
 From \eqref{eq 2.26}-\eqref{eq 2.27}, we can obtain the  energy dissipation identity in \eqref{EDi}.
%\begin{align*}
%\frac{d\E}{dt}=-\sum_{i,j=1}^N\frac{\psi_{ij}}{2N^2}\| R_{x_j \shortrightarrow x_i}(v_j)-v_i\|^2.
%\end{align*}

\end{proof}

\begin{lemma}
\label{lem:con}(Lemma 3.5 in \cite{C-K-S})
Let
\[Q:= (\R^3 \setminus \{0\}) \times (\R^3 \setminus \{0\}) \times \R^3\]
and   $T(\cdot,\cdot,\cdot) : Q \rightarrow \R^3$ be a function defined by
\begin{align}\label{eqn:1con}
T(x_1,x_2,v) =\left \{
\begin{aligned}
&\frac{\|x_2\|}{\|x_1\|}\psi \Big( \Big\| \frac{x_1}{\|x_1\|} - \frac{x_2}{\|x_2\|}\Big\| \Big) R_{\small \frac{x_2}{\|x_2\|}\shortrightarrow \frac{x_1}{\|x_1\|}}(v),\quad  &\hbox{ if }  \frac{x_1}{\|x_1\|} + \frac{x_2}{\|x_2\|} \neq 0,\\
&0,\quad  &\hbox{ if } \frac{x_1}{\|x_1\|} + \frac{x_2}{\|x_2\|} = 0.
\end{aligned}\right.
\end{align}
 If $\psi$ satisfies assumptions in Theorem \ref{thm:1}, then $T(\cdot,\cdot,\cdot)$ is locally Lipschitz continuous in $Q$.
\end{lemma}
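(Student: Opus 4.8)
The plan is to reduce the problem to the unit sphere and to isolate the single source of non-smoothness, namely the antipodal set. Since the maps $x\mapsto\|x\|$ and $x\mapsto x/\|x\|$ are real-analytic on $\R^3\setminus\{0\}$ and hence locally Lipschitz, and since $v$ enters $T$ linearly, it suffices to prove that the matrix-valued map
\[
M(u_1,u_2):=\psi(\|u_1-u_2\|)\,R(u_2,u_1),\qquad u_1,u_2\in\bbs^2,
\]
is locally Lipschitz on $\bbs^2\times\bbs^2$, with the convention $M(u_1,u_2)=0$ when $u_1=-u_2$. The full statement then follows because, away from the antipodal set, $T(x_1,x_2,v)=\tfrac{\|x_2\|}{\|x_1\|}\,M(x_1/\|x_1\|,x_2/\|x_2\|)\,v$ is a composition and product of locally Lipschitz and locally bounded maps on $Q$, and the two definitions agree at antipodal configurations.

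I would first split $R(u_2,u_1)$ into its polynomial part $\langle u_1,u_2\rangle I-u_2u_1^T+u_1u_2^T$ and its singular part $(1-\langle u_1,u_2\rangle)\,nn^T$, where $n=(u_1\times u_2)/\|u_1\times u_2\|$. Write $a=\langle u_1,u_2\rangle$, $s=\|u_1-u_2\|$, and $w=u_1\times u_2$. The polynomial part is smooth in $(u_1,u_2)$, and $\psi(s)$ is locally Lipschitz because $\psi\in C^1$ and $s$ is Lipschitz; hence their product is locally Lipschitz. Away from the antipodal set the direction $n$ is smooth as well, so there $M$ is locally Lipschitz by the same reasoning. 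The only genuine difficulty is a neighborhood of a point with $u_1=-u_2$, i.e. $a=-1$, $s=2$, where $n$ has a discontinuous direction.

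The crux is the identity $\|w\|^2=1-a^2=(1-a)(1+a)$, which rewrites the singular part as $(1-a)nn^T=\frac{ww^T}{1+a}=:S(u_1,u_2)$ (extended by $0$ at $a=-1$), thereby eliminating the discontinuous unit vector $n$. Near the antipodal set I would exploit two facts. First, since $\psi(2)=0$ and $\psi\in C^1$, the mean value theorem gives $|\psi(s)|\le\|\psi'\|_\infty(2-s)\le\|\psi'\|_\infty(1+a)$. Second, for two nearby configurations (tildes denoting the second) one has the elementary identity
\[
(1+a)(S-\tilde S)=(ww^T-\tilde w\tilde w^T)+\frac{\tilde a-a}{1+\tilde a}\,\tilde w\tilde w^T .
\]
Bounding $\|ww^T-\tilde w\tilde w^T\|\le(\|w\|+\|\tilde w\|)\|w-\tilde w\|$ and $\|\tilde w\tilde w^T\|/(1+\tilde a)=1-\tilde a\le 2$, and using that $w$ and $a$ are smooth in $(u_1,u_2)$, yields $(1+a)\|S-\tilde S\|\le C\delta$, where $\delta$ is the distance between the configurations. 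Combined with $|\psi(s)|\le\|\psi'\|_\infty(1+a)$ this controls $|\psi(s)|\,\|S-\tilde S\|$, while the complementary term $|\psi(s)-\psi(\tilde s)|\,\|\tilde S\|\le 2\|\psi'\|_\infty\,|s-\tilde s|\le C\delta$ is immediate. Adding them gives $\|\psi(s)S-\psi(\tilde s)\tilde S\|\le C\delta$, which is local Lipschitz continuity near the antipodal set; note that only $\psi(2)=0$ and $\psi\in C^1$ are used.

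The main obstacle is exactly this last estimate. Because $\psi$ is only $C^1$, the scalar factor $\psi(s)/(1+a)$ is continuous but a priori not Lipschitz, so one cannot naively differentiate it or split $\psi(s)S$ into separately Lipschitz factors. The resolution is to never separate $\psi(s)$ from the vanishing quantity $1+a$: one keeps the product $\psi(s)S$ together so that the prefactor $1+a$ multiplying the discontinuous direction is absorbed by the first-order vanishing of $\psi$ at $s=2$. I would finally check the degenerate boundary cases (one or both configurations exactly antipodal, or $u_1=u_2$ where $n$ is undefined but $S=0$) directly, interpreting the displayed identity as a limit; these are routine once the generic estimate is in hand.
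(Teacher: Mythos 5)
Your proof is correct: the reduction to unit vectors, the rewriting of the singular part as $(1-a)\,nn^T = ww^T/(1+a)$ (which also absorbs the $u_1=u_2$ case, since $S=0$ there), the bound $|\psi(s)|\le\|\psi'\|_\infty(2-s)\le\|\psi'\|_\infty(1+a)$, and the identity $(1+a)(S-\tilde S)=(ww^T-\tilde w\tilde w^T)+\tfrac{\tilde a-a}{1+\tilde a}\tilde w\tilde w^T$ all check out, and keeping $\psi(s)$ coupled to the factor $1+a$ is exactly what makes the product $\psi(s)S$ Lipschitz across the antipodal set, after which the passage from $\bbs^2\times\bbs^2$ back to $Q$ is routine. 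Note that this paper does not itself prove the lemma but quotes it from Lemma 3.5 of \cite{C-K-S}; your argument rests on the same essential mechanism used there (the first-order vanishing of $\psi$ at distance $2$ neutralizing the singularity of the projection term near antipodal configurations), so the two approaches coincide in substance.
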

\vspace{1em}

\begin{proof}[\bf Proof of Theorem \ref{thm:1}:] The case of $\sigma_r=0$ is similar to the case of $\sigma_r>0$. We only prove this theorem for the case of $\sigma_r>0$.   We follow \cite{C-K-S} for the local existence.

We consider the following system of ODEs subject to the initial data $\{(x_i(0),v_i(0))\}_{i=1}^N$ satisfying the admissible conditions in \eqref{eqn:ini} and $x_i(0)\ne x_j(0)$ for any $i,j\in \{1,\ldots,N\}$:
\begin{align}
\begin{aligned}\label{eqn:wel11}
\dot{x}_i&=v_i,\\
\dot{v}_i&=-\frac{\|v_i\|^2}{\|x_i\|^2}x_i+\sum_{j=1}^N\frac{1}{N}\big(T(x_i, x_j,v_j)-\overline{\psi}_{ij}v_i\big)+\sum_{j=1,j\ne i}^N \frac{\sigma_{ij} }{N}\big(\|x_i\|^2x_j - \langle x_i,x_j \rangle  x_i\big),
\end{aligned}
\end{align}
where $T(\cdot, \cdot,\cdot)$ is given in \eqref{eqn:1con} and
\[\overline{\psi}_{ij}= \psi \left( \Big\| \frac{x_i}{\|x_i\|} - \frac{x_j}{\|x_j\|} \Big\|\right).\]
By Lemma~\ref{lem:con}, the right-hand side of \eqref{eqn:wel11} is Lipschitz continuous with respect to $\{(x_i,v_i)\}_{i=1}^N$ in a small neighborhood of $\{(x_i(0),v_i(0))\}_{i=1}^N$ in $\R^{6N}$. By the standard theory of ODEs, a local-in-time solution $Z(t)=\{(x_i(t),v_i(t))\}_{i=1}^N$ of \eqref{eqn:wel11} exists.

By the same argument in the proof of  Lemma \ref{lemma 1.1}, we have
\begin{align*}\begin{aligned}
\langle \dot{v}_i, x_i\rangle &=-\|v_i\|^2+\sum_{j=1}^N\frac{\overline{\psi}_{ij}}{N}\left(\frac{\|x_j\|}{\|x_i\|}\Big\langle R_{\frac{x_j}{\|x_j\|}\shortrightarrow \frac{x_i}{\|x_i\|}}(v_j), x_i\Big\rangle-\langle v_i, x_i\rangle\right)
\\
&= -\|v_i\|^2+\sum_{j=1}^N\frac{\overline{\psi}_{ij}}{N}\big(\left\langle v_j, x_j\right\rangle-\langle v_i, x_i\rangle\big)
.
\end{aligned}
\end{align*}
Therefore,
\begin{align*}
\frac{d}{dt}\sum_{i=1}^N|\langle v_i,x_i\rangle|^2
&=  2\sum_{i=1}^N\big(\langle \dot{v}_i, x_i\rangle +\|v_i\|^2\big)~\langle v_i,x_i\rangle
\\
&= 2\sum_{i=1}^N \sum_{j=1}^N\frac{\overline\psi_{ij}}{N}\big(\langle v_j, x_j\rangle-\langle v_i, x_i\rangle\big)~\langle v_i,x_i\rangle
\\
&\leq  2\sum_{i=1}^N \sum_{j=1}^N\frac{\overline\psi_{ij}}{N}\langle v_j, x_j\rangle~\langle v_i,x_i\rangle.
\end{align*}
Similar to the proof of  Lemma \ref{lemma 1.1}, we obtain
\begin{align*}\sum_{i=1}^N |\langle v_i(t),x_i(t)\rangle|^2\equiv0,~\mbox{for}~ t>0.\end{align*}
Thus, the admissible initial  conditions imply that
\begin{align*}\langle v_i(t),x_i(t)\rangle=0 \hbox{ and } \quad \|x_i(t)\|= 1,\end{align*}
and   $Z(t)=\{(x_i(t),v_i(t))\}_{i=1}^N$ is also a solution of \eqref{main}.

To prove the existence of the global-in-time solution to \eqref{main} and its uniqueness, we consider  the maximal interval $I_M=[0,t_{M})$ of existence of a solution to \eqref{main}. From the energy inequality in Proposition~\ref{prop 2.4},
\[\E_K(V(t))\leq \E(Z(t))\leq \E(Z(0)).\]
Thus,
\[\|v_i(t)\|^2\leq 2N\E(Z(0)),\quad \mbox{ for all }\quad i\in\{1,\ldots,N\}.\] Similarly, we have
\[\E_C(X(t))\leq \E(Z(t))\leq \E(Z(0))\] and this implies that  there is $l>0$ such that
 $\{(x_i,v_i)\}_{i=1}^N$ are uniformly bounded and
\[\|x_i(t)-x_j(t)\|>l.\]
This yields that  the local solution on $I_M=[0,t_{M})$ can extend more and we conclude that
 \[t_{M} =  \infty.\]
\end{proof}

\section{Invariance principle and asymptotic behavior of the solution}% to \eqref{main}}
\label{sec3}
\setcounter{equation}{0}
In this section, we consider the asymptotic behavior of the solution $Z(t)=\{(x_i(t),v_i(t))\}_{i=1}^N$ to \eqref{main}. The main idea for the proof of Theorem \ref{thm:2} in Section \ref{sec1} is to use LaSalle's invariance principle and classification of the limit cycles in the maximal invariance set. For this, we use the energy functional $\E$ defined in the previous section. To use LaSalle's invariance principle, we need a functional with a negative semi-definite orbital derivative and a corresponding positively compact invariant set $\Omega_M$ consisting phase points $\{(x_i,v_i)\}_{i=1}^N$, where the orbital derivative of the  functional on $\Omega_M$ is zero. The largest invariant set contains the $\omega$-limit set of the ensemble. We can prove that it is a velocity-aligned set. This can be obtained by combining the geometric properties of the spherical surface and the rotation operator $R$. The concept of  the largest invariant set and LaSalle's invariance principle are described as follows.

\begin{definition}\cite{K-G}
For   a given domain $\mathcal{D}$, let $Z(t)\in \mathcal{D}$ be the solution to an autonomous system
\begin{align}\label{system}
  \dot Z=f(Z).
\end{align}
For a given functional $\mathcal{V}(\cdot)$, the derivative of $\mathcal{V}$ along the trajectory of \eqref{system} is defined by
\[\dot{\mathcal{V}}(Z)=  \langle \nabla \mathcal{V}, f(Z)\rangle. \]
The set $\Omega\subset \mathcal{D}$ is said to be a positively invariant set if   $Z_0\in \Omega$ and $t_0\in\bbr$, the following holds.
\[Z(t)\in \Omega, \quad t\geq t_0, \]
where $Z(t)$ is the solution to $\dot{Z}=f(Z)$  subject to $Z(t_0)=Z_0$.
\end{definition}

\begin{theorem}\label{LITHM}[LaSalle's invariance principle] Let $Z(t)$ be the solution to \eqref{system}
 with a locally Lipschitz function $f$ and let $\Omega\subset \mathcal{D}$ be a  positively invariant compact set  with respect to \eqref{system}. Assume that there is a continuously differentiable functional $\mathcal{V}(\cdot)$ defined in the  domain $\mathcal{D}$ and   it satisfies
\[\dot{\mathcal{V}}(Z)\leq 0,\quad Z\in \Omega.\]
Let $E$ be the set of all points satisfying $\dot{\mathcal{V}}(Z)=0$ and $M$ be the largest invariant set in $E$.
  Then $Z(t)$ starting in $\Omega$  tends to the largest positively invariant set $M$ as $t$ goes to $\infty$.
\end{theorem}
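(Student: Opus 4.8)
The plan is to argue via the $\omega$-limit set of a forward trajectory starting in $\Omega$ and to show that this set is contained in $M$, then invoke convergence of the trajectory to its $\omega$-limit set. First I would observe that since $\Omega$ is compact and positively invariant and $f$ is locally Lipschitz, any solution $Z(t)$ with $Z(0)\in\Omega$ remains in $\Omega$ for all $t\geq 0$; confinement to a compact set rules out finite-time blow-up, so the solution exists globally in forward time. Because $\dot{\mathcal{V}}(Z(t))\leq 0$ along the trajectory, the map $t\mapsto \mathcal{V}(Z(t))$ is nonincreasing, and since $\mathcal{V}$ is continuous on the compact set $\Omega$ it is bounded below there. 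A bounded monotone function converges, so $\mathcal{V}(Z(t))\to c$ for some constant $c$ as $t\to\infty$.

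Next I would introduce the $\omega$-limit set $\omega^+:=\{p : Z(t_n)\to p \text{ for some } t_n\to\infty\}$ and establish its standard structural properties for a precompact forward orbit: $\omega^+$ is nonempty, compact, invariant, and $\mathrm{dist}(Z(t),\omega^+)\to 0$ as $t\to\infty$. Nonemptiness and the distance-convergence follow directly from the compactness of $\Omega\supset\{Z(t):t\geq 0\}$ together with a standard contradiction argument; the genuinely dynamical fact is \emph{invariance} of $\omega^+$, which is where continuous dependence on initial data (guaranteed by the local Lipschitz hypothesis on $f$) enters.

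I would then identify the value of $\mathcal{V}$ on $\omega^+$. By continuity of $\mathcal{V}$ and the definition of the limit $c$, every $p\in\omega^+$ satisfies $\mathcal{V}(p)=\lim_n \mathcal{V}(Z(t_n))=c$, so $\mathcal{V}\equiv c$ on $\omega^+$. Since $\omega^+$ is invariant, the solution through any point of $\omega^+$ stays in $\omega^+$, where $\mathcal{V}$ is constant; differentiating along such a trajectory gives $\dot{\mathcal{V}}\equiv 0$ on $\omega^+$, whence $\omega^+\subset E$. As $\omega^+$ is an invariant subset of $E$ and $M$ is by definition the largest such set, we conclude $\omega^+\subset M$. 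Combining this with $\mathrm{dist}(Z(t),\omega^+)\to 0$ yields $\mathrm{dist}(Z(t),M)\to 0$, i.e. $Z(t)$ tends to $M$ as $t\to\infty$.

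The main obstacle is the verification that $\omega^+$ is invariant, the single place requiring real dynamical-systems input: given $p\in\omega^+$ with $Z(t_n)\to p$, one must show the entire trajectory through $p$ lies in $\omega^+$, which is obtained by passing to the limit in the flow using continuous dependence of solutions on initial conditions, together with the uniform bounds furnished by $\Omega$. Everything else reduces to monotone convergence of $\mathcal{V}(Z(t))$ and routine continuity bookkeeping.
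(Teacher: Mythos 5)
Your proof is correct, but note that the paper does not actually prove this theorem itself---its ``proof'' consists of the single line ``For the proof of this theorem, see \cite{K-G}''---and your argument is precisely the standard one found in that reference: global forward existence from compactness and positive invariance, monotone convergence of $\mathcal{V}(Z(t))$ to a limit $c$, the structural properties of the $\omega$-limit set (nonempty, compact, invariant, and approached by the trajectory, with invariance resting on continuous dependence on initial data), constancy $\mathcal{V}\equiv c$ on $\omega^{+}$ forcing $\dot{\mathcal{V}}\equiv 0$ there, hence $\omega^{+}\subset E$, and maximality of $M$ giving $\omega^{+}\subset M$ and therefore $\mathrm{dist}(Z(t),M)\to 0$. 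So your proposal is correct and takes essentially the same (classical) route as the proof the paper points to, including correctly isolating the invariance of the $\omega$-limit set as the one step requiring genuine dynamical input.
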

\begin{proof}
For the proof of this theorem, see \cite{K-G}.
\end{proof}
We next define the corresponding positively invariant compact set of \eqref{main}:
\begin{definition}
For a given $\E_0>0$,  we define a set
\[\Omega_{\E_0}=\Big\{Z=\{(x_i,v_i)\}_{i=1}^N:  \E(Z)\leq \E_0 \Big\}\subset ( T\bbs^{2})^N.\]
\end{definition}
By the definition of $\E$ and  Proposition \ref{prop 2.4},  $\Omega_{\E_0}$ is a positively invariant  compact set  with respect to  \eqref{main}. Moreover, Proposition \ref{prop 2.4} implies that
\[\dot{\E}=-\sum_{i,j=1}^N\frac{\psi(\|x_i-x_j\|)}{2N^2}\| R_{x_j \shortrightarrow x_i}(v_j)-v_i\|^2\leq 0.\]
Therefore, we directly apply LaSalle's invariance principle to obtain the following proposition.
\begin{proposition}\label{prop 3.2}Let $\{(x_i(t),v_i(t))\}_{i=1}^N $ be a solution to \eqref{main}  with the admissible conditions in \eqref{eqn:ini}. We assume that  $\psi_{ij}$  satisfies  \eqref{eqn:psi_ij}-\eqref{eqn:psi}
 and $\sigma_{ij}$ is defined by \eqref{sigma}-\eqref{sigma_def}.

If the  initial energy is $\E_0$, the all $\omega$-limit set of the solution  $\{(x_i(t),v_i(t))\}_{i=1}^N$ are contained in the largest positively invariant set  of $\Omega_{\E_0}$ satisfying
\[\dot\E=-\sum_{i,j=1}^N\frac{\psi(\|x_i-x_j\|)}{2N^2}\| R_{x_j \shortrightarrow x_i}(v_j)-v_i\|^2=0.\]
\end{proposition}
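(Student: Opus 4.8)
The plan is to apply LaSalle's invariance principle (Theorem \ref{LITHM}) directly, taking the energy functional $\E$ as the Lyapunov functional $\mathcal{V}$ and $\Omega_{\E_0}$ as the positively invariant compact set. To invoke Theorem \ref{LITHM} I must verify its three hypotheses on a suitable domain $\mathcal{D}$: (i) the vector field $f$ defining the autonomous system $\dot Z = f(Z)$ associated with \eqref{main} is locally Lipschitz on $\mathcal{D}$; (ii) $\Omega_{\E_0}$ is a positively invariant compact subset of $\mathcal{D}$; and (iii) $\E$ is continuously differentiable with $\dot\E \le 0$ on $\Omega_{\E_0}$. Once these are in place, Theorem \ref{LITHM} yields convergence of the trajectory to the largest invariant set $M$ contained in $E=\{Z : \dot\E(Z)=0\}$, and since the $\omega$-limit set of a bounded trajectory is always contained in this attracting set, the conclusion follows after identifying $E$ with the stated zero set of the dissipation functional.

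Hypothesis (iii) is already supplied by Proposition \ref{prop 2.4}, whose dissipation identity
\[\dot\E = -\sum_{i,j=1}^N\frac{\psi(\|x_i-x_j\|)}{2N^2}\| R_{x_j \shortrightarrow x_i}(v_j)-v_i\|^2 \le 0\]
is exactly the negative semi-definite orbital derivative required, and which simultaneously identifies $E$ with the zero set appearing in the statement. For hypothesis (i) I would rewrite \eqref{main} in the regularized form \eqref{eqn:wel11} used in the proof of Theorem \ref{thm:1}: by Lemma \ref{lem:con} the flocking part $T(\cdot,\cdot,\cdot)$ is locally Lipschitz on $Q$, while the centripetal term $-\|v_i\|^2 x_i/\|x_i\|^2$ and the cooperative term are smooth on $\bbs^2$ away from the coincidence configurations, so $f$ is locally Lipschitz on the domain $\mathcal{D}$ obtained by deleting the set $\{x_i=x_j,\ i\ne j\}$ when $\sigma_r>0$ (no deletion is needed when $\sigma_r=0$).

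The main obstacle is hypothesis (ii), namely showing that $\Omega_{\E_0}$ is a genuinely compact subset of $\mathcal{D}$ and not merely a closed sublevel set. Positive invariance is immediate from $\dot\E\le0$, which gives $\E(Z(t))\le\E(Z(0))\le\E_0$ along the flow. For boundedness, the positions lie in the compact product $(\bbs^2)^N$ and the bound $\E_K(V)\le\E_0$ forces $\|v_i\|^2\le 2N\E_0$. The delicate point is that when $\sigma_r>0$ the cooperative potential $\E_C$ contains the term $-\frac{\sigma_r}{4N^2}\log\prod_{i\ne j}\|x_i-x_j\|^2$, which diverges to $+\infty$ as any two agents collide; consequently the bound $\E_C(X)\le\E_0$ keeps $\|x_i-x_j\|$ uniformly bounded below, exactly as in the global-existence argument of Theorem \ref{thm:1}. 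Hence $\Omega_{\E_0}$ is closed, bounded, and stays a definite distance from the singular set, so it is a compact subset of $\mathcal{D}$ on which $f$ is Lipschitz.

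With all three hypotheses verified and $Z(0)\in\Omega_{\E_0}$ since the initial energy equals $\E_0$, Theorem \ref{LITHM} applies and shows that the solution starting in $\Omega_{\E_0}$ converges to the largest invariant set contained in $E$. Because $\omega$-limit sets of bounded trajectories lie in this set, the $\omega$-limit set of $\{(x_i(t),v_i(t))\}_{i=1}^N$ is contained in the largest positively invariant subset of $\Omega_{\E_0}$ on which $\dot\E=0$, which is precisely the assertion of the proposition.
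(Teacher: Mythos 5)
Your proposal is correct and follows essentially the same route as the paper: the paper also proves this by directly applying LaSalle's invariance principle (Theorem \ref{LITHM}) with $\E$ as the Lyapunov functional and $\Omega_{\E_0}$ as the positively invariant compact set, citing Proposition \ref{prop 2.4} for the negative semi-definite orbital derivative. The only difference is one of detail: the paper simply asserts that $\Omega_{\E_0}$ is compact and positively invariant, whereas you spell out the verification (the kinetic energy bound on velocities, the logarithmic barrier keeping agents separated when $\sigma_r>0$, and local Lipschitz continuity of the vector field via Lemma \ref{lem:con}).
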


We next present characteristics of the largest positively invariant set  of $\Omega_{\E_0}$. For this, the geometric properties of the unit sphere and  the rotation operator are crucially used. We start with a specially designed two-agents system.
\begin{proposition}\label{prop 3.3}We assume that $\psi(\cdot)$ is a nonnegative bounded function satisfying \eqref{eqn:psi} and $\sigma(\cdot)$ is defined by \eqref{sigma_def}.
Let $\{(x_1(t),v_1(t)),(x_2(t),v_2(t))\}$ be the solution to
\begin{align}
\begin{aligned}\label{two_eq}
\dot{x}_1&=v_1,\\
\dot{v}_1&=-\frac{\|v_1\|^2}{\|x_1\|^2}x_1+  \frac{n\sigma(\|x_1-x_2\|^2) }{N}(\|x_1\|^2x_2 - \langle x_1,x_2 \rangle  x_1),\\
\dot{x}_2&=v_2,\\
\dot{v}_2&=-\frac{\|v_2\|^2}{\|x_2\|^2}x_2+  \frac{(N-n)\sigma(\|x_1-x_2\|^2) }{N}(\|x_2\|^2x_1 - \langle x_1,x_2 \rangle  x_2),
\end{aligned}
\end{align}
where $N$ and $n$ are natural numbers with $n\leq N$.
If  $x_1(t)\ne x_2(t)$, $x_1(t)\ne -x_2(t)$ and
\begin{align}\label{eq vel_align}
0=\psi(\|x_1(t)-x_2(t)\|)\| R_{x_2(t) \shortrightarrow x_1(t)}(v_2(t))-v_1(t)\|^2
\end{align} for all $t\geq 0$,  then the solution satisfies one of the followings:
\begin{itemize}
\item[(1)]the two nodes move around  the same great circle at the same constant speed. Their relative position is   antipodal or $\sigma(\|x_1-x_2\|^2)=0$ holds,
\item[(2)]The trajectories of the two nodes form two parallel circles that have the same radius  with distance $\displaystyle 2\sin\frac{\theta}{2}$, where $\theta$ is a constant between $0$ and $\pi$. Moreover, their  speed is
\[\sqrt{\frac{-\sigma(\|x_1-x_2\|^2)}{2}(1+\cos \theta)}.\]

\end{itemize}
\end{proposition}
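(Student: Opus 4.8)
The plan is to convert the alignment hypothesis into a pointwise relation between the two velocities, extract two frozen quantities (the mutual angle and the common speed), and then read off the two geometric configurations from the rigidity these impose.

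First I would exploit the non-antipodality. Since $x_1(t)\neq -x_2(t)$ we have $\|x_1-x_2\|<2$, so $\psi(\|x_1-x_2\|)>0$ by \eqref{eqn:psi}; hence \eqref{eq vel_align} forces $R_{x_2\shortrightarrow x_1}(v_2)=v_1$ for all $t$. Because $R_{x_2\shortrightarrow x_1}$ is an isometry (Lemma \ref{lemma 2.3}, $R^T\circ R=I_{\bbs^2}$), this already yields equal speeds $\|v_1\|=\|v_2\|=:s$. It is convenient to decompose each $v_i$ in the moving orthonormal frame $\{\mathbf t_i,\mathbf n\}$, where $\mathbf n=\frac{x_1\times x_2}{|x_1\times x_2|}$ and $\mathbf t_i=\mathbf n\times x_i$; using $R_{x_2\shortrightarrow x_1}(\mathbf t_2)=\mathbf t_1$ and $R_{x_2\shortrightarrow x_1}(\mathbf n)=\mathbf n$ from Lemma \ref{lemma 2.3}, the relation $R_{x_2\shortrightarrow x_1}(v_2)=v_1$ is equivalent to $v_1$ and $v_2$ sharing the same tangential and normal coefficients $a,b$.

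Next I would show the mutual angle is frozen. From $\langle v_1,x_2\rangle=\langle v_2,R_{x_2\shortrightarrow x_1}^{T}x_2\rangle=\langle v_2,R_{x_1\shortrightarrow x_2}(x_2)\rangle=\langle v_2,2\langle x_1,x_2\rangle x_2-x_1\rangle=-\langle x_1,v_2\rangle$, where I used Lemma \ref{lemma 2.3} and $\langle v_2,x_2\rangle=0$, it follows that $\frac{d}{dt}\langle x_1,x_2\rangle=\langle v_1,x_2\rangle+\langle x_1,v_2\rangle=0$. Thus $\theta$, hence $\|x_1-x_2\|$ and $\sigma_\ast:=\sigma(\|x_1-x_2\|^2)$, are constant. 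Differentiating the (equal) speeds and discarding the centripetal terms via $\langle x_i,v_i\rangle=0$ gives $\tfrac12\frac{d}{dt}\|v_1\|^2=\frac{n\sigma_\ast}{N}\langle x_2,v_1\rangle$ and $\tfrac12\frac{d}{dt}\|v_2\|^2=\frac{(N-n)\sigma_\ast}{N}\langle x_1,v_2\rangle$; equating them and using $\langle x_2,v_1\rangle=-\langle x_1,v_2\rangle$ yields $N\sigma_\ast\langle x_1,v_2\rangle=0$, so either $\sigma_\ast=0$ or $\langle x_1,v_2\rangle=0$ (equivalently $a=0$).

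Then I would split into the two branches. If $\langle x_1,v_2\rangle=0$, then $v_1,v_2\perp\mathrm{span}(x_1,x_2)$, so both are parallel to $\mathbf n$ and alignment forces $v_1=v_2$; hence $x_1-x_2$ is a constant vector $\mathbf d$, while $\mathbf c=\tfrac12(x_1+x_2)$ satisfies $\langle \mathbf c,\mathbf d\rangle=0$ with constant norm, so $\mathbf c$—and thus each $x_i$—traces a circle of radius $\cos(\theta/2)$ in a plane orthogonal to $\mathbf d$. These are parallel circles of equal radius at distance $\|\mathbf d\|=\|x_1-x_2\|=2\sin(\theta/2)$; matching the centripetal acceleration $\ddot{\mathbf c}=-\tfrac{s^2}{2\cos^2(\theta/2)}(x_1+x_2)$ against $\dot v_i$ from \eqref{two_eq} and comparing coefficients of the independent vectors $x_1,x_2$ pins the speed to $s=\sqrt{-\tfrac{\sigma_\ast}{2}(1+\cos\theta)}$, giving configuration (2) (the comparison simultaneously forces $n=N/2$ when $\sigma_\ast\neq0$). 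If instead $\sigma_\ast=0$, each agent obeys the geodesic equation $\dot v_i=-s^2x_i$, so the pole $\mathbf N_i=x_i\times v_i/s$ is conserved; I would then combine the constancy of $\mathbf N_1-\mathbf N_2=-\tfrac bs(\mathbf t_1-\mathbf t_2)$ with $\frac{d}{dt}(x_1-x_2)=a(\mathbf t_1-\mathbf t_2)$ and the boundedness of $x_1-x_2$ to conclude $b=0$, i.e.\ a common fixed great circle, which is configuration (1). The main obstacle is precisely this last geometric rigidity: keeping the moving frame $\{\mathbf t_1,\mathbf t_2,\mathbf n\}$ under control and ruling out ``tilted'' pairs of distinct great circles, where the observation that $x_1-x_2$ would otherwise translate at a constant nonzero velocity (impossible on the sphere) is the decisive input; the speed identity in the parallel-circle branch is then only a routine coefficient comparison.
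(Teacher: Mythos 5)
Your proposal is correct, and it reaches the classification by a genuinely different mechanism than the paper. The paper uses the same moving frame (after normalization, $v_1=(0,a,b)^T$, $v_2=(-a\sin\theta,a\cos\theta,b)^T$) but gets its constraints by differentiating the alignment identity in time, $\frac{d}{dt}\bigl[R_{x_2\shortrightarrow x_1}(v_2)-v_1\bigr]=0$, which forces an explicit computation of $\frac{d}{dt}R(x_2(t),x_1(t))$ (its equations (3.6)--(3.10)); adding that matrix derivative term to $R_{x_2\shortrightarrow x_1}(\dot v_2)-\dot v_1$ yields, in a single identity, $2ab\tan(\theta/2)=0$ and $b^2=-\tfrac{\sigma_{12}}{2}(1+\cos\theta)$. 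You never differentiate the rotation operator: you derive $\frac{d}{dt}\langle x_1,x_2\rangle=0$ (so $\theta$ and $\sigma_\ast$ are frozen) and $\sigma_\ast\langle x_1,v_2\rangle=0$ from equality of the speeds and of their derivatives, and then recover the speed law by comparing coefficients of the independent vectors $x_1,x_2$ in $\dot v_1=\dot v_2$ (parallel-circle branch) or via the conserved geodesic poles plus boundedness of $x_1-x_2$ (great-circle branch). The paper's route is shorter once the derivative of $R$ is accepted and delivers both constraints at once; your route is more elementary, makes explicit the constancy of $\theta$, $a$, $b$ along the whole trajectory (a point the paper leaves implicit when passing from an instantaneous identity at $t=0$ to statements about trajectories), and detects the extra necessary condition $n=N/2$ for nonstationary case (2), which the paper's identity cannot see because the $n$-dependence cancels in $R_{x_2\shortrightarrow x_1}(\dot v_2)-\dot v_1$. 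One caveat: in your $\sigma_\ast=0$ branch, the boundedness argument rules out $b\neq 0$ only when $a\neq 0$; if $a=0$, then $x_1-x_2$ is constant and nothing drifts. That sub-case must instead be closed by your first branch, where $a=0$ and $\sigma_\ast=0$ give $\dot v_1=\dot v_2$, hence $b^2(x_1-x_2)=0$ and $b=0$; since your two branches overlap exactly there, the proof is complete as organized, but this should be said explicitly.
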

\begin{proof}
%Assume that there is $t_0\geq 0$ such that $x_1(t_0)$ and $x_2(t_0)$ are antipodal. We claim that $v_1(t_0)=-v_2(t_0)$. We use proof by contradiction. We assume that $v_1(t_0)\ne -v_2(t_0)$. By the orthogonality of the position and its velocity, this implies that
%\[\|v_1(t_0)\|\ne \|v_2(t_0)\|\quad \mbox{or } \quad v_1(t_0)=v_2(t_0)\ne0.\]
%For the first case, by the continuity of solution, there is  a sufficiently small $\epsilon>0$ such that $x_1(t_0+\epsilon)$ and $x_2(t_0+\epsilon)$ are not antipodal and  $\|v_j(t_0+\epsilon)\|\neq\|v_i(t_0+\epsilon)\|$.  Therefore, $\psi(\|x_1(t_0+\epsilon)-x_2(t_0+\epsilon)\|)\ne 0$ and
%\[\| R_{x_2(t_0+\epsilon) \shortrightarrow x_1(t_0+\epsilon)}(v_2(t_0+\epsilon))-v_1(t_0+\epsilon)\|\ne 0.\]
%This is a contradiction. For the second case, we can obtain a contradiction by similar way. Therefore, we prove that $v_1(t_0)=-v_2(t_0)$. By symmetry and uniqueness of the solution, they rotate on a great circle at the same constant speed. Thus, we can easily check that $(1)$  holds.
%

We assume that $x_1(t)$ and $x_2(t)$ are not antipodal for any $t\geq 0$. Using a coordinate change, it suffices to consider
 \begin{align}\label{eq 3.2}
   x_1(0)=(1,0,0)^T,\quad v_1(0)=(0,a,b)^T
 \end{align}
 and
 \begin{align}\label{eq 3.3}
x_2(0)=(\cos \theta,\sin \theta,0)^T,\quad v_2(0)=(-a \sin\theta,a\cos\theta,b)^T,
\end{align}
where $\theta$ is a constant satisfying $ 0<\theta<\pi$.

Note that $\psi(\|x_1(t)-x_2(t)\|)\ne 0$ for any $t\geq $, since $x_1(t)$ and $x_2(t)$ are not antipodal for any $t\geq 0$.  By the assumption \eqref{eq vel_align},
\begin{align}\label{eq 3.1}
R_{x_2(t) \shortrightarrow x_1(t)}(v_2(t))=v_1(t),\quad R_{x_1(t) \shortrightarrow x_2(t)}(v_1(t))=v_2(t).
\end{align}
 Since $(x_1(t),v_1(t))$ and $(x_2(t),v_2(t))$ satisfy \eqref{eq 3.1} for all $t\geq 0$, we have
 \[\frac{d}{dt}[R_{x_2(t) \shortrightarrow x_1(t)}(v_2(t))-v_1(t)]=0.\]
Therefore, we have
 \begin{align}\label{eq 3.5}\frac{d R_{x_2(t) \shortrightarrow x_1(t)}}{dt}(v_2(t))+R_{x_2(t) \shortrightarrow x_1(t)}(\dot v_2(t))-\dot v_1(t)=0.\end{align}

We use \eqref{two_eq} and \eqref{eq 3.2}-\eqref{eq 3.1} to obtain that
\begin{align}\label{eq dv1}
\dot v_1(0)=-(a^2+b^2)(1,0,0)^T+ \frac{n}{N}\sigma(\|x_1(0)-x_2(0)\|^2)(0,\sin\theta,0)^T
\end{align}
and
\begin{align}\label{eq dv2}
\dot v_2(0)=-(a^2+b^2)(\cos\theta,\sin\theta,0)^T+ \frac{N-n}{N}\sigma(\|x_1(0)-x_2(0)\|^2)(1-\cos^2\theta,-\sin\theta\cos\theta,0)^T.
\end{align}
By \eqref{eq dv1}-\eqref{eq dv2} and the definition of $R(\cdot,\cdot)$,
\begin{align}\label{eq 3.4}
R_{x_2(0) \shortrightarrow x_1(0)}(\dot v_2(0))-\dot v_1(0)=\left(0,~-\sigma(\|x_1(0)-x_2(0)\|^2)\sin\theta,~0\right)^T.\end{align}
Then, by elementary calculation and the definition of the rotation operator $R$, we have
\begin{align}\begin{aligned}\label{eq 3.6}
\frac{d}{dt}   R(x_2(t),x_1(t))
&=\langle v_2(t) , x_1(t)\rangle I+ \langle x_2(t) , v_1(t)\rangle I
- v_2(t) x_1(t)^T
\\
&\qquad- x_2(t) v_1(t)^T
+ v_1(t) x_2(t)^T
+ x_1(t) v_2(t)^T
\\&\qquad
 + \frac{d}{dt}\left(\frac{1-  \langle x_2(t), x_1(t)\rangle}{|x_2(t) \times x_1(t)|^2}\right) ( x_2(t) \times x_1(t) ) ( x_2(t) \times x_1(t))^T
\\&\qquad
+\left(\frac{1-  \langle x_2(t), x_1(t)\rangle}{|x_2(t) \times x_1(t)|^2}\right) ( v_2(t) \times x_1(t) ) ( x_2(t) \times x_1(t))^T\\&
\qquad
+\left(\frac{1-  \langle x_2(t), x_1(t)\rangle}{|x_2(t) \times x_1(t)|^2}\right) ( x_2(t) \times v_1(t) ) ( x_2(t) \times x_1(t))^T
\\&\qquad
+\left(\frac{1-  \langle x_2(t), x_1(t)\rangle}{|x_2(t) \times x_1(t)|^2}\right) ( x_2(t) \times x_1(t) ) ( v_2(t) \times x_1(t))^T\\&
\qquad
+\left(\frac{1-  \langle x_2(t), x_1(t)\rangle}{|x_2(t) \times x_1(t)|^2}\right) ( x_2(t) \times x_1(t) ) ( x_2(t) \times v_1(t))^T.\end{aligned}
  \end{align}
Clearly, we have
\begin{align}\begin{aligned}\label{eq 3.7}
  \frac{d}{dt}\left(\frac{1-  \langle x_2(t), x_1(t)\rangle}{|x_2(t) \times x_1(t)|^2}\right)&=\frac{-  \langle v_2(t), x_1(t)\rangle-  \langle x_2(t), v_1(t)\rangle}{|x_2(t) \times x_1(t)|^2}\\
  &\quad-\frac{2- 2 \langle x_2(t), x_1(t)\rangle}{|x_2(t) \times x_1(t)|^4}\langle v_2(t) \times x_1(t)+x_2(t) \times v_1(t),x_2(t) \times x_1(t)\rangle .
\end{aligned}\end{align}
  If we use  \eqref{eq 3.2}-\eqref{eq 3.3} and \eqref{eq 3.6}-\eqref{eq 3.7}, then we obtain
  \phantom{\eqref{eq 3.6}\eqref{eq 3.4}}
  \begin{align}\label{eq 3.10}
    \left[\frac{d}{dt}   R(x_2(t),x_1(t))\right]\cdot v_2(t)\bigg|_{t=0}=\left(0,~-2b^2\tan\frac{\theta}{2}, ~2 ab\tan\frac{\theta}{2}\right)^T.
  \end{align}
From \eqref{eq 3.5},\eqref{eq 3.4} and \eqref{eq 3.10}, it follows that
  \[(0,0,0)=\left(0,~-2\Big(b^2+\frac{\sigma_{12}}{2}+\frac{\sigma_{12}}{2}\cos \theta\Big)\tan\frac{\theta}{2},~ 2ab \tan\frac{\theta}{2}    \right),\]
  where $\sigma_{12}=\sigma(\|x_1(0)-x_2(0)\|^2)$. Therefore,  only two cases are possible:
  \[b=0,~ \sigma_{12}=0\]
  or
  \[a=0, ~b=\pm\sqrt{-\frac{\sigma_{12}}{2}(1+\cos \theta)}.\]
From this result, we complete the proof of this proposition.
\end{proof}

The following corollary of Proposition \ref{prop 3.3} gives a special property for the case of $N=2$. From this, it follows that the result in Theorem \ref{thm:2} does not hold for  $N=2$. See Remark \ref{rmk 1.2}. This proposition also will be used in the proof of the main theorem.

\begin{corollary}\label{propN2}We assume that $\psi(\cdot)$ is a nonnegative bounded function satisfying \eqref{eqn:psi} and $\sigma(\cdot)$ is defined by \eqref{sigma_def}. If $N=2$ and $\sigma_r>0$, then the largest positively invariant set $\Omega_M$ consists of the points of the following trajectories.
\begin{itemize}
\item[(1)]Their relative position is   antipodal.
\item[(2)]The two nodes move around  the same great circle at the same constant speed with $\sigma(\|x_1-x_2\|^2)=0$.
\item[(3)]Each node rotates at the speed
\[\sqrt{\frac{-\sigma(\|x_1-x_2\|^2)}{2}(1+\cos \theta)}\]
on one of two parallel circles that have the same radius  with distance $\displaystyle 2\sin\frac{\theta}{2}$,
 where $\theta$ is a constant between $0$ and $\pi$.
\end{itemize}
\end{corollary}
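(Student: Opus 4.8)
The plan is to obtain the corollary by specializing Proposition \ref{prop 3.3} to $N=2$, $n=1$, after using the LaSalle characterization in Proposition \ref{prop 3.2} to reduce the motion on $\Omega_M$ to a velocity-aligned one. First I would invoke Proposition \ref{prop 3.2}: every trajectory contained in the largest positively invariant set $\Omega_M$ satisfies the velocity-alignment identity
\[
\psi(\|x_1(t)-x_2(t)\|)\,\|R_{x_2(t)\shortrightarrow x_1(t)}(v_2(t))-v_1(t)\|^2=0
\]
for all $t$. Since $\sigma_r>0$ and the initial positions are distinct, Theorem \ref{thm:1} guarantees $x_1(t)\ne x_2(t)$ along the whole trajectory, so no collision occurs and only the antipodal configuration can be singular.

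Next I would split into two cases according to whether the agents are antipodal. If $x_1(t)=-x_2(t)$ for all $t$, then $\|x_1-x_2\|=2$, the cooperative term $\|x_1\|^2x_2-\langle x_1,x_2\rangle x_1=x_2+x_1$ vanishes, and, with the flocking operator extended by $0$ at antipodal points as in Lemma \ref{lem:con}, each agent obeys the pure geodesic equation $\dot v_i=-\|v_i\|^2x_i$; this gives great-circle motion at constant speed with antipodal relative position, which is alternative (1). Otherwise there is a time with $x_1\ne -x_2$, and on the open set of such times $\psi(\|x_1-x_2\|)>0$ by \eqref{eqn:psi}, so the alignment identity forces $R_{x_2\shortrightarrow x_1}(v_2)=v_1$, the flocking term in \eqref{main} drops out, and the $N=2$ system reduces exactly to \eqref{two_eq} with $N=2$, $n=1$, while \eqref{eq vel_align} holds.

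I would then apply Proposition \ref{prop 3.3} to this reduced system. Its alternative (1) splits into the antipodal subcase, which merges with alternative (1) of the corollary, and the subcase $\sigma(\|x_1-x_2\|^2)=0$ of same-great-circle motion at constant speed, which is alternative (2); its alternative (2) produces the two parallel circles of equal radius at distance $2\sin\frac{\theta}{2}$ with speed $\sqrt{\frac{-\sigma(\|x_1-x_2\|^2)}{2}(1+\cos\theta)}$, which is alternative (3). Finally I would verify consistency: since both the parallel-circle configuration and the $\sigma=0$ configuration keep a fixed separation strictly below $2$, neither ever reaches an antipodal position, so invoking Proposition \ref{prop 3.3} over the whole trajectory is legitimate.

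The hard part will be the antipodal boundary case. Proposition \ref{prop 3.3} explicitly excludes $x_1=-x_2$, where $R$ is singular, so the reduction must be handled separately there, and I must rule out trajectories that alternate between antipodal and non-antipodal configurations; this is exactly why the constant-separation conclusion of Proposition \ref{prop 3.3} in the non-antipodal regime is needed to close the classification.
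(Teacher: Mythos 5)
Your proposal is correct and follows essentially the route the paper intends: the paper states this result as an immediate consequence of Proposition \ref{prop 3.3} (with the velocity-alignment reduction coming from Proposition \ref{prop 3.2} and the collision exclusion from $\sigma_r>0$), which is exactly your argument. Your explicit treatment of the antipodal boundary case via the extension $T$ of Lemma \ref{lem:con}, and the constant-separation argument ruling out transitions between the antipodal and non-antipodal regimes, are details the paper leaves implicit but are consistent with its framework.
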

The third case in Corollary \ref{propN2} is unstable. We next focus on the case of $N\geq 3$. The following simple lemma plays a crucial role in the proof of the main theorem. The geometric properties of $\mathbb{S}^2$ and the rotation operator give the following rigid motions.
\begin{lemma}\label{lemma 1.5}
 Assume that $\{(x_1,v_1),(x_2,v_2),(x_3,v_3)\}\subset T\bbs^2$ are velocity aligned and their positions are not antipodal to each other,
   i.e,
  \[R_{x_j \shortrightarrow x_i}(v_j)-v_i=0,\quad \mbox{for any ~$i,j\in \{1,2,3\}$.}\]
    Then $v_1=v_2=v_3=0$ or $\{x_1,x_2,x_3\}$ are located on  a great circle.
\end{lemma}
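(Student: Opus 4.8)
The plan is to reduce everything to a fixed-point analysis of the single rotation obtained by composing the three pairwise rotations around the loop $x_1 \to x_2 \to x_3 \to x_1$. Set
\[
P := R_{x_3 \shortrightarrow x_1} \circ R_{x_2 \shortrightarrow x_3} \circ R_{x_1 \shortrightarrow x_2}.
\]
Each factor is a proper rotation (orthogonal by Lemma \ref{lemma 2.3}, and of determinant $1$ by continuity from the diagonal $x_i=x_j$ where $R=I$), so $P\in SO(3)$. Two facts pin $P$ down. First, the chain $R_{x_1\shortrightarrow x_2}(x_1)=x_2$, $R_{x_2\shortrightarrow x_3}(x_2)=x_3$, $R_{x_3\shortrightarrow x_1}(x_3)=x_1$ from Lemma \ref{lemma 2.3} gives $P x_1 = x_1$. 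Second, the velocity-alignment hypotheses $R_{x_1\shortrightarrow x_2}(v_1)=v_2$, $R_{x_2\shortrightarrow x_3}(v_2)=v_3$, $R_{x_3\shortrightarrow x_1}(v_3)=v_1$ give $P v_1 = v_1$.

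Now I would split on whether the velocities vanish. If $v_1=0$, then propagating the alignment identities forward yields $v_2=R_{x_1\shortrightarrow x_2}(v_1)=0$ and $v_3=R_{x_2\shortrightarrow x_3}(v_2)=0$, which is the first alternative. So assume $v_1\ne 0$. Since $v_1\in T_{x_1}\bbs^2$ we have $\langle x_1,v_1\rangle=0$, so $x_1$ and $v_1$ are linearly independent and $P$ fixes the two-dimensional subspace $\mathrm{span}(x_1,v_1)$ pointwise. An element of $SO(3)$ with a two-dimensional fixed space must be the identity, hence $P=I$. The remaining task is therefore to show that $P=I$ forces $x_1,x_2,x_3$ to be linearly dependent in $\bbr^3$, i.e. to lie on a common great circle.

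This last implication is the geometric heart of the argument and the step I expect to be the main obstacle. The clean conceptual reason is that $R_{x_i\shortrightarrow x_j}$ is exactly parallel transport along the minor geodesic arc from $x_i$ to $x_j$ (the non-antipodal hypothesis guarantees this arc is unique and keeps the rotation angle in $(0,\pi)$), so $P$ is the holonomy of parallel transport around the geodesic triangle with vertices $x_1,x_2,x_3$. On the unit sphere the Gaussian curvature is $1$, so by Gauss--Bonnet the holonomy angle equals the enclosed area, which lies strictly in $(0,2\pi)$ for a genuine triangle and vanishes precisely in the degenerate case; thus $P=I$ forces zero area, i.e. collinearity on a great circle. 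Once this is established, $P=I$ in the case $v_1\ne 0$ places the configuration in the second alternative, completing the proof.

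To keep the argument self-contained I would alternatively verify $P=I\Rightarrow \det[\,x_1\mid x_2\mid x_3\,]=0$ by direct computation with the explicit matrix $R(\cdot,\cdot)$, following the coordinate normalization used in the proof of Proposition \ref{prop 3.3}: each factor is the rotation about the axis $x_i\times x_j$ through the angle $\angle(x_i,x_j)$, and one checks that the composite collapses to the identity only when the three axes $x_1\times x_2$, $x_2\times x_3$, $x_3\times x_1$ are parallel, which is exactly coplanarity. The delicate bookkeeping here is orientation together with the use of non-antipodality to exclude spurious $2\pi$-periodic solutions, and this is where I would concentrate the care.
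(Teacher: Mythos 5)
Your proposal is correct and takes essentially the same route as the paper: the paper also forms the composite rotation $R_{x_3\shortrightarrow x_1}\circ R_{x_2\shortrightarrow x_3}\circ R_{x_1\shortrightarrow x_2}$, notes that velocity alignment forces it to fix $v_1$, and rules out a nonzero fixed tangent vector because the exterior-angle sum of a genuine spherical triangle cannot equal $2\pi$ (which is precisely your Gauss--Bonnet/holonomy statement that the transport angle equals the triangle's area, strictly between $0$ and $2\pi$). Your write-up merely makes explicit two steps the paper leaves implicit, namely that $P\in SO(3)$ fixing the plane $\mathrm{span}(x_1,v_1)$ pointwise must be the identity, and that the rotation operator is parallel transport along the minor geodesic arc.
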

\begin{proof}We assume that $\{x_1,x_2,x_3\}$ are not located on a great circle. Then, we can consider the triangle on  $\bbs^2$ with three points $\{x_1,x_2,x_3\}$  as vertices. Their line segment is a part of a great circle connecting  two points of $\{x_1,x_2,x_3\}$. Then the sum of the interior angles of any spherical triangle is strictly greater than $\pi$. We assume that $v_1\ne 0$ and
\[R_{x_1 \shortrightarrow x_2}(v_1)=v_2,\quad R_{x_2 \shortrightarrow x_3}(v_2)=v_3,\quad R_{x_3 \shortrightarrow x_1}(v_3)=v_1.\]
That means
\[R_{x_3 \shortrightarrow x_1}\circ R_{x_2 \shortrightarrow x_3}\circ R_{x_1 \shortrightarrow x_2}(v_1)=v_1.\]
However, the above holds only for $v_1=0$ or  the sum of the exterior angles of the spherical triangle is  $2\pi$. Since the sum of the interior angles of any spherical triangle is strictly greater than $\pi$, the sum of the exterior angles of the spherical triangle is  not $2\pi$. Therefore, we obtained the desired result.
\end{proof}

\begin{lemma}\label{lemma 3.5}
We assume that  $\psi_{ij}$  satisfies  \eqref{eqn:psi_ij}-\eqref{eqn:psi}
 and $\sigma_{ij}$ is defined by \eqref{sigma}-\eqref{sigma_def}.  Let  $M_{\E_0}\subset \Omega_{\E_0}$ be the largest positively invariant set of \eqref{main} satisfying
\begin{align}\label{eq 3.11}
\dot\E=-\sum_{i,j=1}^N\frac{\psi(\|x_i-x_j\|)}{2N^2}\| R_{x_j \shortrightarrow x_i}(v_j)-v_i\|^2=0.
\end{align}
  If $N\geq 3$ and $\sigma_r>0$, then for any particle trajectory $\{(x_i(t),v_i(t))\}_{i=1}^N$ of  \eqref{main} in $M_{\E_0}$,  there is $t_0\geq 0$ such that one of the followings holds.
\begin{itemize}
 \item[(a)] $v_i(t_0)=0$ for all $i\in \{1,\ldots,N\}$ and $\{x_i(t_0)\}_{i=1}^N$ are not deployed on one great circle,
 \item[(b)] $\{x_i(t_0)\}_{i=1}^N$ are deployed on a great circle and their velocities are aligned along the great circle.

 \end{itemize}
\end{lemma}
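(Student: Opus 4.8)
The plan is to use the invariance of $M_{\E_0}$ to upgrade \eqref{eq 3.11} into velocity alignment that holds \emph{for all time}, and then to extract the rigid geometry from Lemma \ref{lemma 1.5}. Since $M_{\E_0}$ is positively invariant and \eqref{eq 3.11} holds on it, every trajectory $\{(x_i,v_i)\}_{i=1}^N$ in $M_{\E_0}$ satisfies $\psi(\|x_i-x_j\|)\,\|R_{x_j\shortrightarrow x_i}(v_j)-v_i\|^2=0$ for all $t\ge 0$ and all $i,j$. By \eqref{eqn:psi}, $\psi>0$ except at the antipodal configuration $\|x_i-x_j\|=2$, so $R_{x_j\shortrightarrow x_i}(v_j)=v_i$ for every non-antipodal pair at every time. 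In particular the flocking sum in \eqref{main} vanishes identically on $M_{\E_0}$, and the dynamics reduce to the pure cooperative system $\dot x_i=v_i$, $\dot v_i=-\|v_i\|^2x_i+\sum_{j\ne i}\frac{\sigma_{ij}}{N}(x_j-\langle x_i,x_j\rangle x_i)$ (compare \eqref{two_eq} in Proposition \ref{prop 3.3}). Throughout I use that $\sigma_r>0$ forces $x_i(t)\ne x_j(t)$ for all $t$ by Theorem \ref{thm:1}, so the positions stay distinct.

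Next I would record a zero/nonzero dichotomy. If $v_i=0$ and $(i,j)$ is non-antipodal, then $v_j=R_{x_i\shortrightarrow x_j}(v_i)=0$ by linearity of $R$ (Lemma \ref{lemma 2.3}); hence a vanishing velocity propagates along non-antipodal pairs. Because each point has at most one antipode among the others, the non-antipodal pairs form the complement of a matching on $N$ vertices, which is connected for $N\ge 3$. Consequently, at each fixed time either all velocities vanish or all are nonzero.

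Suppose first that all velocities are nonzero at some time $t_0$. Fix an index $k$ and let $C$ be the great circle lying in $\mathrm{span}(x_k,v_k)$, which is well defined since $v_k\ne 0$ and $v_k\perp x_k$. For a pairwise non-antipodal triple $\{k,m,l\}$, Lemma \ref{lemma 1.5} places $x_k,x_m,x_l$ on a common great circle $C'$; decomposing the velocities into the tangential and polar directions of $C'$ and using that $R_{x_j\shortrightarrow x_i}$ acts as a rotation about the pole $\mathbf n'$ of $C'$ fixing that pole, velocity alignment forces a common tangential speed and a common polar component $\beta:=\langle v_i,\mathbf n'\rangle$. The crux is then a tangency argument: since the points stay on a smoothly varying great circle with pole $\mathbf n'(t)$ near $t_0$ (again by Lemma \ref{lemma 1.5} and nonvanishing velocities), differentiating $\langle x_i(t),\mathbf n'(t)\rangle\equiv 0$ at $t_0$ yields $\beta+\langle x_i,\dot{\mathbf n}'\rangle=0$ for every $i$; as three distinct points of a circle are never collinear, this linear system forces $\beta=0$. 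Thus each $v_i$ is tangent to $C'$, so $C'$ is the great circle through $x_k$ tangent to $v_k$, namely $C$, and $x_m,x_l\in C$. Letting $m$ range over all indices (the unique antipode of $x_k$, if present, lies on $C$ automatically, and the low-$N$ cases in which no pairwise non-antipodal triple exists are handled directly by the same coplanarity-plus-tangency computation) places every $x_i$ on $C$ with $v_i$ tangent to it, which is exactly case (b).

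If instead all velocities vanish at some time $t_0$, then case (a) holds when $\{x_i(t_0)\}$ do not all lie on one great circle, and otherwise the zero velocities are trivially tangent and case (b) holds. In every scenario one of (a), (b) is realized, and any such time serves as the desired $t_0$. The main obstacle is the bookkeeping around antipodal pairs --- guaranteeing for $N\ge 3$, with up to $\lfloor N/2\rfloor$ antipodal pairs, that every particle can be tied to the single great circle $C$, either through a pairwise non-antipodal triple feeding Lemma \ref{lemma 1.5} or, in the degenerate cases, through the direct coplanarity argument --- together with making the tangency computation rigorous; the remaining content is precisely the spherical rigidity packaged in Lemma \ref{lemma 1.5} and the operator identities of Lemma \ref{lemma 2.3}.
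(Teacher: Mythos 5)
Your proposal is correct, and it reaches the conclusion by a route that genuinely differs from the paper's at the two key steps. First, your zero/nonzero dichotomy---a vanishing velocity propagates across non-antipodal pairs by linearity of $R$, and the non-antipodal graph, being the complement of a matching, is connected for $N\ge 3$---replaces the paper's case-by-case propagation of zero velocities to the remaining agents. Second, and more substantially, your tangency step is different: the paper proves that velocities of agents lying on a common great circle must be tangent to it by a perturbation contradiction (if not aligned, then at $t_0+\epsilon$ the points leave the circle with nonzero, pairwise non-antipodal velocities, violating Lemma \ref{lemma 1.5} and invariance), whereas you differentiate the identity $\langle x_i(t),\mathbf{n}'(t)\rangle\equiv 0$ along the flow and use the fact that three distinct points of a circle are never collinear to force $\dot{\mathbf{n}}'=0$ and hence $\beta=0$. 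Your version is arguably tighter: the paper's assertion that non-tangent velocities push the points off \emph{every} great circle really requires checking that $\frac{d}{dt}\det(x_i,x_j,x_k)\neq 0$ at $t_0$, a computation the paper never does, while your argument needs only that the points remain on \emph{some} smoothly varying great circle near $t_0$, which is exactly what Lemma \ref{lemma 1.5} plus invariance and continuity supply. The one place where the paper works harder than you do is the degenerate configurations admitting no pairwise non-antipodal triple ($N=3$ with an antipodal pair, and $N=4$ with two antipodal pairs), to which the paper devotes separate arguments (the alignment chain \eqref{eq 3.13} and a hemisphere observation); you only gesture at these. Your sketch does complete, though: an antipodal pair together with any third point automatically lies on a great circle, so coplanarity there is free rather than an output of Lemma \ref{lemma 1.5}; invariance plus uniqueness of the great circle through two distinct non-antipodal points keeps all agents on one smoothly varying circle near $t_0$ even when the antipodal pairs break under the flow; and the common polar component $\beta$ chains along the connected non-antipodal graph, after which your collinearity argument applies verbatim. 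Writing out these few lines would make your proof complete, and in my view cleaner than the original's three-way split over $N\geq 5$, $N=4$, $N=3$.
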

\begin{proof}Let
\[\psi_{ij}(t)=\psi(\|x_i(t)-x_j(t)\|).\]
We divide it into the following three cases:
\[N\geq 5, \quad N=4, \quad N=3.\]

For $N\geq 5$,  we take any trajectory  $\{(x_i(t),v_i(t))\}_{i=1}^N$ of \eqref{main} in $M_{\E_0}$. We assume that there is $t_0\in \bbr$ such that  $(x_i(t_0),v_i(t_0))_{i=1}^N$ are not deployed on a great circle. Since  $N\geq 5$, there are  three points  $\{x_i(t_0),x_j(t_0),x_k(t_0)\}$  such that any two points of them are not antipodal. This implies that
\begin{align}\label{eq 3.12}
\psi_{ij}(t_0),\psi_{jk}(t_0), \psi_{ki}(t_0)\ne 0.
\end{align}
By \eqref{eq 3.11}-\eqref{eq 3.12} and Lemma \ref{lemma 1.5},
\begin{align}\label{eq 3.135}
  v_i(t_0)=v_j(t_0)=v_k(t_0)=0
\end{align}
  or $\{x_i(t_0),x_j(t_0),x_k(t_0)\}$ are deployed on a great circle.

If we assume that \eqref{eq 3.135} does not hold, then $\{x_i(t_0),x_j(t_0),x_k(t_0)\}$ are deployed on a great circle. For this case, we claim that   $\{v_i(t_0), v_j(t_0), v_k(t_0)\}$ are aligned along their great circle.
\begin{proof}[The proof of the claim:]
If one of the velocities is a zero vector, then all velocities are zero since they are not antipodal to each other and velocities are aligned. Then we are done.

Next, we consider the case that all velocities are nonzero.   We assume that the velocities are not aligned along the great circle.  Then for sufficiently small $\epsilon>0$,  $\{x_i(t_0+\epsilon),x_j(t_0+\epsilon),x_k(t_0+\epsilon)\}$   does not located on the great circle but $v_i(t_0+\epsilon),v_j(t_0+\epsilon),v_k(t_0+\epsilon)$ are not zero vector by the continuity of the solution. Therefore, any two vectors of $\{x_i(t_0+\epsilon),x_j(t_0+\epsilon),x_k(t_0+\epsilon)\}$ are not antipodal. Therefore,  it does not satisfy \[v_i(t_0+\epsilon)=v_j(t_0+\epsilon)=v_k(t_0+\epsilon)=0\]
  nor $\{x_i(t_0+\epsilon),x_j(t_0+\epsilon),x_k(t_0+\epsilon)\}$ are deployed on a great circle. Thus, it is not invariant by Lemma \ref{lemma 1.5}. Therefore, if three points are deployed on a great circle, then their velocities are aligned along the great circle.
\end{proof}

   Thus, we have proved that one of the following statements holds.
 \begin{itemize}
 \item[(a')] $v_i(t_0)=v_j(t_0)=v_k(t_0)=0$ and $\{x_i(t_0),x_j(t_0),x_k(t_0)\}$ are not deployed on a great circle,
 \item[(b')]$\{x_i(t_0),x_j(t_0),x_k(t_0)\}$ are deployed on a great circle and their velocities are aligned along the great circle.

 \end{itemize}

Next, we consider the rest of the agents $\{(x_l,v_l)\}_{1\leq l\leq N, l\ne i, j,k}$ for both cases $(a')$ and $(b')$ in the above. For (a'), we fixed the $l$th  agent with  position $x_l(t_0)$, $1\leq l\leq N$, $l\ne i, j,k$. Then, at least one of $\{x_i(t_0),x_j(t_0),x_k(t_0)\}$ is not antipodal with $x_l(t_0)$. Thus, by \eqref{eq 3.11}, $v_l(t_0)=0$ for all $l\in \{1,\ldots, N\}$. Therefore, in this case, all velocities are the zero vector. For (b'), we also fix the $l$th particle with $(x_l(t_0),v_l(t_0))$. If $x_l(t_0)$ is located on the great circle containing $\{x_i(t_0),x_j(t_0),x_k(t_0)\}$, then  $v_l(t_0)$ is aligned along the great circle by the same argument in the above.

If there is  an index $l\in \{1,\ldots,N\}-\{i,j,k\}$ such that $x_l(t_0)$ is not located on the great circle  containing $\{x_i(t_0),x_j(t_0),x_k(t_0)\}$, then we can find  three points in $\{x_i(t_0),x_j(t_0),x_k(t_0), x_l(t_0)\}$ and the three points are not antipodal to each other and they are not located on any great circle. Thus, by Lemma \ref{lemma 1.5}, their velocities are all the zero vectors. If there is $t_0\in \bbr$ such that  $\{(x_i(t_0),v_i(t_0))\}_{i=1}^N$ are not deployed on a great circle, then we obtain the result for $N\geq 5$. Otherwise, for all $t\geq 0$, $\{(x_i(t),v_i(t))\}_{i=1}^N\in M_{\E_0}$ are located on the great circle. Similar to the previous case, we can verify that their velocities are aligned along the great circle.

For $N=4$, there is $t_0\in \bbr$ such that  $\{(x_i(t_0),v_i(t_0))\}_{i=1}^N$ are not deployed on a great circle or not. We first assume that there is $t_0\in \bbr$ such that  $\{(x_i(t_0),v_i(t_0))\}_{i=1}^N$ are not deployed on a great circle. Then we can find three position in  $\{(x_i(t_0),v_i(t_0))\}_{i=1}^N$, say
\[S=\{(x_i(t_0),v_i(t_0)),(x_j(t_0),v_j(t_0)),(x_k(t_0),v_k(t_0))\},\]
 such that $S$ are not deployed on a great circle and any two agents in $S$  are not antipodal. Then by the same argument in the case of $N\geq 5$, we obtain the result.

 The remaining case is that four points $\{(x_i(t_0),v_i(t_0))\}_{i=1}^N$ are located on a great circle for all $t\geq 0$. Then we can divide it into two subcases: they form  two pairs of antipodal points or not. If they do not form two pairs of antipodal points, then  there are  three points  $\{x_i(t_0),x_j(t_0),x_k(t_0)\}$  such that any two points of them are not antipodal. Thus, by the previous argument, this lemma holds. Next, we consider  two pairs of antipodal points. Take $(x_1(t_0),v_1(t_0))$ in $\{(x_i(t_0),v_i(t_0))\}_{i=1}^4$. Without loss of generality, we let $x_3(t_0)$ be the antipodal points of $x_1(t_0)$. Thus, $x_2(t_0)$ and $x_4(t_0)$ are antipodal each other. Then we have
\begin{align}\label{eq 3.13}
  R_{x_1 \shortrightarrow x_2}(v_1)=v_2,\quad R_{x_2 \shortrightarrow x_3}(v_2)=v_3,\quad R_{x_3 \shortrightarrow x_4}(v_3)=v_4,\quad \quad R_{x_4 \shortrightarrow x_1}(v_4)=v_1.
\end{align}

Similar to  the case of $N\geq 5$, we assume that the velocities are not aligned along the great circle.  Then there is a sufficiently small $\epsilon>0$ such that  $\{x_1(t_0+\epsilon),x_2(t_0+\epsilon),x_3(t_0+\epsilon), x_4(t_0+\epsilon)\}$  does not located on a great circle and $\{v_1(t_0+\epsilon),v_2(t_0+\epsilon),v_3(t_0+\epsilon),v_4(t_0+\epsilon)\}$ are not the zero vector by the continuity of the solution. Moreover, by \eqref{eq 3.13}, the four points are located on a hemisphere. Hence,  the four points are not located on the same great circle and any two vectors of $\{x_1(t_0+\epsilon), x_2(t_0+\epsilon), x_3(t_0+\epsilon), x_4(t_0+\epsilon)\}$ are not antipodal. Since their velocities are nonzero, it is not invariant. Using the same method in  the case of $N\geq 5$, we can prove that the velocities are aligned along the great circle or all velocities are zero vectors.

Finally, we consider $N=3$. If they are not on a great circle, then there is no pair of antipodal points. Therefore, all velocities are zero by Lemma \ref{lemma 1.5}. Thus, it suffices to consider that they are located on a great circle. Then by a similar argument in the case of $N=4$, we can also prove that the velocities are aligned along the great circle or all velocities are zero vectors.

\end{proof}

\begin{proposition}\label{prop 1.6}We assume that  $\psi_{ij}$  satisfies  \eqref{eqn:psi_ij}-\eqref{eqn:psi}
 and $\sigma_{ij}$ is defined by \eqref{sigma}-\eqref{sigma_def}.  Let  $M_{\E_0}\subset \Omega_{\E_0}$  be the largest positively invariant set of \eqref{main} satisfying
\begin{align*}
\sum_{i,j=1}^N\frac{\psi(\|x_i-x_j\|)}{2N^2}\| R_{x_j \shortrightarrow x_i}(v_j)-v_i\|^2=0.
\end{align*}
 If $N\geq 3$ and $\sigma_r>0$, then $M_{\E_0}$ consists of the points of the following trajectories:
\begin{itemize}
\item[(1)] a stationary state, i.e.,  $v_i(t)=0$ for all $i\in \{1,\ldots,N\}$,
\item[(2)] the all nodes moving around  the same great circle at the same constant speed.
\end{itemize}
Moreover, for both cases of (1) and (2), the following holds for $i\in \{1,\ldots,N\}$.
\begin{align}\label{eq 3.15}
  \sum_{j=1,j\ne i}^N \frac{\sigma(\|x_i-x_j\|^2) }{N}(\|x_i\|^2x_j - \langle x_i,x_j \rangle  x_i)=0.
\end{align}

\end{proposition}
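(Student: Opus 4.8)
The plan is to derive Proposition \ref{prop 1.6} from Lemma \ref{lemma 3.5} by globalizing the two local alternatives it furnishes. The starting observation is that on the invariant set $M_{\E_0}$ the defining identity \eqref{eq 3.11} forces every summand with $\psi_{ij}>0$ to vanish, while summands with $\psi_{ij}=0$ (antipodal pairs) contribute nothing to \eqref{main}; hence the flocking operator is identically zero along any trajectory in $M_{\E_0}$, and using $\|x_i\|\equiv 1$ from Lemma \ref{lemma 1.1} the dynamics reduces to
\begin{align*}
\dot{x}_i=v_i,\qquad \dot{v}_i=-\|v_i\|^2x_i+\sum_{j=1,j\ne i}^N\frac{\sigma_{ij}}{N}\big(x_j-\langle x_i,x_j\rangle x_i\big).
\end{align*}
I would fix a trajectory in $M_{\E_0}$, apply Lemma \ref{lemma 3.5} to get a time $t_0$ at which alternative (a) or (b) holds, and show that (a) forces the whole trajectory into conclusion (1) and (b) into conclusion (2), with \eqref{eq 3.15} in either case.

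For Case (a), where $v_i(t_0)=0$ for all $i$ and the $x_i(t_0)$ are not cocircular, I would select three of them, say $x_i(t_0),x_j(t_0),x_k(t_0)$, that are pairwise non-antipodal and not on a common great circle, so that $\psi_{ij},\psi_{jk},\psi_{ki}$ remain positive on a short interval $[t_0,t_0+\delta)$ by continuity. On this interval the velocity alignment valid throughout $M_{\E_0}$ together with Lemma \ref{lemma 1.5} forces $v_i=v_j=v_k=0$; propagating to the remaining agents (each is non-antipodal to at least one of the three, and $R_{\cdot\shortrightarrow\cdot}(0)=0$) gives $v_l\equiv 0$ for every $l$ on $[t_0,t_0+\delta)$. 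Thus the state is frozen there, so $\dot v_i\equiv 0$, and the reduced equation immediately yields \eqref{eq 3.15}; by uniqueness for the autonomous system the trajectory is stationary for all time, which is conclusion (1).

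Case (b) is the substantive case. Let $n$ be the unit normal of the great circle carrying $\{x_i(t_0)\}$, and set $f_i:=\langle x_i,n\rangle$, $g_i:=\langle v_i,n\rangle$. Using the reduced equation and $\langle x_j-\langle x_i,x_j\rangle x_i,\,n\rangle=f_j-\langle x_i,x_j\rangle f_i$, the pair $(f_i,g_i)$ satisfies the linear homogeneous system $\dot f_i=g_i$, $\dot g_i=-\|v_i\|^2 f_i+\sum_{j\ne i}\frac{\sigma_{ij}}{N}(f_j-\langle x_i,x_j\rangle f_i)$ with vanishing data at $t_0$; uniqueness (Gronwall on the bounded, continuous coefficients) gives $f_i\equiv g_i\equiv 0$, so the motion stays on the great circle for all time. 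Writing $x_i=\cos\theta_i\,e_1+\sin\theta_i\,e_2$ in the plane of the circle, velocity alignment translates, via $R_{x_j\shortrightarrow x_i}(n\times x_j)=n\times x_i$ from Lemma \ref{lemma 2.3}, into $\dot\theta_i=\dot\theta_j=:\omega$ for every non-antipodal pair; chaining through a connecting node (the $x_i$ stay distinct by Theorem \ref{thm:1}) makes $\omega$ common to all agents, so $\theta_j-\theta_i$ and hence $\sigma_{ij}$ are constant in time. Projecting the reduced equation onto $T_i=n\times x_i$ gives $\ddot\theta_i=\sum_{j\ne i}\frac{\sigma_{ij}}{N}\sin(\theta_j-\theta_i)$, whence $\dot\omega=\ddot\theta_i$ is the same constant for every $i$; summing over $i$ and using $\sigma_{ij}=\sigma_{ji}$ together with the antisymmetry of $\sin(\theta_j-\theta_i)$ forces $N\dot\omega=0$. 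Therefore $\omega$ is constant (common constant speed, i.e. conclusion (2), reducing to (1) when $\omega=0$), and $\ddot\theta_i=0$ is precisely \eqref{eq 3.15}.

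The hard part will be Case (b): one must first rule out that the configuration leaves the great circle, for which the linear-ODE/uniqueness argument on the normal components is the clean device, and then show that the necessarily common angular speed is constant in time. This last algebraic point is the crux, and the pairing of the symmetry $\sigma_{ij}=\sigma_{ji}$ with the antisymmetry of $\sin(\theta_j-\theta_i)$ in the summed tangential equation is what simultaneously delivers constant speed and the equilibrium identity \eqref{eq 3.15}. The supporting geometric input is the rotation-operator computation $R_{x_j\shortrightarrow x_i}(T_j)=T_i$ on the circle, which is what upgrades velocity alignment from equal speeds to equal signed angular velocities.
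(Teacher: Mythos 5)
Your proof is correct, and its skeleton matches the paper's: both run off Lemma \ref{lemma 3.5}, split into the zero-velocity and great-circle alternatives, use that the flocking term vanishes identically on $M_{\E_0}$ to reduce the dynamics, and finish with the tangential equation $\ddot\theta_i=\sum_{j\ne i}\frac{\sigma_{ij}}{N}\sin(\theta_j-\theta_i)$, whose vanishing is exactly \eqref{eq 3.15}. The differences lie in the sub-arguments, and in two places you are more complete than the paper. In the stationary case the paper argues by contradiction, taking a maximal interval of vanishing velocities and applying Lemma \ref{lemma 1.5} just past its endpoint, while you observe that a frozen state satisfying \eqref{eq 3.15} is an equilibrium and invoke ODE uniqueness; these are equivalent in substance. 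In the great-circle case the paper simply asserts that ``their trajectories also keep the same great circle,'' whereas you prove it by projecting onto the normal direction and running a Gronwall/uniqueness argument on the resulting linear homogeneous system; this fills a genuine gap, as does your chaining through a third agent to extend $\dot\theta_i=\dot\theta_j$ to antipodal pairs, for which $\psi_{ij}=0$ gives no alignment information and which the paper glosses over. Finally, to conclude $\ddot\theta_i=0$ the paper notes the right-hand side of \eqref{eq 3.16} is constant in time and rules out $\ddot\theta_i\neq 0$ because $\dot\theta_i$ would then blow up, contradicting the energy bound; you instead sum over $i$ and play $\sigma_{ij}=\sigma_{ji}$ against the antisymmetry of $\sin(\theta_j-\theta_i)$ to get $N\dot\omega=0$ directly, which is cleaner and purely algebraic. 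One small caveat: the identity $R_{x_j\shortrightarrow x_i}(n\times x_j)=n\times x_i$ is not literally contained in Lemma \ref{lemma 2.3}; it requires the additional (routine) observation that $R_{x_j\shortrightarrow x_i}$ is a rotation fixing the axis $n$, hence commutes with taking cross products with $n$.
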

\begin{proof}
By Lemma \ref{lemma 3.5}, if $N\geq 3$, then for any solution $\{(x_i(t),v_i(t))\}_{i=1}^N \in M_{\E_0}$, there is $t_0\geq 0$ such that
 \begin{itemize}
   \item $v_i(t_0)=0$ for all $i\in \{1,\ldots,N\}$ and $\{x_i(t_0)\}_{i=1}^N$ are not deployed on a great circle~~~~~
   \[\mbox{or}\]
   \item $\{x_i(t_0)\}_{i=1}^N$ are deployed on a great circle and their velocities are aligned along the circle.
 \end{itemize}
\noindent$\circ$ Case 1:  all agents $\{x_i(t_0)\}_{i=1}^N$ are not deployed on  one great circle.\\

By the above, $v_i(t_0)=0$ for all $i\in \{1,\ldots,N\}$.
In this case, we claim that for all $t\geq 0$ and $i\in \{1,\ldots,N\}$, $v_i(t)=0$.
Assume not, i.e., there is $t_1\geq 0$ and an index $i_1\in \{1,\ldots,N\}$ such that
\[v_{i_1}(t_1)\ne 0.\]
Without loss of generality, we may assume that $t_1>t_0$. Then there is the maximum interval $[t_0,T]$ such that for all $1\leq i\leq N$,
\[v_i(t)=0, \quad t\in  [t_0,T]\]
and  there is $i_1'\in\{1,\ldots, N\}$ such that
\[v_{i_1'}(t)\neq 0, \quad t\in (T,T'),\]
 for some $T'>0$. Therefore, at $t=T$, $\{x_i(t)\}_{i=1}^N$ are not deployed on a great circle.
 By the continuity of the solution, we can choose small $\epsilon>0$ such that
 \begin{align}\label{eq 3.14}
 v_{i_1'}(T+\epsilon)\neq 0
 \end{align}
 and $\{x_i(T+\epsilon)\}_{i=1}^N$ are not deployed on an one great circle. Thus, there are three points in $\{x_i(T+\epsilon)\}_{i=1}^N$ such that any two points in the three points are not antipodal.  Since $\{(x_i(t),v_i(t))\}_{i=1}^N \in M_{\E_0}$, we have
 \[\sum_{i,j=1}^N\frac{\psi(\|x_i(t)-x_j(t)\|)}{2N^2}\| R_{x_j(t) \shortrightarrow x_i(t)}(v_j(t))-v_i(t)\|^2=0\quad \mbox{ at $t=T+\epsilon$.}\]
  This implies that $\| R_{x_j(T+\epsilon) \shortrightarrow x_i(T+\epsilon) }(v_j(T+\epsilon) )-v_i(T+\epsilon) \|^2=0$ for these three agents. Thus, by Lemma \ref{lemma 1.5}, their velocities at $t=T+\epsilon$ are all zero vectors. By similar argument in Lemma \ref{lemma 3.5}, other agents also have zero velocity. This is a contradiction to  \eqref{eq 3.14}. Since $\{(x_i(t),v_i(t))\}_{i=1}^N \in M_{\E_0}$ is the solution to \eqref{main},  \eqref{eq 3.15} holds for all  $t\geq 0$ and $i\in \{1,\ldots,N\}$.\\

\noindent $\circ$ Case 2:  all $\{x_i(t_0)\}_{i=1}^N$ are deployed on a great circle and their velocities are aligned along the  circle.\\

Their trajectories  also keep the same great circle. Therefore, after changing the coordinates,  we can represent the solution by
\[x_i=(\cos\theta_i,\sin \theta_i,0),\quad i=1,\ldots,N.\]
Therefore, we have
\[v_i=\dot\theta_i(-\sin\theta_i,\cos \theta_i,0),\quad i=1,\ldots,N\]
and
\[\dot v_i=\ddot\theta_i(-\sin\theta_i,\cos \theta_i,0)-(\dot\theta_i)^2(\cos\theta_i,\sin \theta_i,0),\quad i=1,\ldots,N.\]
Moreover, by the velocity alignment,  for any $i,j\in\{1,\ldots,N\}$,
\begin{align}\label{thetadot}
\dot \theta_i=\dot \theta_j.
\end{align}

Since $\psi_{ij}\big(R_{x_j \shortrightarrow x_i}(v_j)-v_i\big)=0$ for any $i,j\in\{1,\ldots,N\}$, the second equation in the main system gives the following equality.
\[\ddot\theta_i(-\sin\theta_i,\cos \theta_i)=\sum_{j=1,j\ne i}^N \frac{\sigma_{ij} }{N}\left[(\cos\theta_j,\sin \theta_j) - \cos (\theta_i-\theta_j)(\cos\theta_i,\sin \theta_i)\right].\]
 This yields that
 \begin{align}\label{eq 3.16}\ddot\theta_i=\sum_{j=1,j\ne i}^N \frac{\sigma_{ij} }{N}\sin (\theta_j-\theta_i).\end{align}

 The right hand side of the above is a constant for $t>0$ by \eqref{thetadot}. Therefore, if $\ddot\theta_i \ne 0$, then $\dot\theta_i$ goes to $\infty$ or $-\infty$. However, it is impossible since $\E$ is bounded. Thus, we have $\ddot\theta_i = 0$ and it is a constant rigid motion. Finally, \eqref{eq 3.16} and $\ddot\theta_i=0$ implies \eqref{eq 3.15}.
\end{proof}

\begin{lemma}\cite{L-S}\label{lemma 3.6} Let $\{(x_i(t),v_i(t))\}_{i=1}^N $ be a solution to \eqref{main} and $x_i(t)\in \bbs^2$ for all  $i\in \{1,\ldots,N\}$.
We assume that $\sigma_{ij}$ is defined by \eqref{sigma} and the following holds.
\begin{align}\label{eq 3.165}
  \sum_{j=1,j\ne i}^N \frac{\sigma(\|x_i(t)-x_j(t)\|^2) }{N}(\|x_i(t)\|^2x_j(t) - \langle x_i(t),x_j(t) \rangle  x_i(t))=0,\quad \mbox{for all  $i\in \{1,\ldots,N\}$}.
\end{align}
Then
\begin{align*}
\Big(N\sigma_a-\frac{(N-1)\sigma_r}{2}\Big)\|\bar{x}(t)\|^2=\sigma_a\sum_{i=1}^N \langle x_i(t),\bar{x}(t) \rangle^2,
\end{align*}
where $\bar{x}$ is the centroid of the agents.
\end{lemma}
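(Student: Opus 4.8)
The plan is to project the stationarity condition \eqref{eq 3.165} onto the centroid $\bar{x}$ and then sum over the agents, exploiting the symmetry $\sigma_{ij}=\sigma_{ji}$ together with the spherical constraint $\|x_i\|=1$ coming from Lemma \ref{lemma 1.1}. First I would use $\|x_i\|=1$ to reduce \eqref{eq 3.165} to
\begin{align*}
\sum_{j=1,j\ne i}^N \sigma_{ij}\big(x_j-\langle x_i,x_j\rangle x_i\big)=0,\qquad 1\le i\le N,
\end{align*}
and record that $\|x_i-x_j\|^2=2(1-\langle x_i,x_j\rangle)$, so that by \eqref{sigma}-\eqref{sigma_def}
\begin{align*}
\sigma_{ij}=\sigma_a-\frac{\sigma_r}{2(1-\langle x_i,x_j\rangle)}.
\end{align*}

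Next I would take the inner product of each reduced equation with $\bar{x}$ and sum over $i$, then split $\sigma_{ij}$ into its attractive part $\sigma_a$ and its repulsive part. The attractive contribution is a polynomial in the inner products $\langle x_i,x_j\rangle$; using $\sum_{j\ne i}\langle x_i,x_j\rangle=N\langle x_i,\bar{x}\rangle-1$ and $\sum_{i=1}^N\langle x_i,\bar{x}\rangle=N\|\bar{x}\|^2$, it collapses exactly to $\sigma_a N\big(N\|\bar{x}\|^2-\sum_{i=1}^N\langle x_i,\bar{x}\rangle^2\big)$. This part is routine bookkeeping with the centroid sums.

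The crucial step is the repulsive term. The algebraic identity
\begin{align*}
\frac{x_j-\langle x_i,x_j\rangle x_i}{1-\langle x_i,x_j\rangle}=\frac{2(x_j-x_i)}{\|x_i-x_j\|^2}+x_i
\end{align*}
lets me split the repulsive contribution into an interaction piece carrying the denominator $\|x_i-x_j\|^2$ and a piece proportional to $x_i$. After pairing with $\bar{x}$ and summing over ordered pairs $i\ne j$, the interaction piece is antisymmetric under the swap $i\leftrightarrow j$ (its numerator $\langle x_j-x_i,\bar{x}\rangle$ changes sign while the denominator is symmetric) and therefore vanishes, while the remaining piece contributes $-\tfrac{\sigma_r}{2}(N-1)N\|\bar{x}\|^2$. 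Adding the attractive and repulsive contributions, setting the total to zero, and dividing by $N$ produces the claimed identity.

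The main obstacle—and essentially the only nontrivial point—is the treatment of the repulsive force: a priori it is a nonlinear function of the configuration, but the decomposition above isolates precisely the antisymmetric part that cancels in the double sum, which is exactly what allows the clean quadratic identity in $\bar{x}$ to emerge. Everything else is elementary manipulation of the centroid sums.
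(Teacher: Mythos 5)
Your proof is correct; every identity you use checks out (the verification of $\frac{x_j-\langle x_i,x_j\rangle x_i}{1-\langle x_i,x_j\rangle}=\frac{2(x_j-x_i)}{\|x_i-x_j\|^2}+x_i$ requires only $\|x_i\|=\|x_j\|=1$), and the bookkeeping leads exactly to the claimed identity. The route differs from the paper's in how the repulsive singularity is neutralized. The paper sums \eqref{eq 3.165} over $i$ first and applies the symmetry $\sigma_{ij}=\sigma_{ji}$ at the vector level: since $\sum_{i,j,\,j\ne i}\sigma_{ij}(x_j-x_i)=0$, the double sum collapses to $\sum_{i,j,\,j\ne i}\sigma_{ij}(1-\langle x_i,x_j\rangle)x_i$, and the key observation is that $\sigma_{ij}(1-\langle x_i,x_j\rangle)=\tfrac12\big(\sigma_a\|x_i-x_j\|^2-\sigma_r\big)$ is polynomial in the configuration, so the attractive and repulsive parts are tamed in a single stroke; only at the end does the paper pair with $\bar{x}$. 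You instead pair with $\bar{x}$ immediately, split $\sigma_{ij}$ into attractive and repulsive parts, and dispose of the singular repulsive piece via your decomposition identity, whose singular summand is antisymmetric in $(i,j)$ and cancels in the ordered double sum. The two cancellations are the same mechanism in disguise (a symmetric weight multiplying the antisymmetric difference $x_j-x_i$), but the paper's ordering is more economical --- no case split, essentially three lines --- while yours makes explicit the physically meaningful point that the singular part of the repulsive force is a pure pair interaction directed along $x_j-x_i$, hence invisible to the centroid; that reading generalizes more readily to other repulsive kernels $\sigma_r/\|x_i-x_j\|^{2\beta}$, where the paper's multiplicative cancellation is no longer available.
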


\begin{proof}

By \eqref{eq 3.165}, $x_i\in \bbs^2$ and  $\sigma_{ij}=\sigma_{ji}$, we have
\begin{align*}
0=\sum_{i,j=1,j\ne i}^N \sigma_{ij} (x_i - \langle x_i,x_j \rangle  x_i)=\sum_{i,j=1,j\ne i}^N \left(\sigma_a-\frac{\sigma_r}{\|x_i-x_j\|^2}\right)(x_i - \langle x_i,x_j \rangle  x_i).
\end{align*}
Since $\|x_i-x_j\|^2=2-2\langle x_i,x_j\rangle$,
\begin{align*}
0=\frac{1}{2}\sum_{i,j=1,j\ne i}^N \left(\sigma_a\|x_i-x_j\|^2-\sigma_r\right)x_i
%\\&=\sum_{i,j=1,j\ne i}^N \left(\sigma_a-\sigma_a\langle x_i,x_j \rangle-\frac{\sigma_r}{2}\right)x_i
=\sum_{i=1}^N \left(N\sigma_a-N\sigma_a\langle x_i,\bar{x} \rangle-\frac{(N-1)\sigma_r}{2}\right)x_i.
\end{align*}
Therefore, we have
\begin{align*}
0=\sum_{i=1}^N \left(\sigma_a-\sigma_a\langle x_i,\bar{x} \rangle-\frac{N-1}{N}\frac{\sigma_r}{2}\right)\langle x_i,\bar{x}\rangle=
\Big(N\sigma_a-\frac{(N-1)\sigma_r}{2}\Big)\|\bar{x}\|^2-\sigma_a\sum_{i=1}^N \langle x_i,\bar{x} \rangle^2.
\end{align*}

\end{proof}

\begin{proof}[\bf Proof of Theorem \ref{thm:2}] We will use Proposition \ref{prop 3.2} and the classification of the largest positively invariant set in Proposition \ref{prop 1.6}.
Let $M_{\E_0}$ be the largest positively invariant set of \eqref{main} satisfying
\begin{align*}
\sum_{i,j=1}^N\frac{\psi(\|x_i-x_j\|)}{2N^2}\| R_{x_j \shortrightarrow x_i}(v_j)-v_i\|^2=0\quad \mbox{and}\quad  \E(Z) \leq \E_0\quad \mbox{for}\quad Z\in M_{\E_0}.
\end{align*}
\newline

 For $(i)$, we assume that $\sigma_a>0$, $\sigma_r =0$ and $\E_0=\E(Z(0))<\E_C^0$. We claim that for any $\{(x_i,v_i)\}_{i=1}^N\in M_{\E_0}$,
  \[x_i=x_j,\quad \mbox{for any}~i,j\in \{1,\ldots,N\}.\] We take any trajectory $\{(x_i(t),v_i(t))\}_{i=1}^N\in M_{\E_0}$ and fix $t\geq 0$.  Then the following three cases are possible.
  \begin{enumerate}
    \item[$\circ$]  all vectors in $\{x_1(t),\ldots,x_N(t)\}$ are the same,
    \item[$\circ$]  there are only two different vectors in $\{x_1(t),\ldots,x_N(t)\}$,
    \item[$\circ$]  at least, there are  three different vectors in $\{x_1(t),\ldots,x_N(t)\}$.
  \end{enumerate}

 We assume that there are only two different vectors in $\{x_1(t),\ldots,x_N(t)\}$.   For simplicity, we may assume that $x_1(t)\ne x_2(t)$. Then  we can easily check that $\{(x_1(t),v_1(t)), (x_2(t),v_2(t))\}$  is the solution to  \eqref{two_eq}. Thus, we can apply Proposition \ref{prop 3.3} to obtain that $x_1(t)$ and $x_2(t)$ are antipodal since the case (2) in Proposition \ref{prop 3.3} does not occurs by
\[\sigma(\|x_i-x_j\|^2)=\sigma_a> 0.\]
This implies that any two vectors in $\{x_1(t),\ldots,x_N(t)\}$ are the same or antipodal. However, all configurations in this case violate the assumption: $\E(Z) \leq \E_0$.

Assume that at least, there are  three different vectors in $\{x_1,\ldots,x_N\}$. By Proposition \ref{prop 1.6},
 they are a stationary state or constant speed  motions on a great circle and satisfying
 \[0=\sum_{j=1,j\ne i}^N \frac{\sigma_{ij} }{N}(\|x_i\|^2x_j - \langle x_i,x_j \rangle  x_i).\]
 By Lemma \ref{lemma 3.6},
 \begin{align*}
\|\bar{x}\|^2=\frac{1}{N}\sum_{i=1}^N \langle x_i,\bar{x} \rangle^2.
\end{align*}
This means that $x_i=x_j$   for all indices $i,j\in \{1,\ldots,N\}$ and  this is a contradiction.

Therefore, all positions in $M_{\E_0}$ satisfy that $x_i=x_j$  for any $i,j\in\{1,\ldots,N\}$ and this implies that  the solution  has the asymptotic rendezvous property by Theorem \ref{LITHM}.\\

Next we consider $(ii)$. Assume that  $\displaystyle\frac{2N}{N-1}\sigma_a>\sigma_r>0$ and $\E(Z(0))<\E_{C}^1$. We take any $\{(x_i(t),v_i(t))\}_{i=1}^N\in M_{\E_0}$ and fix time $t\geq 0$.  By Proposition \ref{prop 1.6}, $\{(x_i(t),v_i(t))\}_{i=1}^N\in M_{\E_0}$ is a stationary state or constant speed  motions on a great circle and satisfies that  for any $i=1,\ldots,N$,
\[0=\sum_{j=1,j\ne i}^N \frac{\sigma_{ij} }{N}(\|x_i\|^2x_j - \langle x_i,x_j \rangle  x_i).\]
Since  $\E(Z(0))<\E_{C}^1$ and $\displaystyle\frac{d\E(Z(t))}{dt} \leq 0$, we have $\bar{x}\neq 0$.  By  Lemma \ref{lemma 3.6},
\[0=\Big(N\sigma_a-\frac{(N-1)\sigma_r}{2}\Big)\|\bar{x}\|^2-\sigma_a\sum_{i=1}^N \langle x_i,\bar{x} \rangle^2.\]
This implies that
\begin{align*}
(1-\frac{N-1}{N}\frac{\sigma_r}{2\sigma_a})\|\bar{x}\|^2=\frac{1}{N}\sum_{i=1}^N \langle x_i,\bar{x} \rangle^2.
\end{align*}
Note that
\[\rho(t)+\frac{1}{\|\bar{x}\|^2}\sum_{i=1}^N \langle x_i,\bar{x} \rangle^2=N.\]
Therefore,  $\displaystyle\rho(t)=\frac{(N-1)\sigma_r}{2\sigma_a}$ for all $t\geq 0$ and we conclude that the ensemble  has an asymptotic formation configuration by Theorem \ref{LITHM}.\\

Finally, we deal with $(iii)$. Let  $\E_0$ be the initial energy of a fixed solution $\{(x_i,v_i)\}_{i=1}^N$. We assume that  $\displaystyle\sigma_r \geq \frac{2N}{N-1}\sigma_a> 0$ or  $\displaystyle\sigma_r>0=\sigma_a$. Similar to the previous cases, if $N\geq 3$, then Proposition \ref{prop 1.6} and Lemma \ref{lemma 3.6} imply that
\[\bar{x}=0,\quad \mbox{for any}~\{(x_i,v_i)\}_{i=1}^N\in M_{\E_0}.\]
Thus, the ensemble  has an  asymptotic uniform deployment by Theorem \ref{LITHM}.

\end{proof}

\begin{remark}
By  a symmetric formation, for $N=3$, we can construct an example such that $\rho(t)$ converges to $N$ when  $\displaystyle \sigma_r < \frac{2N}{N-1}\sigma_a$. Therefore, the above result is almost optimal.
\end{remark}

\section{Simulation results}\label{sec4}\setcounter{equation}{0}
In this section, by adding a boost term to \eqref{main}, we numerically implement the nonzero-speed formation flight of the solution to the main system. We verify the results in Theorem \ref{thm:2} through numerical simulations. In \cite{L-S}, cooperative control laws were able to achieve various steady-state patterns by interacting with a damping term. However,  it is necessary to remove the damping term and add a boost term  to  keep the formation and  a non-zero speed of agents simultaneously. In this case, the desired formation was not constructed by only cooperative control (Figure \ref{fig0}). On the other hand, \eqref{main}  derives robust non-stationary formations even when the boost term is added due to interaction with the cooperative control law and the flocking term for the spherical surface. We note that  each agent maintains a nonzero  constant speed (Figure \ref{fig9}). In particular, it can be seen that the patterns of the flight formation are very similar to the stationary patterns without the boost term. See Figure \ref{fig6} for the comparison between maximal diameters of the ensemble in \eqref{main} and the boosted ensemble in \eqref{eqb}.
\begin{figure}[!ht]
\centering
\begin{minipage}{0.243\textwidth}
\centering
\includegraphics[width=\textwidth]{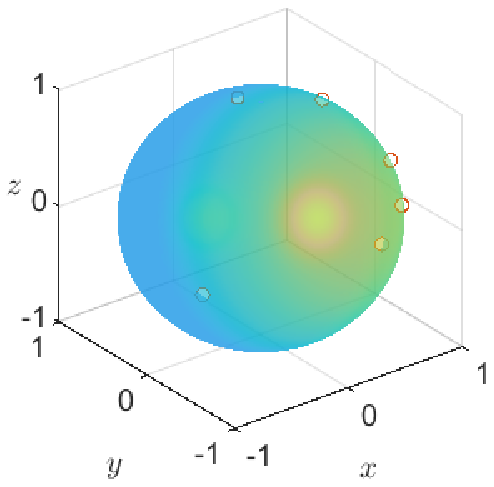}\\
(a) $t=0$
\end{minipage}
\begin{minipage}{0.243\textwidth}
\centering
\includegraphics[width=\textwidth]{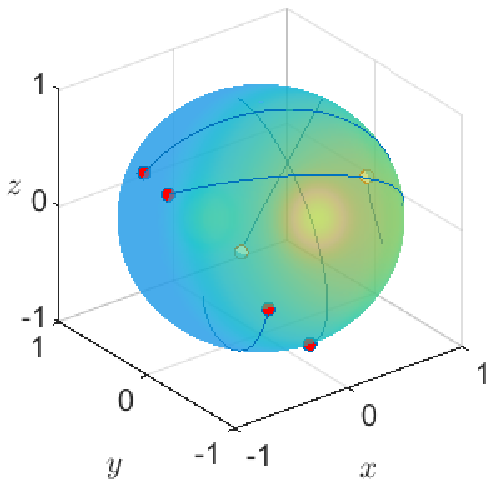}\\
(b) $t=1$
\end{minipage}
%\begin{minipage}{0.243\textwidth}
%\centering
%\includegraphics[width=\textwidth]{fig1c.eps}\\
%(c) $t=5$
%\end{minipage}
\begin{minipage}{0.243\textwidth}
\centering
\includegraphics[width=\textwidth]{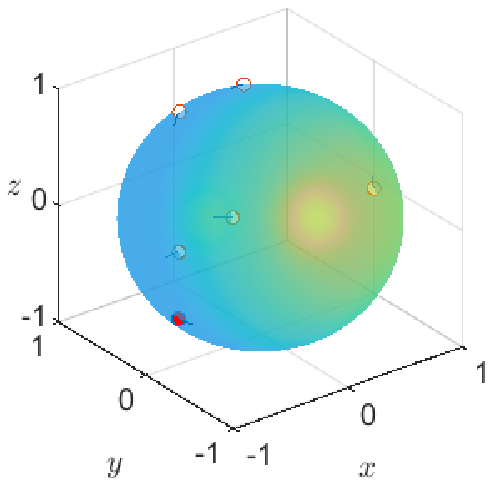}\\
(c) $t=10$
\end{minipage}
%\begin{minipage}{0.243\textwidth}
%\centering
%\includegraphics[width=\textwidth]{fig1e.eps}\\
%(e) $t=20$
%\end{minipage}
\begin{minipage}{0.243\textwidth}
\centering
\includegraphics[width=\textwidth]{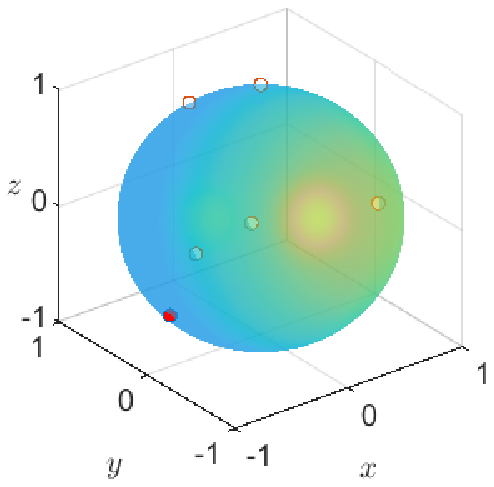}\\
(d) $t=50$
\end{minipage}
\caption{Time evolution when $\psi(x)=1$ and $\sigma_{ij}=0$}
\label{fig1}
\end{figure}

\begin{figure}[!ht]
\centering
\begin{minipage}{0.243\textwidth}
\centering
\includegraphics[width=\textwidth]{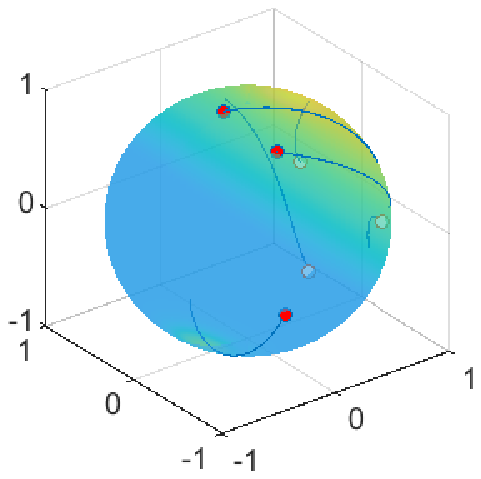}\\
(a) $t=1$
\end{minipage}
%\begin{minipage}{0.243\textwidth}
%\centering
%\includegraphics[width=\textwidth]{fig2b.eps}\\
%(b) $t=5$
%\end{minipage}
\begin{minipage}{0.243\textwidth}
\centering
\includegraphics[width=\textwidth]{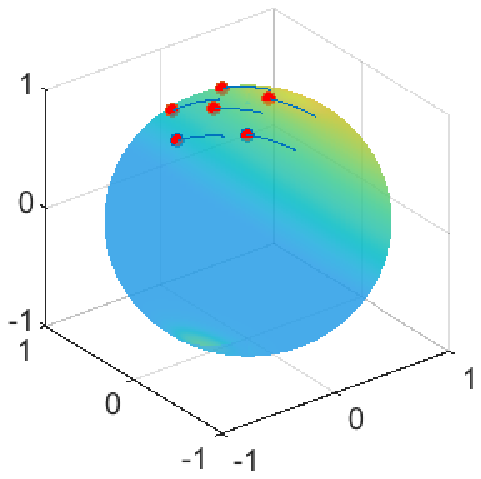}\\
(b) $t=10$
\end{minipage}
%\vfill
%\begin{minipage}{0.243\textwidth}
%\centering
%\includegraphics[width=\textwidth]{fig2d.eps}\\
%(d) $t=50$
%\end{minipage}
\begin{minipage}{0.243\textwidth}
\centering
\includegraphics[width=\textwidth]{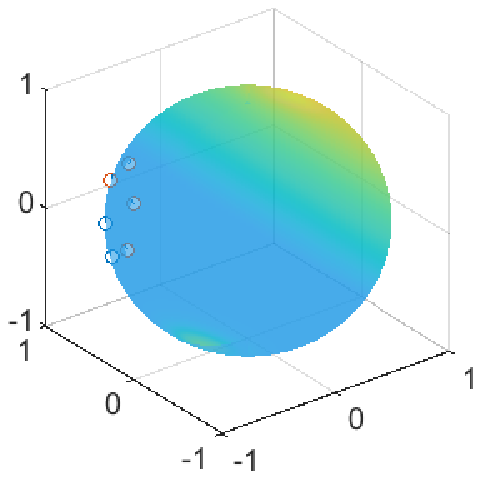}\\
(c) $t=50$
\end{minipage}
\begin{minipage}{0.243\textwidth}
\centering
\includegraphics[width=\textwidth]{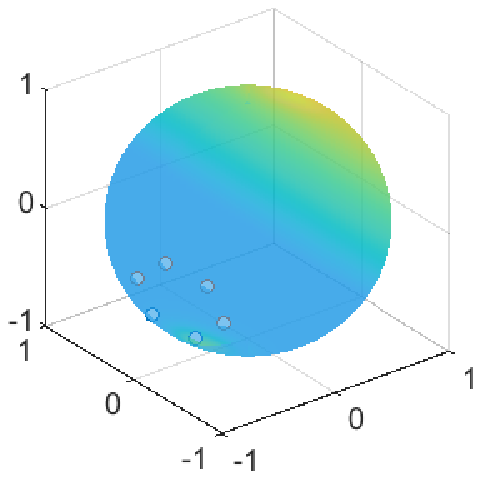}\\
(d) $t=100$
\end{minipage}
\caption{Time evolution when $\psi(x)=2(\exp(2-x)-1)$, $\sigma_a=1$ and $\sigma_r=0.5$}
\label{fig2}
\end{figure}

\begin{figure}[!ht]
\centering
\begin{minipage}{0.243\textwidth}
\centering
\includegraphics[width=0.8\textwidth]{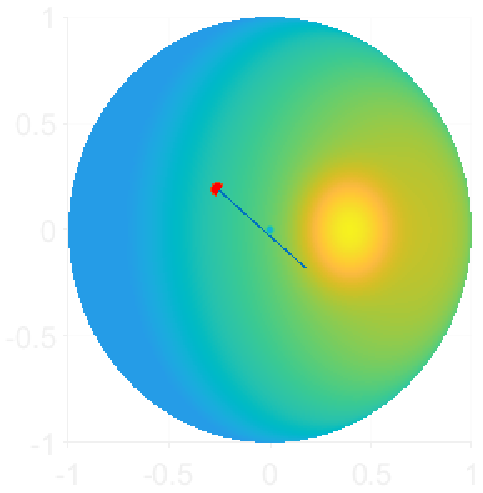}\\
(a) $\sigma_r=0$
\end{minipage}
\begin{minipage}{0.243\textwidth}
\centering
\includegraphics[width=0.8\textwidth]{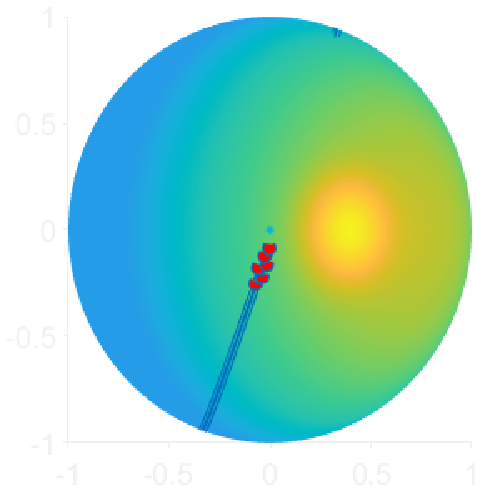}\\
(b) $\sigma_r=0.01$
\end{minipage}
\begin{minipage}{0.243\textwidth}
\centering
\includegraphics[width=0.8\textwidth]{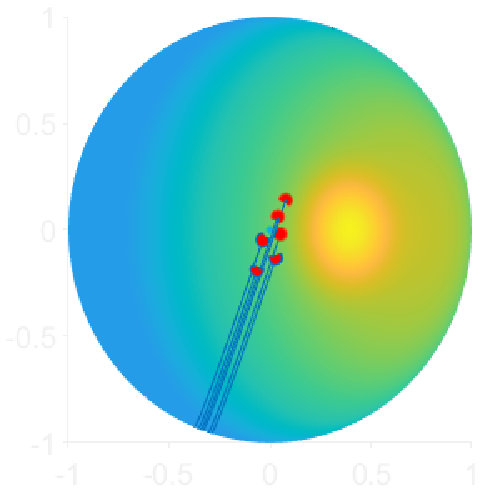}\\
(c) $\sigma_r=0.04$
\end{minipage}
\begin{minipage}{0.243\textwidth}
\centering
\includegraphics[width=0.8\textwidth]{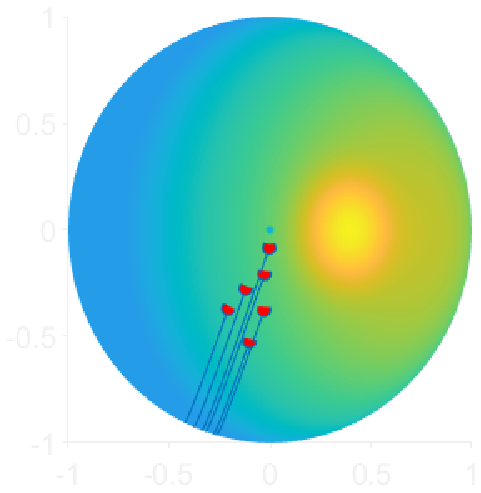}\\
(d) $\sigma_r=0.08$
\end{minipage}
\vfill
\begin{minipage}{0.243\textwidth}
\centering
\includegraphics[width=0.8\textwidth]{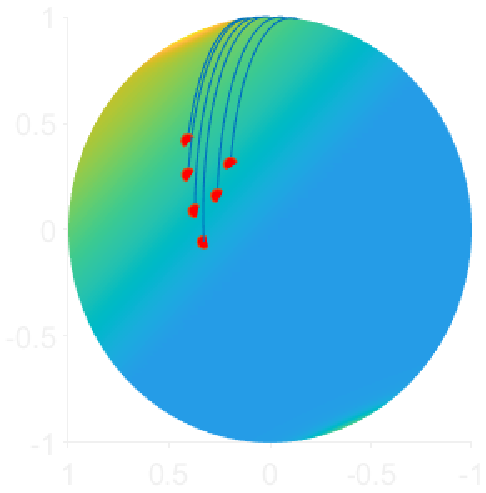}\\
(e) $\sigma_r=0.1$
\end{minipage}
\begin{minipage}{0.243\textwidth}
\centering
\includegraphics[width=0.8\textwidth]{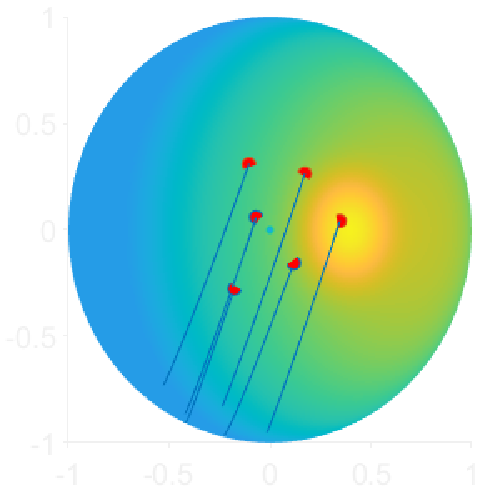}\\
(f) $\sigma_r=0.2$
\end{minipage}
\begin{minipage}{0.243\textwidth}
\centering
\includegraphics[width=0.8\textwidth]{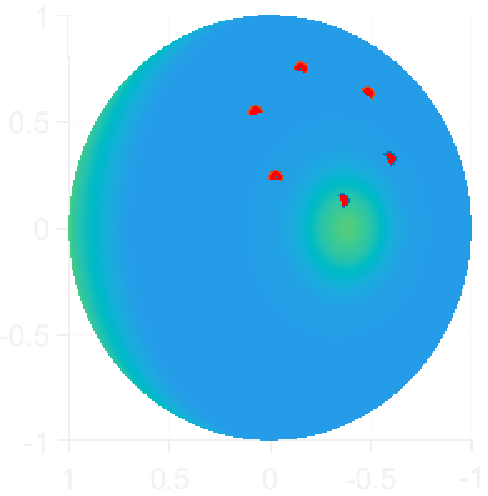}\\
(g) $\sigma_r=0.3$
\end{minipage}
\begin{minipage}{0.243\textwidth}
\centering
\includegraphics[width=0.8\textwidth]{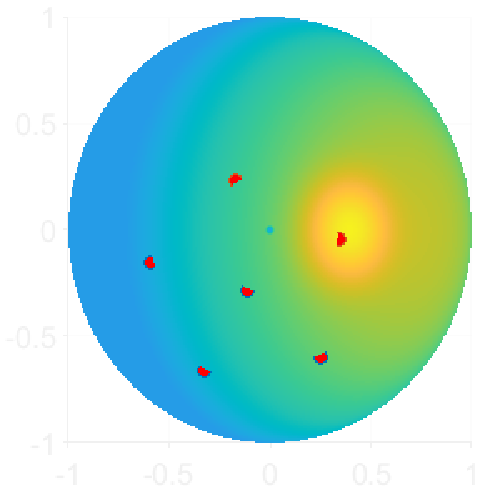}\\
(h) $\sigma_r=0.5$
\end{minipage}
\caption{Patterns of formation when $\sigma_a=1$ and $T=400$}
\label{fig3}
\end{figure}
 For the numeric simulations, we consider the flocking formation on  $\mathbb{S}^2$ with $N=6$ agents.
The initial position and velocity are randomly chosen in $(x,v)\in T\mathbb{S}^2\cap\left([-1,1]^3\times [-3,3]^3\right)$ as follows:
\begin{align*}
&x_1(0)= (-0.1192,\phantom{-}0.5108 ,-0.8514), && x_2(0)=( \phantom{-}0.8547 , -0.3671, \phantom{-}0.3671),\\
& x_3(0)=(\phantom{-}0.7076 ,  \phantom{-}0.2235,   \phantom{-}0.6704  ),&&
x_4(0)=(\phantom{-} 0.3600,\phantom{-}  0.7364, \phantom{-} 0.5728  ),\\
&  x_5(0)=( \phantom{-} 0.8977, -0.4406, \phantom{-}0.0000 ),& &x_6(0)=( \phantom{-} 0.8754,  -0.2398,  -0.4197),
\end{align*}
and
\begin{align*}
&v_1(0)=( -1.1540, -1.7264,  -0.8743), &&v_2(0)=( -1.3068, \phantom{-}0.0568,   \phantom{-}3.0000 ),\\ &v_3(0)=(  \phantom{-}0.9331, \phantom{-} 1.5789,  -1.5113),
&&v_4(0)=( \phantom{-}2.7989, \phantom{-} 0.7976,   -2.7848   ), \\&v_5(0)=( -1.0254,  -2.0892,   \phantom{-} 2.5773 ),&& v_6(0)=( \phantom{-}0.4591,  \phantom{-} 0.8375,  \phantom{-}0.4789  ).
\end{align*}
Here, all initial data satisfy the admissible conditions:
\[\|x_i(0)\|=1\quad \mbox{ and }\quad\langle x_i(0),v_i(0)\rangle=0.\]

The energy can be totally dissipated when $\sigma=0$ and hence the agents are no longer moving after some time period. See Figure \ref{fig1} (c). To visually express such a phenomenon, in this section, we use a red point for the position $x_i(t)$ of $i$th agent at $t=t_0$ and the blue line in all figures that represents the trajectory of agents for the time interval  $[t_0-3, t_0]$. Thus, a red point without a blue line in some figure, for example, Figure \ref{fig1}(d), means the corresponding agent does not move in the time interval $[t-3, t]$.

 For the cooperative control law,  we first consider the following case.
\[\sigma(x)=\sigma_a-\frac{\sigma_r}{x}.\]
In this case, as seen in Section \ref{sec3}, if the attractive force parameter  $\sigma_a$ is less than the  repulsive force parameter  $\sigma_r>0$, then their configuration is evenly distributed on the entire spherical surface. Therefore, we assume $\psi(x)=2(\exp(2-x)-1)$ and take $\sigma_r<\sigma_a$.
With this decreasing function $\psi$ and $\sigma_a=1$, the pattern  formations of \eqref{main} are given in Figure \ref{fig3} for several values of $\sigma_r$.
In Figure \ref{fig3}, if $\sigma_r$ is sufficiently small, that is, the effect of repulsion is small, then agents are clustered together.
As $\sigma_r$ increases, the force to repel each other becomes stronger.
Therefore, the diameter is getting larger. Also, $\rho(t)$ converges $(N-1)\sigma_r/2\sigma_a$ as we proved in Theorem \ref{thm:2}.
See Figure \ref{fig4} and Figure \ref{fig5} for numerical simulations. We note that for all cases except for that their configuration is on a great circle, it keeps the energy of the system in \eqref{main}  being dissipated. This leads the agents to no longer move after a period of time  as we can see in Figure \ref{fig2}.

\begin{figure}[!ht]
\centering
\begin{minipage}{0.3\textwidth}
\centering
\includegraphics[width=\textwidth]{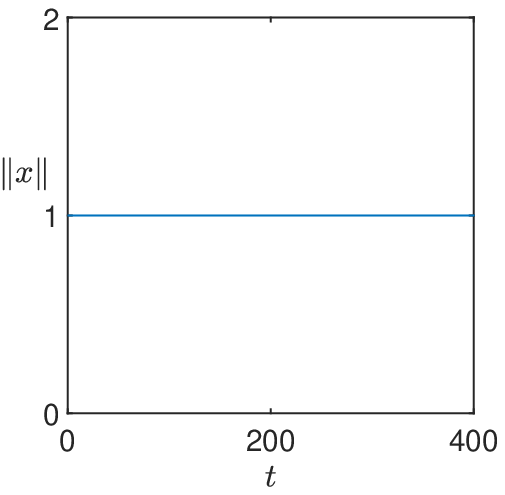}\\
(a) Norm of position $\|x_1\|$
\end{minipage}
\begin{minipage}{0.3\textwidth}
\centering
\includegraphics[width=\textwidth]{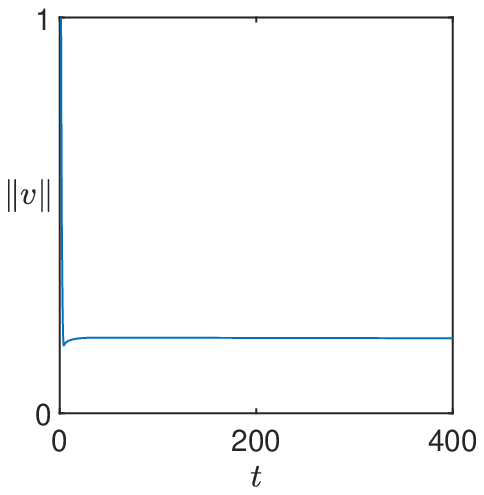}\\
(b) Norm of velocity $\|v_1\|$
\end{minipage}
\begin{minipage}{0.3\textwidth}
\centering
\includegraphics[width=\textwidth]{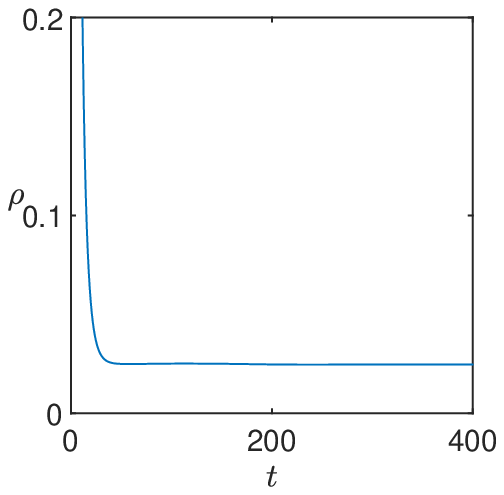}\\
(c) $\rho(t)$
\end{minipage}
\caption{The norm of position, velocity of the first agent and $\rho(t)$ when $\sigma_r=0.01$ and $\sigma_a=1$ }
\label{fig4}
\end{figure}

\begin{figure}[!ht]
\centering
\begin{minipage}{0.3\textwidth}
\centering
\includegraphics[width=\textwidth]{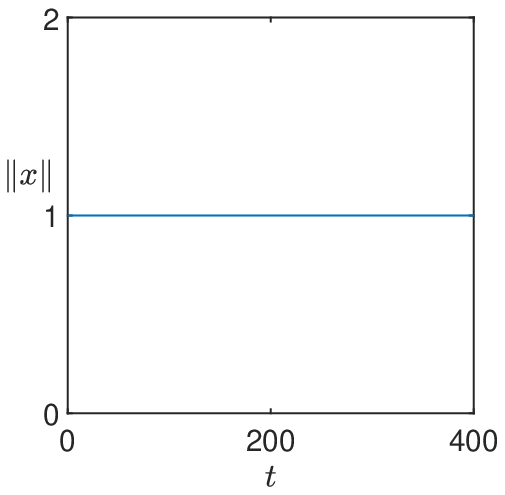}\\
(a) Norm of position $\|x_1\|$
\end{minipage}
\begin{minipage}{0.3\textwidth}
\centering
\includegraphics[width=\textwidth]{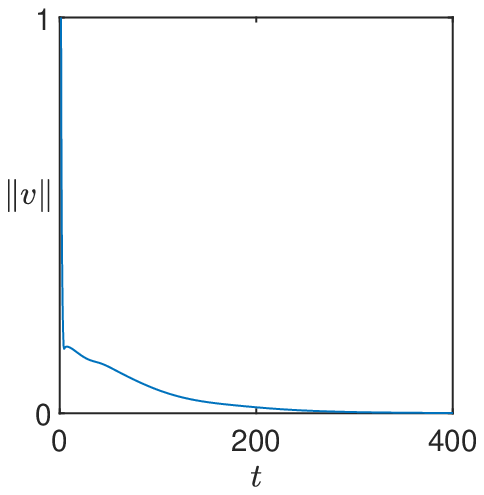}\\
(b) Norm of velocity $\|v_1\|$
\end{minipage}
\begin{minipage}{0.3\textwidth}
\centering
\includegraphics[width=\textwidth]{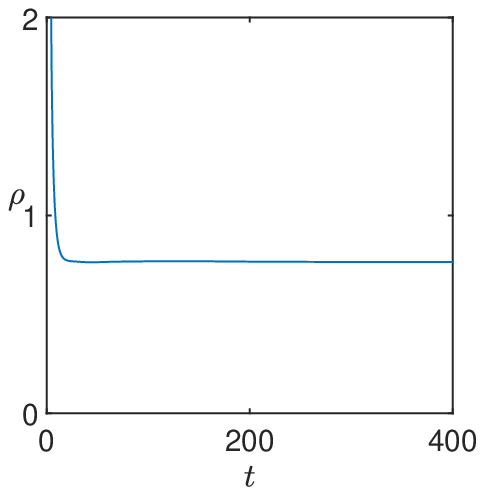}\\
(c) $\rho(t)$
\end{minipage}
\caption{The norm of position, velocity of the first agent and $\rho(t)$ when $\sigma_r=0.3$ and $\sigma_a=1$ }
\label{fig5}
\end{figure}

\begin{figure}[!ht]
\centering
\begin{minipage}{0.45\textwidth}
\centering
\includegraphics[width=0.9\textwidth]{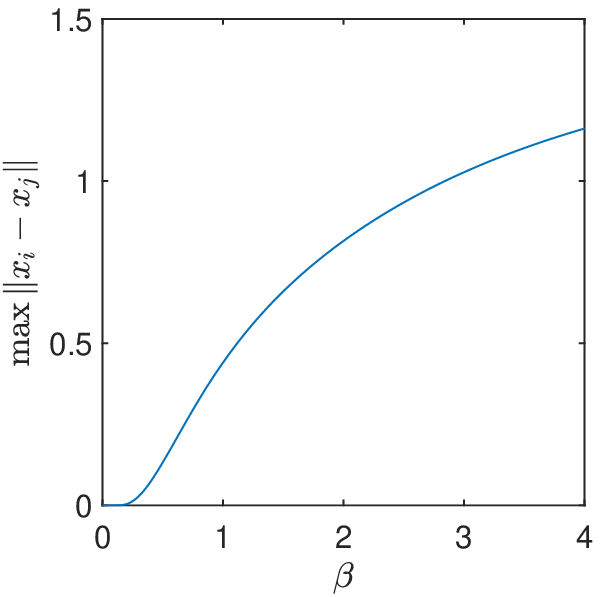}\\
(a) Maximum diameter for \eqref{main}
\vspace{1em}
\end{minipage}
\begin{minipage}{0.45\textwidth}
\centering
\includegraphics[width=0.9\textwidth]{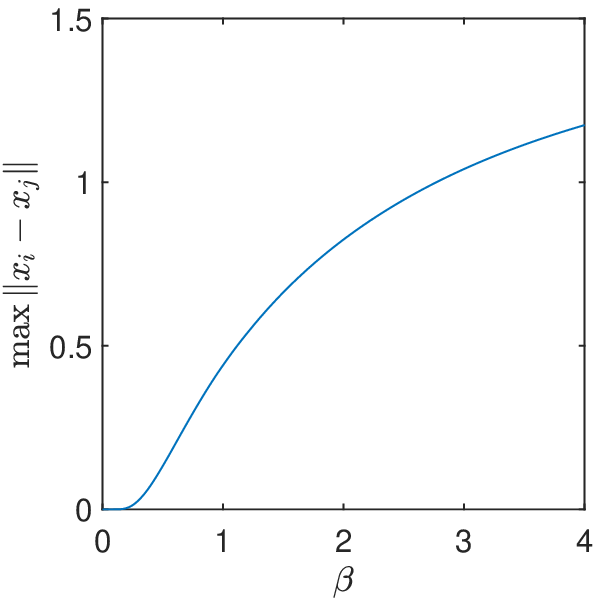}\\
(b) Maximum diameter for \eqref{eqb} with $b=0.2$
\end{minipage}
\caption{Maximum diameter of agents when $\beta\in [0,4]$ and $T=50$}
\label{fig6}
\end{figure}

%
%If $\sigma_a$ and $\sigma_r$ change in $[0,4]\times[0,4]$, then the maximum diameter of agents is given in Figure \ref{fig5}.
%From this, we can see the tendency.
%
%\begin{figure}[!ht]
%\centering
%\begin{minipage}{0.5\textwidth}
%\centering
%\includegraphics[width=\textwidth]{fig5.eps}
%\end{minipage}
%\caption{Maximum diameter of agents when $(\sigma_a,\sigma_r)\in [0,4]\times [0,4]$ and $T=100$}
%\label{fig5}
%\end{figure}

Next, we consider more general parameters:
\[\sigma(x)=\sigma_a-\frac{\sigma_r}{x^\beta}.\]
Then the exponent $\beta$ represents the effect of repulsive force.
If $\beta>1$, then the closer the agents are to each other, the stronger the force they repel against each other.
The maximum diameter of agents when $\sigma_a=5$ and $\sigma_r=0.5$ is given in Figure \ref{fig6} and hence we can check that the role of $\beta$.
In addition, in Figure \ref{fig7}, the patterns for the specific values $\beta=0, 0.5, 1, 1.25, 1.26, 1.5, 2, 5$ are displayed. For these cases,  we can find that each agent is arranged at the same  distance as the adjacent agents. It can be seen that their configuration pattern changes for $\beta$ between $1.25$ and $1.26$. See Figure \ref{fig7}(d) and (e).

\begin{figure}[!ht]
\centering
\begin{minipage}{0.243\textwidth}
\centering
\includegraphics[width=0.8\textwidth]{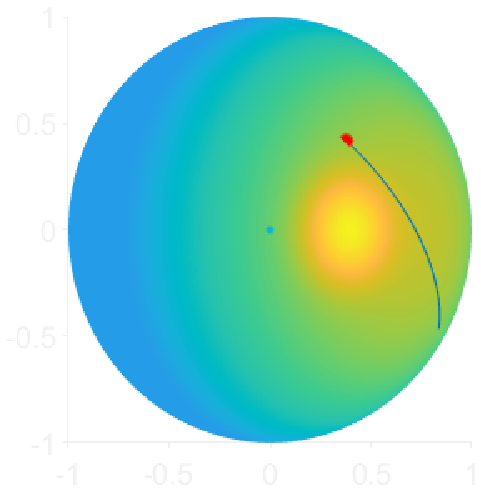}\\
(a) $\beta=0$
\end{minipage}
\begin{minipage}{0.243\textwidth}
\centering
\includegraphics[width=0.8\textwidth]{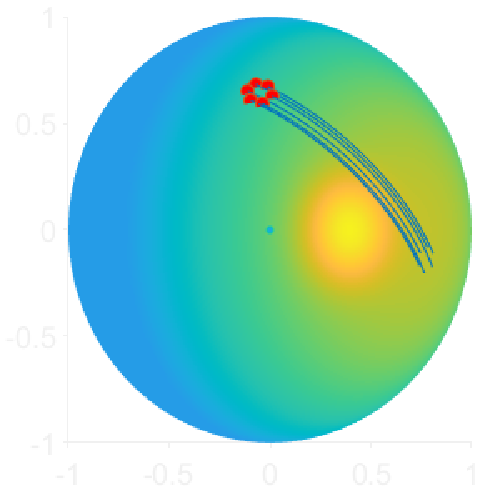}\\
(b) $\beta=0.5$
\end{minipage}
\begin{minipage}{0.243\textwidth}
\centering
\includegraphics[width=0.8\textwidth]{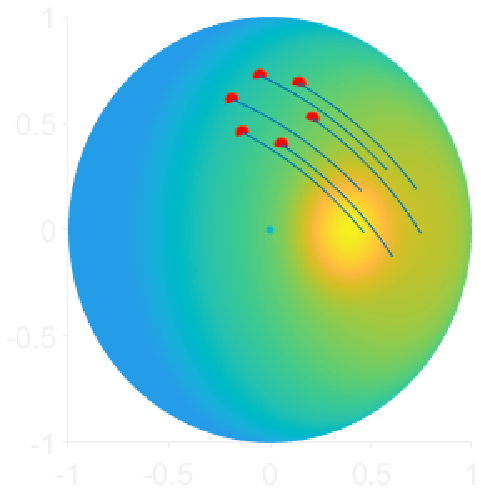}\\
(c) $\beta=1$
\end{minipage}
\begin{minipage}{0.243\textwidth}
\centering
\includegraphics[width=0.8\textwidth]{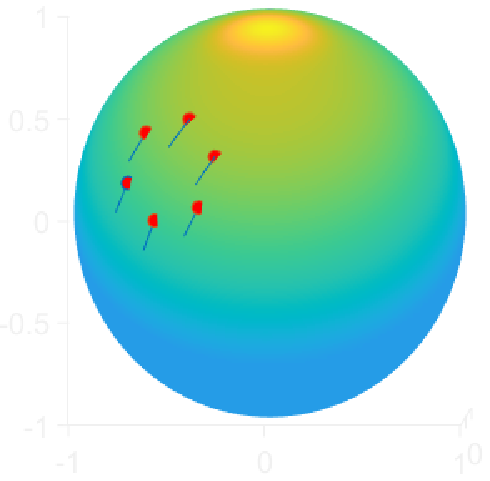}\\
(d) $\beta=1.25$
\end{minipage}
\vfill
\begin{minipage}{0.243\textwidth}
\centering
\includegraphics[width=0.8\textwidth]{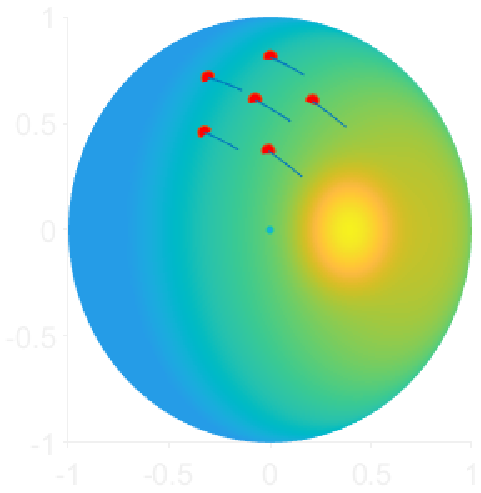}\\
(e) $\beta=1.26$
\end{minipage}
\begin{minipage}{0.243\textwidth}
\centering
\includegraphics[width=0.8\textwidth]{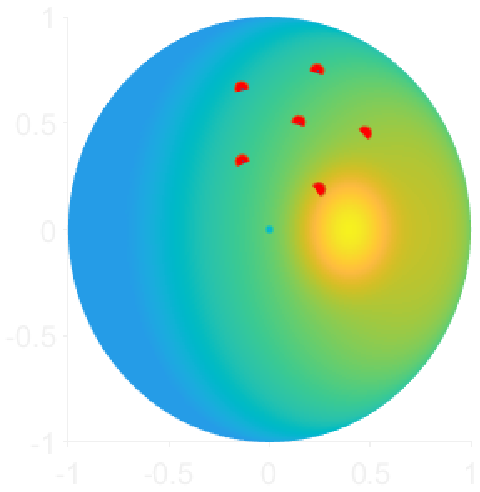}\\
(f) $\beta=1.5$
\end{minipage}
\begin{minipage}{0.243\textwidth}
\centering
\includegraphics[width=0.8\textwidth]{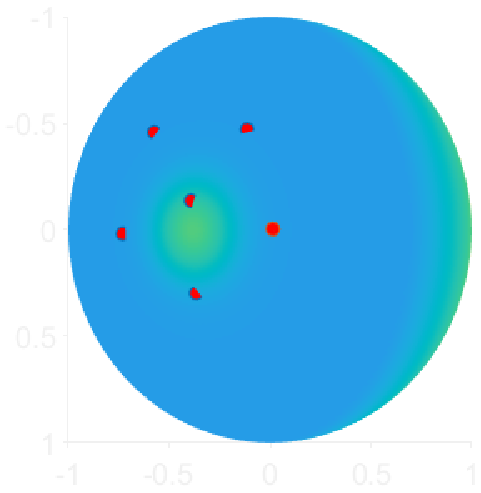}\\
(g) $\beta=2$
\end{minipage}
\begin{minipage}{0.243\textwidth}
\centering
\includegraphics[width=0.8\textwidth]{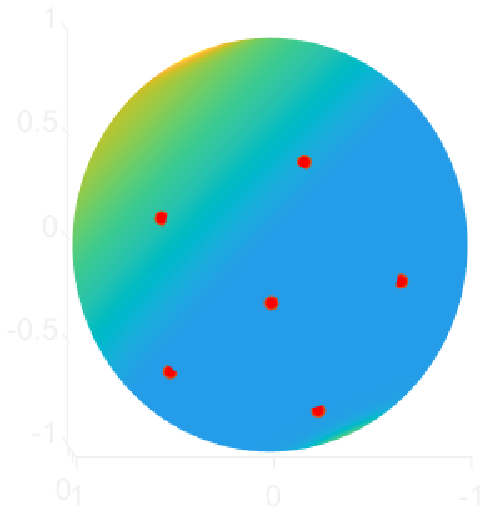}\\
(h) $\beta=5$
\end{minipage}
\caption{Patterns when $\sigma_a=5$ and $\sigma_r=0.5$}
\label{fig7}
\end{figure}

As we can see in the previous examples, the agents could stop depending on their energy. This happens in almost all cases except for a few  limited cases. To avoid this phenomenon, we add a boost term in the second equation of \eqref{main}. The flocking model on a sphere with boost term is given by
\begin{align}
\begin{aligned}\label{eqb}
\dot{x}_i&=v_i,\\
\dot{v}_i&=-\frac{\|v_i\|^2}{\|x_i\|^2}x_i+f_i^bv_i+\sum_{j=1}^N\frac{\psi_{ij}}{N}\big(R_{x_j \shortrightarrow x_i}(v_j)-v_i\big)+\sum_{j=1}^N \frac{\sigma_{ij} }{N}(\|x_i\|^2x_j - \langle x_i,x_j \rangle  x_i),
\end{aligned}
\end{align}
where $f_i^b$ is a boost parameter.
One example of the boost parameter $f_i^b$ is as follows:
\begin{equation}\label{booster}
 f_i^b=\begin{cases}
  \displaystyle
  -\frac{2}{b}\|v_i\|+2, & \text{if $\|v_i(t)\|\leq b$,}
 \\
  0,  & \text{otherwise,}
   \end{cases}
\end{equation}
where $b$ is a positive constant. We note that the boost term increases velocity when the speed falls below a value.

When we consider the boost term such as \eqref{booster}, the agents escape from a sphere after a certain large time because of the accumulation of computation error.
If we want to observe the dynamics of agents for a long time, an additional correction term should be added as in \cite{L-S}.
However, for this boosted case, we observe that the dynamics of the ensemble remains on the sphere. With the same parameter of Figure \ref{fig2}, the time evolution of agents when $b=0.2$ is given in Figure \ref{fig9}.
Comparing to Figure \ref{fig2}, we can check that the agents are moving at the nonzero constant speed without stopping.
See Figure \ref{fig10}.
The velocity is not zero and $\rho(t)$ is perturbed near \[\frac{(N-1)\sigma_r}{2\sigma_a}=1.25\]
 because the boost term pushes the agents steadily.
\begin{figure}[!ht]
\centering
\begin{minipage}{0.243\textwidth}
\centering
\includegraphics[width=\textwidth]{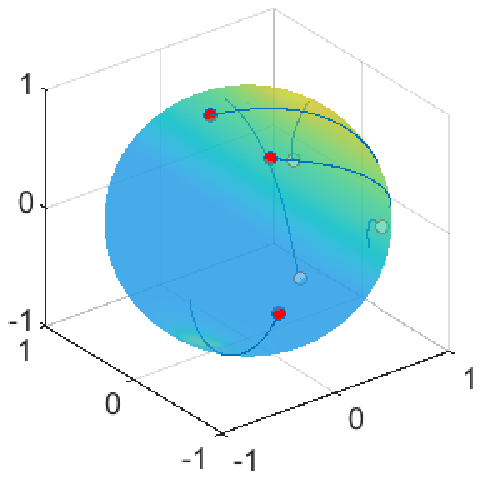}\\
(a) $t=1$
\end{minipage}
\begin{minipage}{0.243\textwidth}
\centering
\includegraphics[width=\textwidth]{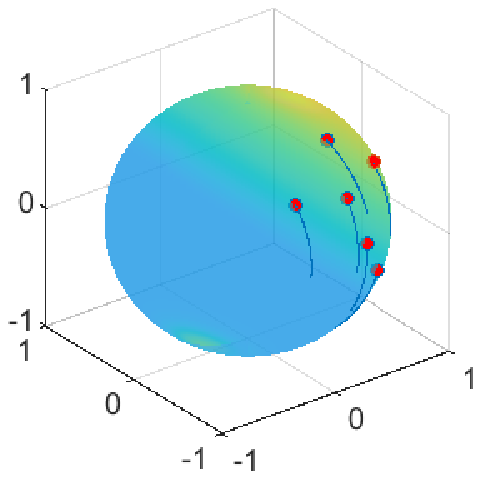}\\
(b) $t=30$
\end{minipage}
\begin{minipage}{0.243\textwidth}
\centering
\includegraphics[width=\textwidth]{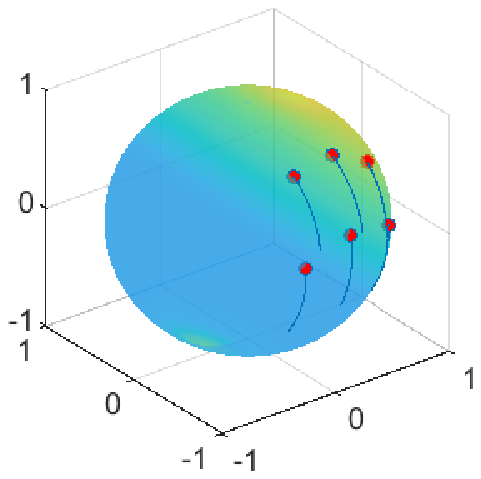}\\
(c) $t=60$
\end{minipage}
\begin{minipage}{0.243\textwidth}
\centering
\includegraphics[width=\textwidth]{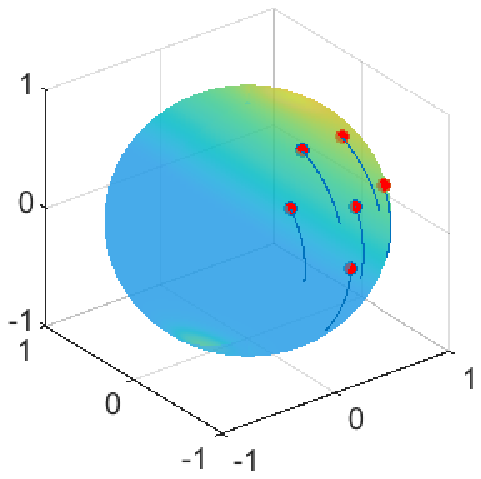}\\
(d) $t=90$
\end{minipage}
\caption{Time evolution of \eqref{eqb} when $\sigma_a=1$ and $\sigma_r=0.5$}
\label{fig9}
\end{figure}

\begin{figure}[!ht]
\centering
\begin{minipage}{0.3\textwidth}
\centering
\includegraphics[width=\textwidth]{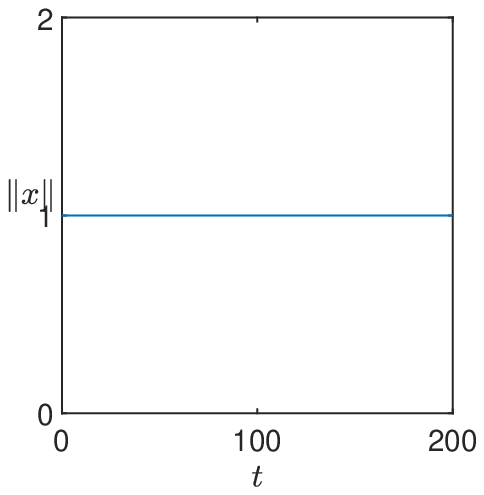}\\
(a) Norm of position $\|x_1\|$
\end{minipage}
\begin{minipage}{0.3\textwidth}
\centering
\includegraphics[width=\textwidth]{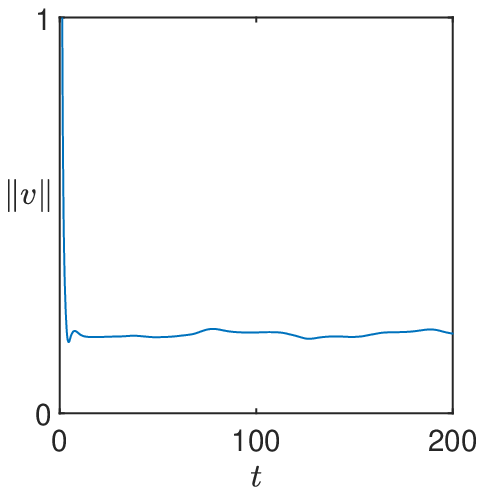}\\
(b) Norm of velocity $\|v_1\|$
\end{minipage}
\begin{minipage}{0.3\textwidth}
\centering
\includegraphics[width=\textwidth]{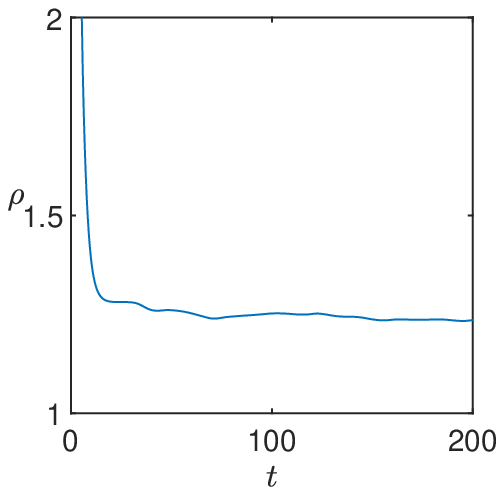}\\
(c) $\rho(t)$
\end{minipage}
\caption{The norm of position, velocity of the first agent and $\rho(t)$ for \eqref{eqb} with $b=0.2$ }
\label{fig10}
\end{figure}

In  \cite{L-S}, the authors consider the following second-order system of ODEs with  the cooperative control law on the sphere: for $1\le i\le N$,
\begin{align}
\begin{aligned}\label{L-S}
\frac{dx_i}{dt} &= v_i, \\
\frac{dv_i}{dt} &= -\|v_i\|^2x_i-k_vv_i+u_i+f_i^0,
\end{aligned}
\end{align}
where
\[u_i=\sum_{j\in \mathcal{N}_i}\omega_{ij}\left(k_a- \frac{k_r}{\|x_i-x_j\|^2}\right)(x_j - \langle x_i,x_j\rangle x_i)\quad \mbox{and} \quad f_i^0=-k_0\left(x_i-\frac{x_i}{\|x_i\|}\right).\]

\begin{figure}[ht!]
\centering
\begin{minipage}{0.243\textwidth}
\centering
\includegraphics[width=\textwidth]{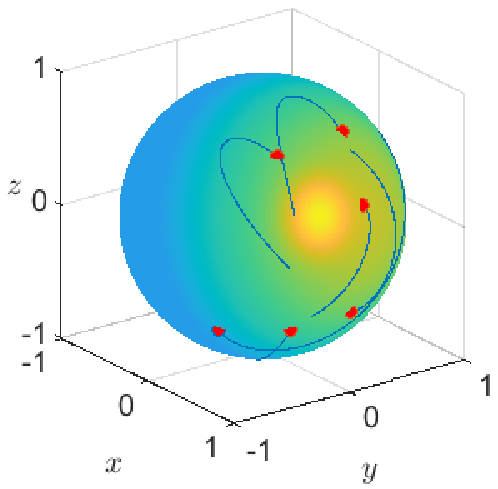}\\
(a) $t=0$
\end{minipage}
\begin{minipage}{0.243\textwidth}
\centering
\includegraphics[width=\textwidth]{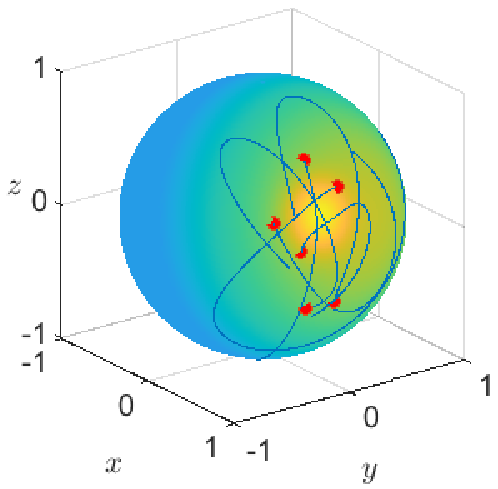}\\
(b) $t=1$
\end{minipage}
\begin{minipage}{0.243\textwidth}
\centering
\includegraphics[width=\textwidth]{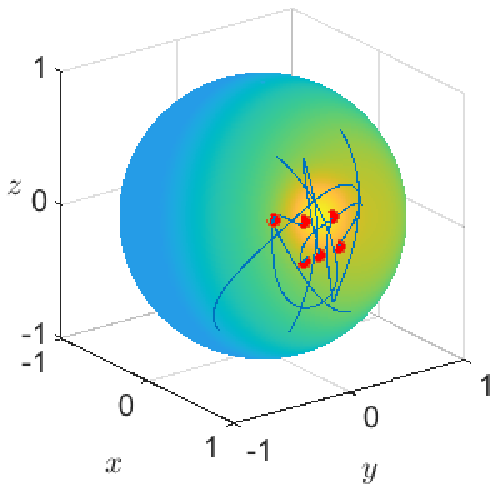}\\
(c) $t=2$
\end{minipage}
\begin{minipage}{0.243\textwidth}
\centering
\includegraphics[width=\textwidth]{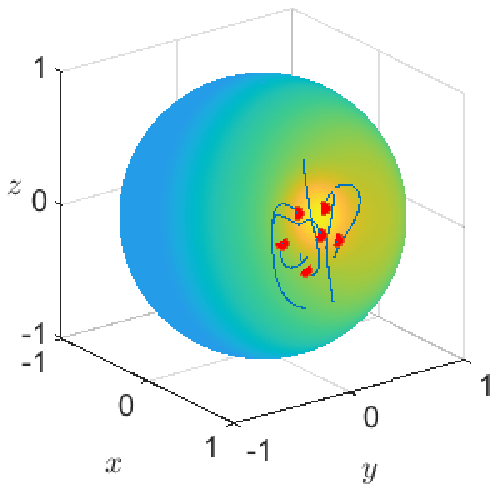}\\
(d) $t=3$
\end{minipage}
\vfill
\begin{minipage}{0.243\textwidth}
\centering
\includegraphics[width=\textwidth]{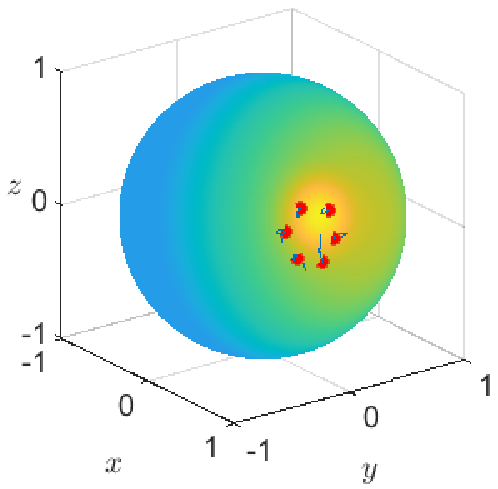}\\
(e) $t=5$
\end{minipage}
\begin{minipage}{0.243\textwidth}
\centering
\includegraphics[width=\textwidth]{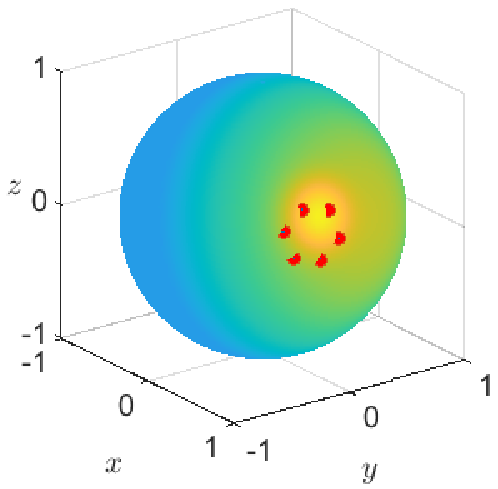}\\
(f) $t=8$
\end{minipage}
\begin{minipage}{0.243\textwidth}
\centering
\includegraphics[width=\textwidth]{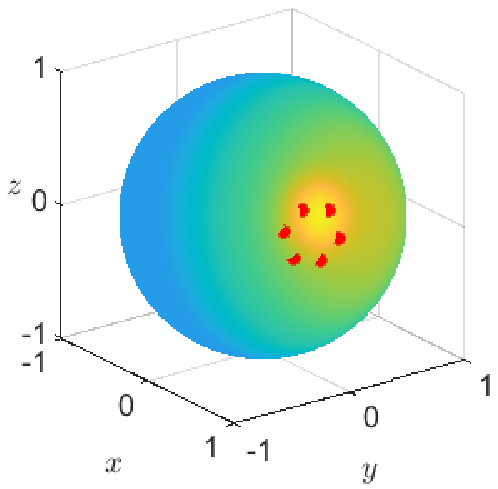}\\
(g) $t=10$
\end{minipage}
\begin{minipage}{0.243\textwidth}
\centering
\includegraphics[width=\textwidth]{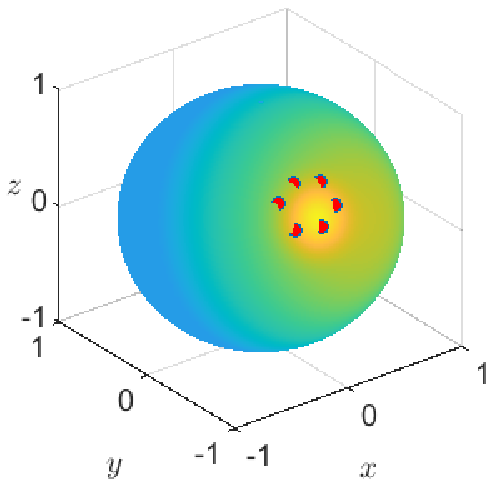}\\
(h) $t=200$
\end{minipage}
\caption{Time evolution of \eqref{L-S}}
\label{fig0}
\end{figure}

\begin{figure}[ht!]
\centering
\begin{minipage}{0.243\textwidth}
\centering
\includegraphics[width=\textwidth]{fig0a.eps}\\
(a) $t=0$
\end{minipage}
\begin{minipage}{0.243\textwidth}
\centering
\includegraphics[width=\textwidth]{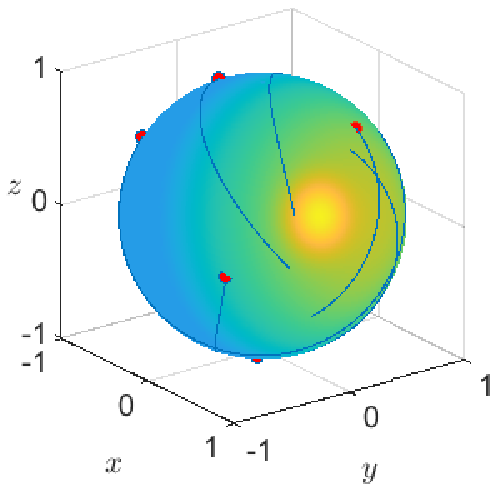}\\
(b) $t=1$
\end{minipage}
\begin{minipage}{0.243\textwidth}
\centering
\includegraphics[width=\textwidth]{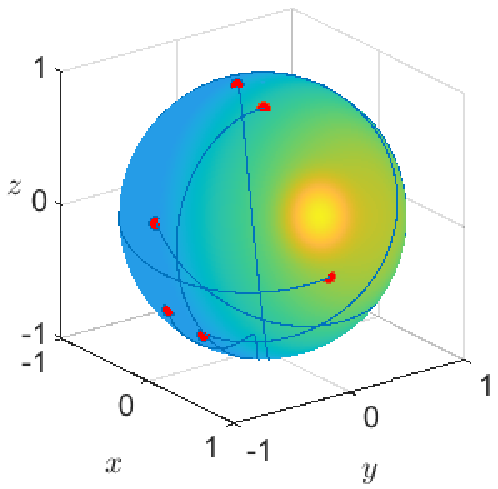}\\
(c) $t=2$
\end{minipage}
\begin{minipage}{0.243\textwidth}
\centering
\includegraphics[width=\textwidth]{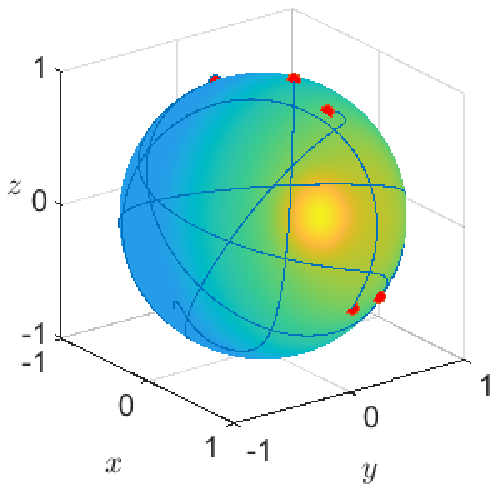}\\
(d) $t=3$
\end{minipage}
\vfill
\begin{minipage}{0.243\textwidth}
\centering
\includegraphics[width=\textwidth]{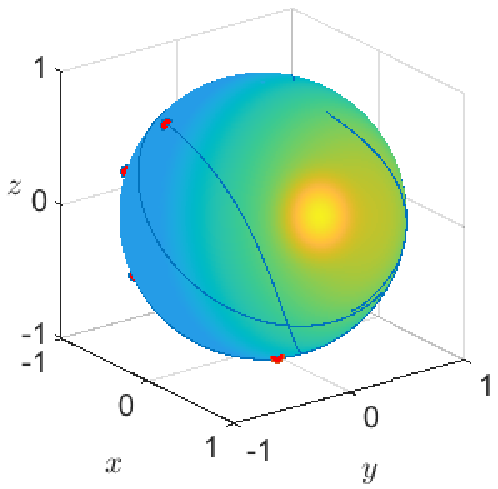}\\
(e) $t=5$
\end{minipage}
\begin{minipage}{0.243\textwidth}
\centering
\includegraphics[width=\textwidth]{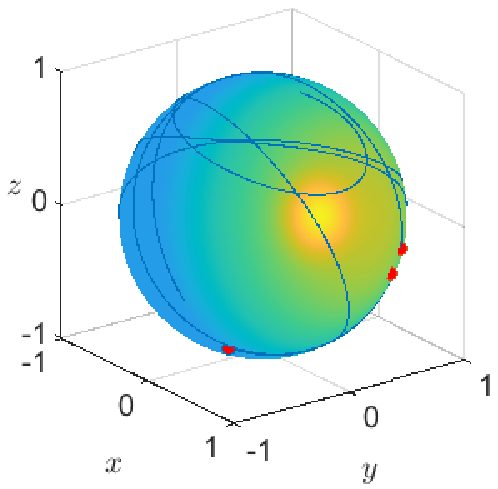}\\
(f) $t=8$
\end{minipage}
\begin{minipage}{0.243\textwidth}
\centering
\includegraphics[width=\textwidth]{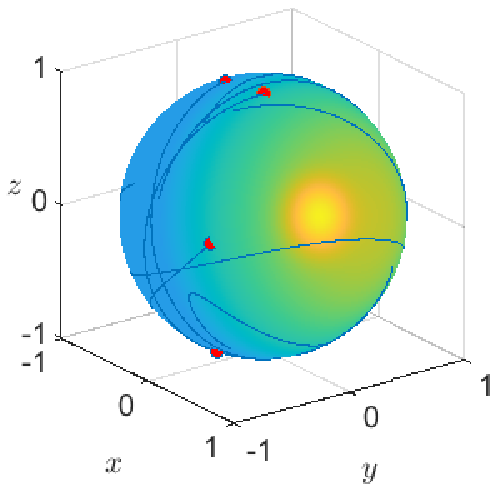}\\
(g) $t=10$
\end{minipage}
\begin{minipage}{0.243\textwidth}
\centering
\includegraphics[width=\textwidth]{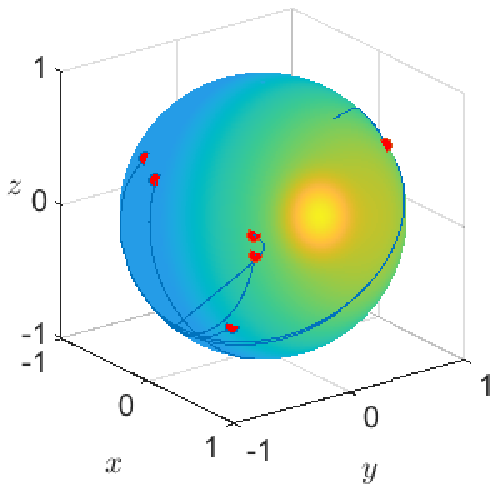}\\
(h) $t=200$
\end{minipage}
\caption{Time evolution of \eqref{L-S}when $k_v=0$ and $f_i^b$ is equipped}
\label{fig11}
\end{figure}
Here, agents are controlled by the control law $u_i$ and stayed on the sphere by feedback term $f_i^0$.
The parameters $k_v$, $k_a$, and $k_r$ are a damping, attraction, and repulsion constant, respectively. Due to the damping term $k_vv_i$, the ensemble of all agents is stabilized and one can obtain the stationary formation on the sphere.
Therefore, this model is appropriate for the case when the target area is fixed.
See Figure \ref{fig0}.
This figure is the time evolution of \eqref{L-S} with parameters $(N,k_a,k_r,k_v,k_0)=(6,1,0.1,7,10^4)$ and initial configuration is randomly chosen satisfying $\|x_i(0)\|=1$ and $\langle x_i(0),v_i(0)\rangle=0$  for all index $i\in \{1,\ldots,N\}$.
Note that if there is no damping term, that is, $k_v=0$, then the system \eqref{L-S} yields chaotically moving agents.
If the boost term $f_i^b$ is added in the absence of damping, agents move more chaotically.
See Figure \ref{fig11}. However, for the case of  the system of \eqref{main}, even if there is no damping term or there is a boost term, we can obtain a moving formation due to the flocking operator in the model \eqref{main}. Therefore, it is suitable to apply this model for detection or surveillance problems where the target area is not decided or the target area is relatively large.

%
%\begin{table}[h!]
%  \begin{center}
%    \caption{State of agents when $f_i^b$ is equipped }
%    \label{tab:table1}
%    \begin{tabular}{c|c|c} % <-- Alignments: 1st column left, 2nd middle and 3rd right, with vertical lines in between
%      $k_v=0$ & $k_v=7$ & $k_v=1$\\
%      \hline
%      \begin{minipage}{0.243\textwidth}
%\centering
%\includegraphics[width=\textwidth]{figt1a.eps}
%\end{minipage} &
%\begin{minipage}{0.243\textwidth}
%\centering
%\includegraphics[width=\textwidth]{figt1b.eps}
%\end{minipage}&
% \begin{minipage}{0.243\textwidth}
%\centering
%\includegraphics[width=\textwidth]{figt1c.eps}
%\end{minipage} \\
%\hline
%    \end{tabular}
%  \end{center}
%\end{table}

\section{Conclusion}\label{sec5}\setcounter{equation}{0}

In this paper, we studied a spherical flocking model with attractive and repulsive forces. For any admissible initial conditions,  we demonstrate the velocity alignment as well as the global well-posedness of the model in Theorem \ref{thm:1}. In Theorem \ref{thm:2}, we classify all possible asymptotic configurations for this model: rendezvous, formation configuration, and uniform deployment. In particular, our model maintains the desired pattern of a formation flight with nonzero constant speed. We observe that this nonzero speed formation cannot be achieved without the flocking operator, which acts as a stabilizer. Our analytic results were supported by numerical simulations.

\bibliographystyle{amsplain}

\end{document}